\newtheorem{theorem}{Theorem}[section]
\newtheorem{lemma}[theorem]{Lemma}
\theoremstyle{definition}
\newtheorem{definition}[theorem]{Definition}
\newtheorem{prop}[theorem]{Proposition}
\newtheorem{cor}[theorem]{Corollary}
\newtheorem{ass}{Assumption}
\newtheorem{prob}{Problem}
\theoremstyle{remark}
\newtheorem{remark}[theorem]{Remark}
\numberwithin{equation}{section}
\newcommand{\dx}{\;\mathrm{d}x}
\newcommand{\dxi}{\;\mathrm{d}\xi}
\newcommand{\dy}{\;\mathrm{d}y}
\newcommand{\dt}{\;\mathrm{d}t}
\newcommand{\ds}{\;\mathrm{d}s}
\newcommand{\dz}{\;\mathrm{d}z}
\newcommand{\R}{\mathbb{R}}
\newcommand{\N}{\mathbb{N}}
\newcommand{\Msym}{M_{\rm sym}}
\def\changeKen{\marginpar{\textcolor{teal}{changed by Kensuke}}
\textcolor{teal}}
\def\changeMar{\marginpar{\textcolor{orange}{changed by Marius}}
\textcolor{orange}}
\def\changeAnna{\marginpar{\textcolor{blue}{changed by Anna}}\textcolor{blue}}
\begin{document}

\title[An obstacle problem for the $p$-elastic energy]{An obstacle problem for the $p-$elastic energy}

\author{Anna Dall'Acqua}
\address{Universität Ulm, Institut für angewandte Analysis, 89081 Ulm}
\email{anna.dallacqua@uni-ulm.de}
\author{Marius Müller}
\address{Universität Augsburg, Institut für Mathematik, 86159 Augsburg}

\email{marius1.mueller@uni-a.de}
\author{Shinya Okabe}
\address{Mathematical Institute, Tohoku University, Sendai, 980-8578, Japan}
\email{shinya.okabe@tohoku.ac.jp}
\author{Kensuke Yoshizawa}
\address{Mathematical Institute, Tohoku University, Aoba, Sendai 980-8578, Japan; present address: Faculty of Education, Nagasaki University, 1-14 Bunkyo-machi, Nagasaki, 852-8521, Japan}
\email{k-yoshizaw@nagasaki-u.ac.jp
}
\subjclass[2020]{Primary: 49J40, 53A04, 
Secondary: 35J70, 49N60, 49Q20}

\date{\today}

\keywords{obstacle problem; the $p$-elastic energy; degeneracy;  
optimal regularity; polar tangential angle.}

\begin{abstract}
In this paper we consider an obstacle problem for 
a generalization of the $p$-elastic energy among graphical curves with fixed ends. Taking into account that the Euler--Lagrange equation has a degeneracy, 
we address the question whether solutions have a flat part, i.e. an open interval where the curvature vanishes.
We also investigate which is the main cause of the loss of regularity, the obstacle or the degeneracy. Moreover, we give several conditions on the obstacle  that assure existence and nonexistence of solutions. The analysis can be refined in the special case of the $p$-elastica functional, where we obtain sharp existence results and uniqueness for symmetric minimizers.
\end{abstract}
\maketitle

\tableofcontents

\section{Introduction} \label{section:intro}
This paper is concerned with an obstacle problem for the $p$-elastic energy defined by 
\begin{equation} \label{eq:p-elastic_energy}
\mathcal{E}_p(u) := \int^1_0 |\kappa_u|^p \sqrt{1+(u')^2} \dx, \quad p>1, 
\end{equation}
where $\kappa_u$ denotes the curvature of ${\rm graph}(u):=\{ (x, u(x)) \, \colon \, x \in [0,1]\}$. 
More precisely, we consider the variational problem 
\begin{equation} \label{obstacle_problem:1.1}
\inf_{v \in M(\psi)} \mathcal{E}_p(v) 
\end{equation}
with 
\begin{equation} \label{eq:admset}
M(\psi) := \{ v \in W^{2,p}(0,1) \cap W^{1,p}_0(0,1) \mid v \ge \psi \mbox{ in } [0, 1] \}. 
\end{equation}
Here, the function $\psi$ denotes the obstacle and satisfies the following assumption.
\begin{ass}[Assumption on the obstacle $\psi$] \label{ass:psi}
We require that $\psi \in C^0([0,1])$ with $\psi(0), \psi(1)< 0$ and  
that there exists some $x_0 \in (0,1)$ such that $\psi(x_0)> 0$. 
\end{ass}

Obstacle problems have been attracting attention from a lot of researchers. 
Moreover, the $p$-elastica functional has recently raised a lot of interest, see e.g.\ \cite{AGM,Watanabe,SW20,LP22,TatsuyaKensuke1,TatsuyaKensuke2,TatsuyaKensuke3,TatsuyaKensuke4} for the study of critical points and see e.g.\ \cite{NP20, OPW20, BHV, Poz22, OW23} for the study of gradient flows.
In this article we  study
the regularity of minimizers. In cases where the absence of obstacle implies smoothness of minimizers, i.e., any solution of the Euler--Lagrange equation for the corresponding energy functional is smooth,
one can generally observe
that the presence of obstacle induces a loss of regularity of minimizers. 
The collapse of the regularity occurs at the coincidence set between minimizers and obstacle. 
On the other hand, if the Euler--Lagrange equation for the corresponding energy has a degeneracy, it is significant to ask the following mathematical question: 
\begin{prob} \label{prob:1.1}
Which is the main cause of the loss of regularity of minimizers, the presence of obstacle or the degeneracy of the Euler--Lagrange equation?
\end{prob}

The critical points of the elastic energy 
\[
\mathcal{E}_2(\gamma) = \int_\gamma \kappa(s)^2 \ds
\]
with the length constraint are called elastica and have been studied well in the mathematical literature.
Here, $\gamma$~is a planar curve, and $\kappa$ and $s$ denote the curvature and the arc length parameter of~$\gamma$, respectively.  
Since the curvature of any critical point of $\mathcal{E}_2$ with no constraint satisfies the Euler--Lagrange equation 
\[
2 \kappa_{ss} + \kappa^3 = 0,  
\]
it is well known that any critical point of $\mathcal{E}_2$ is analytic, cf. \cite{LangerSinger}. 
On the other hand, the Euler--Lagrange equation for the $p$-elastic energy 
\[
\mathcal{E}_p(\gamma) = \int_\gamma |\kappa|^p \ds
\]
is formally written as 
\begin{equation} \label{EL-eq:1.2}
p (|\kappa|^{p-2} \kappa)_{ss} + (p-1) |\kappa|^p \kappa = 0.  
\end{equation}
The degeneracy of the Euler--Lagrange equation \eqref{EL-eq:1.2} induces a loss of regularity of critical points of $\mathcal{E}_p$. 
One of the reasons for the loss of regularity is the emergence of {\it flat core solutions}, which are critical points of $\mathcal{E}_p$ 
and contain flat parts, i.e., parts with~$\kappa \equiv 0$ (see \cite{Watanabe,TatsuyaKensuke1}). 
In \cite{Watanabe}, it was proved that the flat core solutions do not belong to~$C^3$. 
Recently, a complete classification of solutions of \eqref{EL-eq:1.2} has been obtained, as well as the optimal regularity (see \cite{TatsuyaKensuke1}). 

One of the purposes of this paper is to give an answer to Problem \ref{prob:1.1} for the obstacle problem \eqref{obstacle_problem:1.1}.
Following this motivation, we consider the more general obstacle problem
\begin{equation} \label{obstacle_problem:general}
\inf_{v \in M(\psi)} \mathcal{E}(v)
\end{equation}
with 
\begin{equation} \label{def-E-intro}
    \mathcal{E}(u)  := \int_0^1 |G(u'(x))'|^p \dx, 
\end{equation}
called  the \emph{generalized p-elastica functional}, where $\psi$ and $G$ satisfy Assumptions \ref{ass:psi} and \ref{ass:G}, respectively: 
\begin{ass}[Assumptions on $G$]\label{ass:G}
We assume that $G \in C^\infty( \mathbb{R})$. For the derivatives of $G$ we use the notation $\dot{G}, \Ddot{G},\ldots$. We require 
\begin{enumerate}
    \item $G$ is \emph{odd}, i.e. $G(-s) = - G(s)$,
    \item $\dot{G} > 0$, $(s \mapsto s \dot{G}(s)) \in L^1(\mathbb{R})$.
\end{enumerate}
\end{ass}
\noindent
The $p$-elastic energy \eqref{eq:p-elastic_energy} can be recovered when considering
\begin{equation}\label{eq:EU_p}
G(z) := EU_p(z) := \int_0^z \frac{1}{(1+s^2)^{\frac{3}{2}- \frac{1}{2p}}} \ds. 
\end{equation}
\if0
{\color{blue} Cut?? Indeed, 
\[
\int_0^1 |EU_p(u')'|^p  \dx = \int_0^1 \biggl| \frac{u''}{(1+(u')^2)^{\frac{3}{2}-\frac{1}{2p}}} \biggr|^p \dx = \int_0^1 |\kappa_u|^p \sqrt{1+ (u')^2} \dx, 
\]
where $\kappa_u = u''/(1+(u')^2)^\frac{3}{2}$ is the scalar curvature of $\mathrm{graph}(u)$.} 
\fi
In particular the case $p= 2$ and $G = EU_2$ resembles the classical elastic energy of $\mathrm{graph}(u)$. Generalized functionals like in \eqref{def-E-intro} have applications in image restoration, cf. \cite{DalMaso}.

To study the optimal regularity of minimizers of problem \eqref{obstacle_problem:general}, we introduce a notion of {\em critical points} of problem \eqref{obstacle_problem:general}.
Inspired by the work \cite[p.\ 248]{Euler} of Euler  on the elastic energy, we consider first the following substitution. 
\begin{definition}\label{def:w}
Let $u \in W^{2,p}(0,1)$. Then the quantity $w \in L^\frac{p}{p-1}(0,1)$ given by 
\begin{equation} \label{gEs}
    w(x) := w_u(x) := -p\dot{G}(u'(x))^{p-1} |u''(x)|^{p-2} u''(x) \quad a.e. \; x \in (0,1)
\end{equation}
is called \emph{Euler's substitution}.
\end{definition}
Then critical points for problem \eqref{obstacle_problem:general} are defined as follows: 
\begin{definition} \label{def:critical}
We say that $u \in M(\psi)$ is a {\em critical point} of $\mathcal{E}$ with obstacle constraint if there exists a nonnegative Radon measure $\mu$ supported on $\{ u = \psi \}$ such that Euler's substitution $w=w_u$ satisfies 
\begin{equation}\label{eq:solELeq}
-\int^1_0 w(x) \dfrac{d}{dx}\Bigl( \dot{G}(u'(x)) \phi'(x) \Bigr) \dx = \int^1_0 \phi(x) \, {\rm d}\mu(x) 
\end{equation} 
for all $\phi \in W^{2,p}(0,1) \cap W^{1,p}_0(0,1)$. 
\end{definition}
Our first main result Theorem \ref{Theorem:1} is concerned with a qualitative property and the optimal regularity of critical points.
\begin{theorem} \label{Theorem:1}
Let $p \in (1,\infty)$ and $u \in M(\psi)$ be a critical point in the sense of Definition~{\rm \ref{def:critical}}. 
Then $u \in C^2([0,1])$ and satisfies 
\begin{equation} \label{q-property}
u''(x) < 0 \quad \text{for all} \quad x \in (0,1), \quad \text{and} \quad u''(0)=u''(1)=0.  
\end{equation}
In particular, $u$ is concave. Moreover, the following hold$\colon$
\begin{enumerate}
\item[{\rm (i)}] If $1 < p \le 2$, then 
\[
u \in W^{3,\infty}(0,1), \ u'''\in BV(0,1) \mbox{ but, in general, } u \not \in C^3(0,1). 
\]
\item[{\rm (ii)}] If $p > 2$, then 
\[
u \in W^{3,q}(0,1) \quad \text{for all} \quad q \in \bigl[1, \tfrac{p-1}{p-2} \bigr) \quad \text{and} \quad u \not\in W^{3, \tfrac{p-1}{p-2}}(0,1). 
\]
\end{enumerate}
\end{theorem}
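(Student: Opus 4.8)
The plan is to exploit the weak equation \eqref{eq:solELeq} together with Euler's substitution to set up an ODE for $w$, and then transfer regularity back to $u$. First I would observe that since $\mu$ is supported on $\{u = \psi\}$ and $\psi$ is positive somewhere but negative at the endpoints, the coincidence set is a compact subset of $(0,1)$; in particular $\mu$ vanishes near $x=0$ and $x=1$. Testing \eqref{eq:solELeq} with suitable $\phi$ shows that $x \mapsto \dot G(u')\, \tfrac{d}{dx}\!\left( \dot G(u') \phi' \right)$ is being paired against $w$, and by a change of variable $t = g(x)$ with $g' = \dot G(u')>0$ one can rewrite the left-hand side so that $w$, viewed in the new variable, is a function whose second distributional derivative is a measure. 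Concretely I expect $W(t) := w(x(t))$ to satisfy $-W'' = (\text{something involving }\mu)$ as a measure, hence $W \in W^{1,\infty}$ and $W' \in BV$; away from $\supp\mu$, $W$ is affine, so $W$ is piecewise linear in the obstacle-free region and globally Lipschitz. Translating back, $w = w_u$ is Lipschitz on $[0,1]$ and $w' \in BV$.

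Next I would extract the pointwise sign and boundary behavior. Since $w_u = -p\dot G(u')^{p-1}|u''|^{p-2}u''$ and $\dot G>0$, the sign of $w$ is opposite to that of $u''$. The boundary conditions $u(0)=u(1)=0$ together with the constraint $u\ge\psi$ and $\psi(0),\psi(1)<0$ force, via the variational inequality near the endpoints (where $\mu=0$), that $u$ is a genuine critical point of the unconstrained functional there; combined with a convexity/maximum-principle argument for the ODE satisfied by $w$ in $(0,1)$ one deduces $w$ does not change sign, and then that $u''<0$ on $(0,1)$ with $u''(0)=u''(1)=0$. This already gives $u\in C^2$ once we know $w\in C^0$ and invert the algebraic relation \eqref{gEs}: since $t\mapsto |t|^{p-2}t$ is a homeomorphism of $\R$, we can solve for $u''$ continuously in terms of $w$ and $u'$, giving $u\in C^2([0,1])$ and the concavity statement \eqref{q-property}.

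The heart of the matter — and the main obstacle — is the \emph{optimal} regularity in (i) and (ii), which comes precisely from inverting \eqref{gEs} near the points where $u''=0$ (i.e. $w=0$), namely the endpoints and any point where the flat part meets a curved part. From $w$ Lipschitz we get $|u''| \sim |w|^{1/(p-1)}$ modulo smooth positive factors, so $u'' $ behaves like $|w|^{1/(p-1)}\,\mathrm{sgn}(-w)$. If $w$ vanishes to first order (which the BV-second-derivative structure guarantees generically, e.g. at an endpoint where $w'\neq 0$), then near such a point $u''(x) \sim c\,|x-x_*|^{1/(p-1)}$. Differentiating, $u'''(x) \sim |x-x_*|^{1/(p-1)-1} = |x-x_*|^{(2-p)/(p-1)}$. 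For $1<p\le 2$ the exponent is $\ge 0$, so $u'''$ is bounded, i.e. $u\in W^{3,\infty}$, and one checks $u'''\in BV$ from $w'\in BV$; the failure of $C^3$ is exhibited by an explicit flat-core example as in \cite{Watanabe}, where $u'''$ has a jump. For $p>2$ the exponent is negative, $(2-p)/(p-1)\in(-1,0)$, so $u'''\in L^q$ exactly when $q(p-2)/(p-1)<1$, i.e. $q<(p-1)/(p-2)$, and membership fails at the endpoint exponent — again witnessed by a flat-core type solution. Thus the delicate points are: (a) proving $w$ genuinely vanishes only to first order at the relevant points (so the upper bounds on regularity are attained and not beaten), which requires analyzing the ODE $-W''=$ measure and ruling out higher-order vanishing using that $\mu\ge 0$ is supported away from these points; and (b) constructing the explicit borderline example showing sharpness, presumably the flat-core solution on a subinterval glued to the obstacle. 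I would handle (a) by a careful ODE/Hopf-lemma argument on $W$ and (b) by direct computation with the one-dimensional profile solving $p(|\kappa|^{p-2}\kappa)_{ss}+(p-1)|\kappa|^p\kappa=0$.
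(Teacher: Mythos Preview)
Your overall strategy --- analyze Euler's substitution $w$, show it is Lipschitz with $w'\in BV$, then invert the relation $w = -p\dot G(u')^{p-1}|u''|^{p-2}u''$ --- is indeed the paper's approach. However, there are two substantive gaps in how you handle the sharpness of the regularity, and a minor technical slip.

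\textbf{Minor point.} The change of variable $t=g(x)$ with $g'=\dot G(u')$ does not transform the equation into $-W''=\text{measure}$: the coefficient $a=\dot G(u')$ does not cancel. The paper instead integrates by parts and uses Fubini to obtain directly $a(x)w'(x)=m(x)$ a.e., where $m(x)=\mu((x,1))+C$ is bounded, nonincreasing, and locally constant on $\{u>\psi\}$. This formula is the workhorse for everything that follows.

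\textbf{First gap: nondegeneracy and flat cores.} You allow for ``the flat part meets a curved part'' and later invoke ``an explicit flat-core example as in \cite{Watanabe}''. This is a misunderstanding of the geometry. A key step in the paper (Proposition~\ref{prop:deg}) is to prove that critical points of the obstacle problem have \emph{no} interior flat part: $\{u''=0\}=\{0,1\}$ exactly. The argument uses the monotonicity of $m$ together with $w(0)=w(1)=0$ to force $w>0$ on $(0,1)$; if $w$ vanished on some $[0,\alpha]$ with $\alpha>0$, then $m\equiv 0$ there, hence $m\le 0$ everywhere by monotonicity, hence $w$ nonincreasing, hence $w\equiv 0$, contradicting $u\ge\psi$ with $\psi$ somewhere positive. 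Watanabe's flat-core solutions arise for the \emph{fixed-length} problem and are irrelevant here.

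\textbf{Second gap: the sharpness mechanisms are different from what you describe.} For $p>2$ the statement $u\notin W^{3,(p-1)/(p-2)}$ is asserted for \emph{every} critical point, not ``witnessed by'' an example. The proof uses that near $x=0$ one has $m\equiv B_0$ constant (since $\{u=\psi\}$ avoids a neighborhood of $0$), and $B_0\neq 0$ (otherwise $w'\equiv 0$ near $0$, so $w\equiv 0$ there, contradicting nondegeneracy). Hence $cx\le w(x)\le Cx$ near $0$, so $|u'''|\sim w^{(2-p)/(p-1)}\sim x^{(2-p)/(p-1)}$, which lies in $L^q$ iff $q<\tfrac{p-1}{p-2}$. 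For $1<p\le 2$ the non-$C^3$ claim is ``in general'' and the paper exhibits it with \emph{cone obstacles}: there $\{u=\psi\}=\{\theta\}$, so $m$ is piecewise constant with a single jump at $\theta$, forcing $w'$ (hence $u'''$) to jump. If instead $u\in C^3$, then $aw'$ would be continuous, $m$ constant, $w$ monotone, and $w(0)=w(1)=0$ would give $w\equiv 0$, a contradiction. No flat core enters.
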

One of the novelties of this paper is the generalized Euler's substitution~\eqref{gEs} for the Euler--Lagrange equation. 
For each critical point $u$, the substitution allows us to apply the maximum principle to the (second order) elliptic PDE satisfied by $w_u$ and, in turn, to prove several qualitative properties as well as the optimal regularity of $w_u$.  
We prove Theorem~\ref{Theorem:1} by translating the qualitative properties and the optimal regularity on $w_u$ into those of $u$.  

We prove that any minimizer of problem \eqref{obstacle_problem:general} is a critical point in the sense of Definition~\ref{def:critical}. 
Thus any minimizer of problem~\eqref{obstacle_problem:general} satisfies the property~\eqref{q-property} which implies that any minimizer of problem~\eqref{obstacle_problem:general} 
is concave and has no flat core part. 
Then we observe that the collapse of regularity of minimizers may occur only at the boundary and the coincidence set. 
Moreover, we infer from the optimal regularity~(i) and~(ii) in Theorem~\ref{Theorem:1} that for $p \in (2, \infty)$ the degeneracy of the Euler--Lagrange equation 
plays a more important role on the loss of regularity, whereas for $p \in (1, 2]$ it is the effect of obstacle constraint. 

\begin{remark}
On the existence of minimizers of problem~\eqref{obstacle_problem:general}, we prove the existence for small obstacles and non-existence for large obstacles 
(see Theorem~\ref{thm:exinotopti} and Proposition~\ref{Lemma:non-existence}, respectively). 
Moreover, with the aid of a nonlinear Talenti inequality, 
we prove existence of a symmetric minimizer 
for suitably small symmetric obstacles $\psi$ (see Proposition \ref{prop:simsmall} and Corollary \ref{Cor:symmetry_minimizer}). 
\end{remark}

The second purpose of this paper is to give the complete classification of solvability of problem \eqref{obstacle_problem:1.1} in terms of the `size' of obstacle $\psi$, where $\psi$ denotes a specified obstacle, the so-called cone obstacle. 
Recently, obstacle problems for the elastic energy have been attracting attention and have been studied well in the mathematical literature, e.g., 
problem \eqref{obstacle_problem:1.1} with $p=2$ (\cites{Anna,Moist,MariusExistence,Y2021,Novaga}), the obstacle problem for $\mathcal{E}_2$ with the clamped boundary condition (\cite{GrunauOkabe}), 
and gradient flows corresponding to problem \eqref{obstacle_problem:1.1} with $p=2$ (\cites{Marius_2020,MariusFlow,OY}).  
Following our second purpose, we focus on the case of symmetric cone obstacle, 
given by functions $\psi \in C^0([0,1])$ such that $\psi|_{[0, 1/2]}$ and $\psi|_{[1/2,1]}$ are affine linear, and $\psi(x)=\psi(1-x)$ for $x \in [0,1]$. 
More precisely, we consider the obstacle problem 
\begin{equation} \label{obstacle_problem:1.2}
\inf_{v \in M_{\rm sym}(\psi)} \mathcal{E}_p(v) 
\end{equation}
with 
\begin{equation}\label{eq:msym}
M_{\rm sym}(\psi) := \{ v \in M(\psi) \, \colon \, v(x)=v(1-x) \,\, \text{for} \,\, x \in [0,1] \}, 
\end{equation}
where $\psi$ is a symmetric cone satisfying Assumption \ref{ass:psi}. 
For problem \eqref{obstacle_problem:1.2} with $p=2$ it was proved by \cites{Anna,Moist,MariusExistence,Y2021} that there exists a threshold $h>0$ satisfying the following: 
if $\psi(\tfrac{1}{2}) < h$, then there exists a unique minimizer $u \in M_{\rm sym}(\psi)$ of~$\mathcal{E}_2$ in~$M_{\rm sym}(\psi)$; 
if $\psi(\tfrac{1}{2}) \ge h$, then there is no minimizer of~$\mathcal{E}_2$ in~$M(\psi)$. 
Theorem~\ref{Theorem:1.2} gives an extension of the result to the case $p \neq 2$: 
\begin{theorem} \label{Theorem:1.2}
Let $ p > 1$. There exists a constant $h_*>0$ satisfying the following$\colon$ 
Let $\psi$ be a symmetric cone and satisfy Assumption {\rm \ref{ass:psi}}. 
\begin{enumerate}
\item[{\rm (i)}] If $\psi(\frac{1}{2})<h_*$, then there is a unique minimizer $u \in M_{\rm sym}(\psi)$ of $\mathcal{E}_p$ in $M_{\rm sym}(\psi)$.
\item[{\rm (ii)}] If $\psi(\frac{1}{2}) \geq h_*$, then there is no minimizer of $\mathcal{E}_p$ in $M(\psi)$.  
\end{enumerate}
\end{theorem}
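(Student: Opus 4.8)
The plan is to exploit the special structure of the $p$-elastica functional $\mathcal{E}_p$ together with symmetry and scaling, reducing problem \eqref{obstacle_problem:1.2} to a one-parameter family of problems indexed by the height $\psi(\tfrac12)$. First I would observe that by concavity of any critical point (Theorem \ref{Theorem:1}) and the symmetry of the admissible class $M_{\rm sym}(\psi)$, a symmetric minimizer $u$ is nonincreasing on $[\tfrac12,1]$, nondecreasing on $[0,\tfrac12]$, and touches the obstacle only possibly at the single point $x=\tfrac12$; moreover $\mathcal{E}_p(u)$ is invariant under the natural rescaling that sends a curve over $[0,\tfrac12]$ to one over $[0,1]$ and rescales the height. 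This means the relevant geometry is governed entirely by the polar tangential angle (the keyword advertised in the abstract), which for the $p$-elastica admits an explicit description: the curvature profile of a free $p$-elastica can be written in terms of the functions $\cn, \sn, \dn, \am$ (cf. the macros defined in the preamble), and the total turning of the arc is a monotone function of an amplitude parameter.

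Second, I would construct the candidate minimizer for given small height. Consider the ``half-problem'' over $[\tfrac12,1]$ with one end pinned at $(\tfrac12, \psi(\tfrac12))$ with prescribed tangent (from symmetry the curve must be horizontal at $x=\tfrac12$ if it is $C^1$ there, or have a corner --- this dichotomy must be analysed) and the other end at $(1,0)$. Among curves that stay above the affine obstacle, the minimizer either stays strictly above the obstacle on $(\tfrac12,1)$ and is then a free $p$-elastica (a classical borderline/``wavelike'' elastica segment), or coincides with the obstacle on an interval --- but the latter is excluded for $\mathcal{E}_p$-critical points by Theorem \ref{Theorem:1}, since such $u$ would have to be concave with $u''<0$ in the interior, contradicting affinity. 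So the admissible competitors over the half-interval are parametrised by the free $p$-elastica family, and one computes $\mathcal{E}_p$ as an explicit function $\mathcal{F}(h)$ of the height $h=\psi(\tfrac12)$ (using the known first integrals of \eqref{EL-eq:1.2}, which give $|\kappa|^{p-2}\kappa$ and arclength in terms of elliptic-type integrals). The threshold $h_*$ is then defined as the supremum of heights $h$ for which this explicit family actually produces a graph over $[\tfrac12,1]$ reaching height $0$ without the curve becoming non-graphical (i.e. the tangent angle staying in $(-\tfrac\pi2,\tfrac\pi2)$); equivalently, $h_*$ is where the graphicality constraint becomes active.

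Third, for part (i), with $h=\psi(\tfrac12)<h_*$ I would show existence by the direct method: $\mathcal{E}_p$ is coercive and weakly lower semicontinuous on $M_{\rm sym}(\psi)$ (bounded energy gives a $W^{2,p}$ bound via the nondegeneracy $\dot G>0$ away from the boundary, plus the $W^{1,p}_0$ constraint controls the gradient), and $M_{\rm sym}(\psi)$ is weakly closed; the only subtlety is that a minimizing sequence could in principle ``escape'' by having its height at $\tfrac12$ not realised, but the constraint $v(\tfrac12)\ge\psi(\tfrac12)$ is preserved under weak $W^{2,p}$ convergence. For uniqueness I would invoke the symmetry/Talenti-type machinery referenced before the theorem (Proposition \ref{prop:simsmall}, Corollary \ref{Cor:symmetry_minimizer}) together with a strict-convexity-type argument on the reduced one-dimensional problem: the explicit energy $\mathcal{F}(h)$ and the explicit curvature profile show the half-elastica realising a given sub-threshold height is unique, and gluing by reflection gives a unique symmetric minimizer. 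For part (ii), with $h\ge h_*$ I would argue nonexistence even in the full class $M(\psi)$: suppose $u\in M(\psi)$ were a minimizer; by Theorem \ref{Theorem:1} it is a $C^2$ concave critical point with $u''<0$ in $(0,1)$ and $u''(0)=u''(1)=0$, so it never touches the obstacle away from an isolated point, hence its restriction to each side is a free $p$-elastica segment; the concavity plus the boundary conditions $u(0)=u(1)=0$, $u(\tfrac12)\ge h_*$ force the tangent angle to exceed the graphicality bound somewhere (this is exactly what $h_*$ measures), contradicting $u\in W^{2,p}(0,1)$ being a graph. A cleaner route is to produce, for every competitor of height $\ge h_*$, a strictly better competitor of slightly smaller height (using that the obstacle is only a lower bound and the rescaling decreases $\mathcal{E}_p$ when height decreases past $h_*$), showing the infimum is not attained.

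The main obstacle I anticipate is pinning down $h_*$ and proving the sharp dichotomy: one must show that the graphicality (non-verticality) constraint, and not some other phenomenon, is precisely what obstructs existence, and that it does so monotonically in $h$. Concretely, this requires a careful analysis of the free $p$-elastica ODE \eqref{EL-eq:1.2} --- identifying the correct ``borderline'' solution (the analogue of the rectangular elastica for $p=2$), expressing its height, width and energy through the $p$-analogues of elliptic integrals, and proving monotonicity of the height-to-amplitude map. The behaviour at $x=\tfrac12$ (whether the symmetric minimizer is $C^1$ with a horizontal tangent there or develops a corner, and whether a corner is even admissible given $u\in W^{2,p}$) is the delicate local point that the concavity from Theorem \ref{Theorem:1} should resolve, since $C^2$ regularity of critical points forbids genuine corners, forcing the horizontal-tangent matching condition that makes the reflection gluing legitimate.
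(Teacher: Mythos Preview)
Your overall architecture is right: existence via the direct method under an energy bound, the half-curve being a free $p$-elastica arc, and $h_*$ marking where the endpoint tangent becomes vertical. But two points need correction.

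First, your uniqueness argument invokes the wrong tool. Proposition~\ref{prop:simsmall} and Corollary~\ref{Cor:symmetry_minimizer} are about producing a \emph{symmetric} minimizer from a possibly asymmetric one in $M(\psi)$; they say nothing about uniqueness within $M_{\rm sym}(\psi)$, and no strict convexity of $\mathcal{E}_p$ is available. The paper's uniqueness is entirely different: the substitution $\omega := |\kappa|^{p-2}\kappa$ turns \eqref{EL-eq:1.2} into the \emph{semilinear} ODE $p\omega'' + (p-1)|\omega|^{2/(p-1)}\omega = 0$, solved explicitly by generalized trigonometric functions $\sin_{2,2p'}$ (not Jacobi $\cn,\sn,\dn$). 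Hence the arclength parametrization of any symmetric minimizer on $[0,\tfrac12]$ is a rotated, rescaled copy of a single reference curve $\Gamma_1$. The polar tangential angle $\varpi_1$ of $\Gamma_1$ is then shown to be \emph{strictly monotone} (Lemma~\ref{polar_tangent}, via Miura's criterion $\kappa\kappa'>0$). Since $u(\tfrac12)=\psi(\tfrac12)$ and $u'(\tfrac12)=0$ pin down $\tan(-\varpi)$ at the endpoint, strict monotonicity forces the scaled arclength --- hence the rotation angle and the scale $\lambda$ --- to be unique. You flag ``monotonicity of the height-to-amplitude map'' as the obstacle in your last paragraph; you should recognize that this monotonicity \emph{is} the uniqueness mechanism, not an auxiliary difficulty.

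Second, for nonexistence in $M(\psi)$ your first route is essentially what the paper does, but the argument must cover a possibly \emph{asymmetric} minimizer: take the maximum point $x_{\max}$ (WLOG $\le \tfrac12$), apply the $\Gamma_\lambda$-representation on $[0,x_{\max}]$, and obtain $\tan\bigl(-\varpi_{\lambda_u}(\mathbf{s}(x_{\max}))\bigr) = u(x_{\max})/x_{\max} \ge \psi(\tfrac12)/x_{\max} \ge 2h_*$, while the monotonicity of $\varpi_1$ and \eqref{eq:ubound-length} give the strict upper bound $\tan(-\varpi_1(s)) < Y_1(L_1)/X_1(L_1) = 2h_*$ for all $s\in(0,L_1)$. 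Your ``cleaner route'' via strictly better competitors is not pursued in the paper and would require a separate argument. Finally, the corner issue at $x=\tfrac12$ is not delicate: $u\in C^2$ by Theorem~\ref{Theorem:1} and symmetry immediately give $u'(\tfrac12)=0$.
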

Here we note that the value $h_*$ can be explicitly given (see \eqref{eq:h_*} below).
To prove Theorem \ref{Theorem:1.2}, we employ a strategy, given by \cite{Moist}, making use of the polar tangential angle. 
To this aim, using a substitution $\omega:=|\kappa|^{p-2} \kappa$, first we derive an explicit formula of a solution of the Euler--Lagrange equation \eqref{EL-eq:1.2}.  
The explicit formula gives us a special 
curve, the so called $p$-rectangular. 
By way of the $p$-rectangular we extend the polar tangential strategy into the case $p \neq 2$. 
Both the substitution $\omega$ and the Euler's substitution $w_u$ reduce the second-order ODE in terms of $\kappa$ to 
a second-order ODE in terms of $u$. However, the reduced equations are different, and each reduced equation independently provides properties of critical points
(more precisely see Remark~\ref{rem:6.3toberefereedtointheintro} below).

This paper is organized as follows: 
In Section~\ref{section:EL-analysis}, using the Euler's substitution, we motivate the definition of critical points of $\mathcal{E}$ in $M(\psi)$. 
In Section~\ref{chap:criticalpt}, we study the optimal regularity and qualitative properties of critical points which are introduced in Section \ref{section:EL-analysis}. 
More precisely, first we prove the regularity of the Euler's substitution in Section~ \ref{section:reg-E-sub}. 
By way of the results in Section~\ref{section:reg-E-sub} we show the following for critical points of $\mathcal{E}$ in $M(\psi)$: Nondegeneracy (Section~\ref{section:nondege-cp}); properties of the coincidence set (Section~\ref{sec:coinset}); optimal regularity (Section~\ref{section:Optimal-reg-cp}). 
In Section~\ref{section:existence-sym-mini} we prove the existence of minimizers of $\mathcal{E}$ in $M(\psi)$ (Section~\ref{section:existence-mini}) and symmetry of minimizers of $\mathcal{E}$ in $M(\psi)$ under a smallness condition on symmetric obstacles (Section~\ref{sec:symmetry}). 
The nonexistence of minimizers for $\mathcal{E}$ in $M(\psi)$ for large cone obstacles $\psi$ is proved in Section~ \ref{sec:nonexistence}. 
Finally we prove Theorem~\ref{Theorem:1.2} in Section~\ref{section:4}. 

\smallskip

\noindent
{\bf Acknowledgements.}
The first author has been funded partially by the DFG (German Research Foundation)- Projectnumber: 404870139.
The third author was supported in part by JSPS KAKENHI Grant Number JP19H05599, 20KK0057, and 21H00990. 
The fourth author was supported by JSPS KAKENHI Grant Number JP19J20749. The authors would like to thank the anonymous referee for helpful suggestions. 
\\ {\bf Data availability statement.}  Data sharing not applicable to this article as no datasets were generated or analysed during the current study.
\section{Euler--Lagrange Analysis} \label{section:EL-analysis}
One of the main novelties in this article is the detailed study of the \emph{Euler--Lagrange equation}. It relies on an observation that was already aware to Euler (see~\cite{Euler}), who understood that for the classical elastic energy the substitution 
\begin{equation}
    v(x) := \frac{u''(x)}{(1+u'(x)^2)^\frac{5}{4}}
\end{equation}
transforms the Euler--Lagrange equation into a \emph{second-order elliptic equation} for~$v$. This \emph{order reduction} allows usage of the maximum principle and will have extensive consequences for the study of the obstacle problem, cf. \cite{MariusExistence ,GrunauOkabe}. We can  generalize Euler's substitution to our generalized $p$-elastica functional with a refined substitution.  In the following we always consider $p \in (1,\infty)$ as fixed if not stated otherwise. 

To this end, let $p \in (1,\infty)$ and notice that $\mathcal{E}: W^{2,p}(0,1) \rightarrow \mathbb{R}$ is Fréchet differentiable. Its Fréchet derivative reads (for each $\phi \in W^{2,p}(0,1)$ and using Assumption~\ref{ass:G})
\begin{align}
\begin{split} \label{eq:FD}
    D\mathcal{E}(u) (\phi) &= \frac{d}{dt}\bigg\vert_{t= 0}  \mathcal{E}(u + t \phi )  \\
    &= p  \int_0^1  \left( \dot{G}(u')^p |u''|^{p-2} u'' \phi'' +  \dot{G}(u')^{p-1} \Ddot{G}(u') \phi' |u''|^p \right) \dx.  
\end{split}
\end{align}
Now Euler's trick can be resembled as follows. In the second summand write $|u''|^p = ( |u''|^{p-2} u'' ) u''$. Then one can define $w \in L^{\frac{p}{p-1}}(0,1)$ as Euler's substitution (see Definition \ref{def:w}) 
and obtain by factoring out that 
\begin{equation}
    D\mathcal{E}(u)(\phi) = - \int_0^1 w \left( \dot{G}(u') \phi'' + \Ddot{G}(u') u'' \phi' \right) \dx.
\end{equation}
One can now observe by the product rule that the expression in parentheses is nothing but $\frac{d}{dx} (\dot{G}(u') \phi')$ and thus one obtains
\begin{equation}\label{eq:doubleu}
    D\mathcal{E}(u)(\phi) = - \int_0^1 w \frac{d}{dx} \left(\dot{G}(u') \phi'\right) \dx .
\end{equation}
In particular, the equation $D \mathcal{E}(u) = 0$ is a very weak form of an \emph{elliptic equation of second order} for $w$, namely of
\begin{equation}
   - \frac{d}{dx} \left( \dot{G}(u'(x)) \frac{d}{dx} w(x)  \right) = 0. 
\end{equation}
Notice that this equation is elliptic since $\dot{G} > 0$ by Assumption \ref{ass:G}. 
It becomes obvious that the quantity $w$ plays an important role in our analysis.

\subsection{A measure-valued Euler--Lagrange equation}

The basis of our measure-valued approach is given by the study of \emph{variational inequalities}, which is a theory of critical points for  functionals defined on  a convex subset of a Banach space, cf. \cite{Kinderlehrer} for details. For obstacle problems it is usual that a notion of critical points can be established by means of a variational inequality. Such inequality can be reformulated as a measure-valued Euler--Lagrange equation. The measure that appears is always supported on the \emph{coincidence set} $\{ u = \psi\}$. Since this measure-valued equation is (second-order) elliptic in terms of $w_u$, we can proceed our analysis using methods for measure-valued elliptic PDEs. We refer to \cite{Ponce} for a very detailed introduction to these types of PDEs.

We will next derive that the critical point equation in Defintion \ref{def:critical} is a necessary criterion for a minimizer.
\begin{prop}[Euler--Lagrange equation]\label{prop:ELeq}
Let $u \in M(\psi)$ be a minimizer. Then there exists a finite positive Radon measure $\mu$ supported on $\{ u= \psi \}$ such that 
\begin{equation}\label{eq:minELeq}
   -  \int  w(x) \frac{d}{dx} \left( \dot{G}(u'(x))  \phi'(x)  \right) = \int \phi \; \mathrm{d}\mu  \quad \forall \phi \in W^{2,p}(0,1) \cap W_0^{1,p}(0,1), 
\end{equation}
with $w =w_u$ being Euler's substitution, see \eqref{gEs}.  We call $\mu$ \emph{coincidence measure}. 
\end{prop}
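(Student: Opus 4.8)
The plan is to establish the measure-valued Euler--Lagrange equation \eqref{eq:minELeq} via a standard variational-inequality argument, exploiting the convexity of the admissible set $M(\psi)$ and the Fréchet differentiability of $\mathcal{E}$ established in \eqref{eq:FD}. First I would record that $M(\psi)$ is a closed convex subset of $W^{2,p}(0,1) \cap W^{1,p}_0(0,1)$. Given a minimizer $u \in M(\psi)$ and an arbitrary competitor $v \in M(\psi)$, convexity gives $u + t(v-u) = (1-t)u + tv \in M(\psi)$ for all $t \in [0,1]$, so the function $t \mapsto \mathcal{E}(u + t(v-u))$ is minimized at $t=0$ on $[0,1]$. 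Differentiating at $t=0^+$ and using \eqref{eq:doubleu} yields the variational inequality
\begin{equation*}
    D\mathcal{E}(u)(v-u) = -\int_0^1 w \,\frac{d}{dx}\bigl(\dot{G}(u')(v-u)'\bigr)\dx \geq 0 \qquad \forall\, v \in M(\psi).
\end{equation*}
Writing $\phi := v - u$, the set of admissible test directions is exactly $\mathcal{K} := \{\phi \in W^{2,p}(0,1)\cap W^{1,p}_0(0,1) : \phi \geq \psi - u \text{ on } [0,1]\}$, which is a convex cone with vertex at $0$ shifted by the nonpositive function $\psi - u$; in particular every $\phi \in W^{2,p}\cap W^{1,p}_0$ with $\phi \geq 0$ lies in $\mathcal{K}$ (after translating), and $\pm\phi \in \mathcal{K}$ whenever $\phi$ is supported in the noncoincidence set $\{u > \psi\}$.

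Next I would define the linear functional $\Lambda \colon W^{2,p}(0,1)\cap W^{1,p}_0(0,1) \to \mathbb{R}$ by $\Lambda(\phi) := -\int_0^1 w \,\frac{d}{dx}\bigl(\dot{G}(u')\phi'\bigr)\dx$, which is bounded since $w \in L^{p/(p-1)}$ and $\phi \mapsto \frac{d}{dx}(\dot{G}(u')\phi') = \dot{G}(u')\phi'' + \ddot{G}(u')u''\phi'$ maps $W^{2,p}$ boundedly into $L^p$ (using $u' \in C^0$, hence $\dot{G}(u'), \ddot{G}(u')$ bounded, and $u'' \in L^p$). The variational inequality says $\Lambda(\phi) \geq 0$ for all $\phi \geq 0$ in $W^{2,p}\cap W^{1,p}_0$, i.e. $\Lambda$ is a positive functional. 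The key step is then to represent $\Lambda$ by a measure: since $W^{2,p}(0,1) \hookrightarrow C^1([0,1])$ compactly (and in particular $C_c^\infty(0,1)$ is contained in our test space), the restriction of $\Lambda$ to $C_c(0,1)$ is a positive linear functional, so by the Riesz representation theorem there is a nonnegative Radon measure $\mu$ on $(0,1)$ with $\Lambda(\phi) = \int \phi \,\mathrm{d}\mu$ for $\phi \in C_c^\infty(0,1)$; a density/approximation argument upgrades this to all $\phi \in W^{2,p}\cap W^{1,p}_0$, and testing with $\phi \equiv$ a fixed positive bump shows $\mu$ is finite. Finally, if $\phi \in W^{2,p}\cap W^{1,p}_0$ is supported in the open set $\{u > \psi\}$, then $\pm\phi$ are both admissible directions, forcing $\Lambda(\phi) = 0$; since such $\phi$ are dense enough to test $\mu$ on $\{u>\psi\}$, this shows $\mu(\{u > \psi\}) = 0$, i.e. $\supp \mu \subseteq \{u = \psi\}$.

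The main obstacle is the representation/approximation step: one must be careful that $\Lambda$ restricted to $C_c(0,1)$ really is continuous in the sup-norm in order to invoke Riesz — a priori $\Lambda$ is only controlled by the $W^{2,p}$-norm. The resolution is precisely the positivity: a positive linear functional on $C_c(0,1)$ is automatically locally bounded in the uniform norm (dominate $\phi$ by $\|\phi\|_\infty$ times a fixed nonnegative bump larger than $\mathbf{1}_{\supp\phi}$), which is the standard route to Riesz representation for positive functionals and does not require a priori sup-norm continuity. A secondary technical point is justifying that the admissible directions supported in $\{u>\psi\}$ suffice to conclude $\supp\mu \subseteq \{u=\psi\}$; here one uses that $\{u > \psi\}$ is open (as $u, \psi$ are continuous) and exhausts it by compact sets on which smooth cutoffs can be built, together with the fact that scaling such a $\phi$ by a small $\varepsilon > 0$ keeps $u \pm \varepsilon\phi \geq \psi$. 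Once these points are in place, combining the two halves gives \eqref{eq:minELeq} with $\mu$ a finite positive Radon measure supported on the coincidence set.
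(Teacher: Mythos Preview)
Your overall strategy is exactly the paper's: derive the variational inequality $D\mathcal{E}(u)(v-u)\ge 0$, test with nonnegative $\phi$ to obtain a positive linear functional, represent it by a Radon measure via Riesz, and show the measure is supported on $\{u=\psi\}$ by testing with $\pm\phi$ supported in $\{u>\psi\}$. The support and positivity arguments are fine.

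There is, however, a genuine gap in the step you label ``a density/approximation argument upgrades this to all $\phi \in W^{2,p}\cap W^{1,p}_0$.'' The difficulty is that $C_c^\infty(0,1)$ is \emph{not} dense in $W^{2,p}(0,1)\cap W^{1,p}_0(0,1)$ for the $W^{2,p}$-topology: its closure is $W^{2,p}_0(0,1)$, i.e.\ functions with $\phi(0)=\phi(1)=\phi'(0)=\phi'(1)=0$. A generic $\phi\in W^{2,p}\cap W^{1,p}_0$ has nonvanishing boundary derivative, so no sequence in $C_c^\infty$ converges to it in $W^{2,p}$, and since $\Lambda$ is only continuous for the $W^{2,p}$-norm you cannot pass to the limit. (Your discussion of the ``main obstacle'' concerns the Riesz step, which is fine; the real boundary issue is this extension.) Relatedly, ``testing with a fixed positive bump'' cannot directly give finiteness of $\mu$ on all of $(0,1)$, since no $\phi\in C_c^\infty(0,1)$ dominates $\mathbf{1}_{(0,1)}$.

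The paper closes this gap by a cutoff decomposition that exploits the compactness of $\{u=\psi\}$ in $(0,1)$ (which follows from Assumption~\ref{ass:psi}). Choose $\xi\in C_c^\infty(0,1)$ with $\xi\equiv 1$ on a neighborhood of $\{u=\psi\}$ and split $\phi=\phi\xi+\phi(1-\xi)$. The first piece $\phi\xi$ lies in $W^{2,p}_0(0,1)$, so the identity $\Lambda(\phi\xi)=\int \phi\xi\,\mathrm{d}\mu$ holds by genuine $W^{2,p}$-density of $C_c^\infty$ in $W^{2,p}_0$; moreover $\int\phi\xi\,\mathrm{d}\mu=\int\phi\,\mathrm{d}\mu$ since $\xi\equiv 1$ on $\supp\mu$. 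The second piece $\phi(1-\xi)$ is supported in $\{u>\psi\}$, so $u\pm\epsilon\phi(1-\xi)\in M(\psi)$ for small $\epsilon$ and the variational inequality gives $\Lambda(\phi(1-\xi))=0$. Summing the two yields \eqref{eq:minELeq}. Finiteness of $\mu$ then follows immediately from the compact support $\{u=\psi\}\subset\subset(0,1)$, not from a bump-function argument.
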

\begin{proof}
Let $u \in M(\psi)$ be a minimizer. Since $M(\psi)$ (see \eqref{eq:admset}) is a convex subset of $W^{2,p}(0,1)\cap W_0^{1,p}(0,1)$, for each $v \in M(\psi)$ one has that $u + t (v-u)  \in M(\psi)$ for all $t \in [0,1]$. In particular, minimality of $u$ implies  
\begin{equation}\label{eq:varpos}
    0 \leq \lim_{t \rightarrow 0+} \frac{ \mathcal{E}(u+ t(v-u) ) - \mathcal{E}(u) }{t} = \frac{d}{dt} \bigg\vert_{t = 0} \mathcal{E}(u+t(v-u)) = D \mathcal{E}(u) (v-u). 
\end{equation}
Next let $\phi \in C_0^\infty(0,1)$ be such that $\phi \geq 0$. Considering $v = u + \phi \in M(\psi)$ in the previous equation we find with \eqref{eq:doubleu}
\begin{equation}
    0 \leq D\mathcal{E}(u)(\phi) = -  \int_0^1 w \frac{d}{dx} \left( \dot{G}(u') \phi' \right) \dx.  
\end{equation}
This and the Riesz--Markow--Kakutani Theorem (cf. \cite[Theorem 1.39]{EvGar}) yield that there exists a positive (but not necessarily finite) Radon-measure $\mu$ on $(0,1)$ such that 
\begin{equation}
  - \int_0^1 w \frac{d}{dx} \left(\dot{G}(u') \phi' \right) \dx =  \int \phi \; \mathrm{d}\mu \quad \forall \phi \in C_0^\infty(0,1).  
\end{equation}
We show next that $\mu$ is supported on $\{ u = \psi \}$.
Indeed, fix some arbitrary $\phi \in C_0^\infty( (0,1) \cap  \{ u > \psi\})$. One readily checks that there exists $\epsilon_0 > 0$ such that for all $\epsilon \in (-\epsilon_0,\epsilon_0)$ one has $u + \epsilon \phi \in M(\psi)$. Considering  \eqref{eq:varpos} with $v = u + \epsilon \phi$ and using~ \eqref{eq:doubleu} we obtain
\begin{equation}
    0 \leq - \epsilon \int_0^1 w \frac{d}{dx} \left( \dot{G}(u') \phi' \right) \dx = \epsilon \int \phi \; \mathrm{d}\mu.
\end{equation}
Looking at positive and negative values of $\epsilon$ we conclude that $\int \phi \; \mathrm{d}\mu = 0$ and since $\phi \in C_0^\infty( (0,1) \cap \{ u > \psi \})$ was arbitrary we infer that the support $\mathrm{spt}(\mu)$ satisfies
\[
\mathrm{spt}(\mu) \cap ((0,1) \cap \{ u > \psi \}) = \emptyset,
\]
i.e. $\mathrm{spt}(\mu) \subset \{u = \psi \}.$
Since $\{u = \psi\}$ is a compact subset of $(0,1)$ (cf. Remark~\ref{rem:coinci}), we infer that $\mu$ is also finite.
We have shown that 
\begin{equation}\label{eq:ELeq}
    - \int_0^1 w \frac{d}{dx} \left( \dot{G}( u') \phi' \right) \; \mathrm{d}x  = \int \phi \; \mathrm{d}\mu \quad \forall \phi \in C_0^\infty(0,1).
\end{equation}
It remains to show that this equation holds actually true for all $\phi \in W^{2,p}(0,1) \cap W_0^{1,p}(0,1)$. Notice first that \eqref{eq:ELeq} holds true for all $\phi \in W_0^{2,p}(0,1)$ by taking the $W^{2,p}$-closure of $C_0^\infty$. Now choose $\xi \in C_0^\infty(0,1)$ such that $\xi \equiv 1$ on some open neighborhood $O \subset \subset  (0,1)$ of $\{ u = \psi \}.$ Such $\xi$ exists since $\{ u= \psi \}$ is a compact subset of $(0,1)$. Let $\phi \in W^{2,p}(0,1) \cap W_0^{1,p}(0,1)$ be arbitrary. Then $\phi = (\phi \xi) +  \phi  (1- \xi) $ and $\phi \xi \in W_0^{2,p}(0,1)$. 
Therefore 
\begin{align}
    & - \int_0^1 w \frac{d}{dx} \left( \dot{G}( u') \phi' \right) \; \mathrm{d}x  \\ =& -\int_0^1 w \frac{d}{dx} \left( \dot{G}( u') (\phi \xi)' \right) \; \mathrm{d}x  - \int_0^1 w \frac{d}{dx} \left( \dot{G}( u') (\phi (1-\xi))' \right) \; \mathrm{d}x.
\end{align}
Now 
\begin{equation}\label{eq:w2p0}
    -\int_0^1 w \frac{d}{dx} \left( \dot{G}( u') (\phi \xi)' \right) \; \mathrm{d}x = \int (\phi \xi) \; \mathrm{d}\mu  = \int \phi \; \mathrm{d}\mu,
\end{equation}
since $\xi \equiv 1$ on $\{u = \psi\} \supset \mathrm{spt}(\mu)$. 
For the second summand we obtain by \eqref{eq:doubleu}
\begin{equation}
    - \int_0^1 w \frac{d}{dx} \left( \dot{G}( u') (\phi (1-\xi))' \right) \; \mathrm{d}x = D \mathcal{E}(u) ( \phi( 1- \xi)).
\end{equation}
Since $\phi ( 1- \xi)$ has support compactly contained in $\{ u > \psi \} \subset [0,1]$ we infer that $ u + \epsilon \phi ( 1- \xi) \in M(\psi)$ for all $\epsilon \in (-\epsilon_0,\epsilon_0)$ for some $\epsilon_0 > 0$. This implies by considering~\eqref{eq:varpos} with $v= u + \epsilon \phi ( 1- \xi) $ that  
\begin{equation}
     0 \leq D \mathcal{E}(u) ( \epsilon \phi ( 1- \xi) ) = \epsilon D \mathcal{E}(u) ( \phi( 1-\xi)). 
\end{equation}
Again looking at positive and negative values of $\epsilon$ we find that $D \mathcal{E}(u) ( \phi ( 1-\xi)) = 0$ and therefore 
\begin{equation}
     - \int_0^1 w \frac{d}{dx} \left( \dot{G}( u') (\phi (1-\xi))' \right) \; \mathrm{d}x = 0.  
\end{equation}
This and \eqref{eq:w2p0} imply \eqref{eq:minELeq}.
\end{proof}
\begin{remark}\label{rem:coinci}
By Assumption \ref{ass:psi}, 
for each $u \in M(\psi)$ the \emph {coincidence set} \begin{equation}
    \{ u = \psi \} := \{ x \in [0,1] : u(x) = \psi(x) \} 
\end{equation}
is a compact subset of $(0,1)$, since $\psi(0) < 0 = u(0), \psi(1) < 0 = u(1)$.
\end{remark}

In the sequel we will study (optimal) regularity properties of critical points. Doing so we will discover certain useful geometric properties that will help to characterize existence and nonexistence of minimizers.

\section{Optimal Regularity of critical points}\label{chap:criticalpt}

Now that a convenient critical point equation is established, elliptic regularity can be applied to examine its solutions $u \in M(\psi)$. A problem is that this elliptic regularity discussion first only applies to \emph{Euler's substitution} $w_u$ and not to $u$ itself. In order to translate regularity results for $w_u$ into regularity results for $u$, a \emph{degeneracy phenomenon} must be taken into account. Such phenomenon happens  --- if $p > 2$ ---  exactly on the set $\{ u'' = 0 \}$, as we shall see.  The elliptic maximum principle will enable us to overcome this problem. Indeed, one can show that $\{ u'' = 0 \} = \{ 0, 1 \}$. In particular, the mentioned degeneracy phenomenon can only occur at the boundary $\partial (0,1)$. 

\subsection{Regularity for Euler's substitution} \label{section:reg-E-sub}
In the sequel we denote by $\mathcal{L}^1$ the one-dimensional \emph{Lebesgue measure}. 
With this notation fixed we can use some results for measure-valued PDEs to obtain regularity of Euler's substitution  $w$. The following proposition shows $W^{1,\infty}$-regularity of $w$ and, as an immediate consequenence of \eqref{gEs}, also (non-optimal) $C^2$-regularity of $u$.
\begin{prop}[Regularity and boundary conditions for $w$]\label{prop:reguw}
Let $u \in M(\psi)$ be a critical point in the sense of Definition \ref{def:critical} and $w$ 
be Euler's substitution. 

Then $w \in W^{1,\infty}(0,1)$ and $w(0) = w(1) = 0$. In particular, $w$ solves the \emph{measure-valued elliptic equation} 
\begin{equation}\label{eq:elliptic}
    \int a(x) w'(x)  \phi'(x) \, dx= \int \phi \; \mathrm{d}\mu \quad \forall \phi \in W^{2,p}(0,1) \cap W_0^{1,p}(0,1),
\end{equation}
where $a(x):= \dot{G}(u'(x))$ for all $x \in (0,1)$.
Moreover, there exists a nonincreasing bounded function $m : (0,1) \rightarrow \mathbb{R}$, which is locally constant on $\{ u > \psi \}$ such that 
\begin{equation}\label{eq:aw'=m}
    a(x) w'(x) = m(x) \quad \textrm{for a.e.} \; x \in (0,1).
\end{equation}
Furthermore, $u \in C^2([0,1])$ and $u''(0) = u''(1) = 0.$ 
\end{prop}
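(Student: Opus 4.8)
The plan is to extract the content of the critical point equation \eqref{eq:solELeq} in stages. First I would rewrite the left-hand side of \eqref{eq:solELeq} using the product rule: since $\frac{d}{dx}(\dot G(u')\phi') = \dot G(u')\phi'' + \Ddot G(u')u''\phi'$, the equation says precisely that the distributional derivative of $a w'$, where $a(x) = \dot G(u'(x))$, equals $\mu$ in the sense that $\int a w' \phi' = \int \phi\,\mathrm{d}\mu$ for all admissible $\phi$. (One should first check this reformulation is legitimate: $w \in L^{p/(p-1)}$ and $a \in C^0([0,1])$, and for $\phi \in C_0^\infty$ the integration is unproblematic; then test functions $\phi$ are dense enough.) From $\int a w' \phi' = \int \phi\,\mathrm d\mu$ with $\mu$ a finite measure, one reads off that $(a w')' = -\mu$ as distributions on $(0,1)$, so $a w'$ is equal a.e.\ to a function $m$ of bounded variation whose distributional derivative is $-\mu$; since $\mu \ge 0$, $m$ is nonincreasing, and since $\mathrm{spt}(\mu) \subset \{u = \psi\}$, $m$ is locally constant on the open set $\{u > \psi\}$. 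Boundedness of $m$ follows because $m$ is monotone on a bounded interval with, as we will see, controlled boundary behaviour. This gives \eqref{eq:aw'=m}.

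Next I would upgrade this to $w \in W^{1,\infty}$. Since $a = \dot G(u')$ with $u' \in W^{1,p} \hookrightarrow C^0$ and $\dot G$ continuous and strictly positive, $a$ is continuous and bounded below by a positive constant on $[0,1]$; hence $w' = m/a \in L^\infty(0,1)$, so $w \in W^{1,\infty}(0,1)$ and in particular $w \in C^0([0,1])$. For the boundary conditions $w(0) = w(1) = 0$: the natural boundary here comes from the fact that test functions $\phi$ range over all of $W^{2,p}\cap W_0^{1,p}$, not just those vanishing to second order at the endpoints. Integrating \eqref{eq:solELeq} by parts (now legitimate since $w \in W^{1,\infty}$ and $a w' = m \in BV$) produces a boundary term of the form $[\text{const}\cdot w \cdot \dot G(u')\phi']_0^1$ — more precisely, writing the left side as $\int (aw')'\phi\,$ minus boundary contributions, one gets a term $w(1)a(1)\phi'(1) - w(0)a(0)\phi'(0)$ that must vanish for all admissible $\phi$; since $\phi'(0),\phi'(1)$ are free and $a > 0$, this forces $w(0) = w(1) = 0$. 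I expect this integration-by-parts bookkeeping — carefully tracking which boundary terms survive given the exact regularity of $w$ and the exact class of test functions — to be the main technical obstacle, and the place where the definition of critical point (allowing $\phi$ with nonzero boundary derivative) is really used.

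Finally, the regularity of $u$ itself: inverting Euler's substitution \eqref{gEs}, we have $|u''|^{p-2}u'' = -\frac{w}{p\,\dot G(u')^{p-1}}$, and since the map $t \mapsto |t|^{p-2}t$ is a homeomorphism of $\mathbb R$ with continuous inverse $t \mapsto |t|^{1/(p-1)-1}t$, and the right-hand side is continuous (as $w$ and $\dot G(u')$ are continuous and $\dot G > 0$), we conclude $u'' \in C^0([0,1])$, i.e.\ $u \in C^2([0,1])$. Evaluating at the endpoints, $w(0) = w(1) = 0$ gives $|u''(0)|^{p-2}u''(0) = |u''(1)|^{p-2}u''(1) = 0$, hence $u''(0) = u''(1) = 0$. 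This also retroactively justifies that $m$ is bounded near the endpoints. I would present the argument in this order: reformulate the equation and identify $m$; deduce $w \in W^{1,\infty}$; obtain the boundary conditions via integration by parts; then transfer everything to $u$.
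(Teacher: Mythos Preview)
Your overall strategy matches the paper's: identify the function $m$ from the equation, deduce $w \in W^{1,\infty}$, read off the boundary conditions from the freedom in $\phi'(0),\phi'(1)$, and invert Euler's substitution to get $u \in C^2$ with $u''(0)=u''(1)=0$.

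There is, however, one genuine gap in your reformulation step. You write that \eqref{eq:solELeq} ``says precisely that $\int a w'\phi' = \int\phi\,\mathrm d\mu$'' and then pass from $aw'=m\in BV$ directly to $w'=m/a\in L^\infty$. But at this stage $w$ is only known to lie in $L^{p/(p-1)}$, so $w'$ is merely a distribution: neither the integral $\int aw'\phi'\,\mathrm dx$ nor the pointwise identity $w'=m/a$ is meaningful yet, and dividing a distributional identity by the merely continuous (not $C^\infty$) coefficient $a$ is exactly the delicate point. The paper closes this by first isolating the $\phi''$ term: with $f:=\Ddot G(u')u''w\in L^1$ and $g(x):=\mu((x,1))\in L^\infty$ one has $-\int (aw)\phi''=\int(f+g)\phi'$, and testing against $\phi$ with $\phi'=\xi-\int_0^1\xi$ for arbitrary $\xi\in C_0^\infty$ yields $aw\in W^{1,1}$ with $(aw)'=f+g+C$. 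Only then is $w$ (hence $u''$, hence $f$) continuous, and a short bootstrap gives $aw\in W^{1,\infty}$ and finally $w\in W^{1,\infty}$. Your distributional shortcut can in fact be made rigorous --- define $aw':=(aw)'-a'w$ as a distribution, note $(aw')'=-\mu$, conclude $aw'$ is represented by a $BV$ function $m$, and deduce $(aw)'=m+a'w\in L^1$, hence $aw\in W^{1,1}$ and $w\in W^{1,1}$ --- and this route even bypasses the paper's bootstrap through $u\in C^2$. But you must insert that intermediate step: the implication ``$aw'=m$ hence $w'=m/a$'' is not valid until $w'$ is known to be a function.
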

\begin{proof} 
By \eqref{eq:solELeq} computing $\frac{d}{dx} ( \dot{G}(u') \phi' )$  with the chain rule we infer
 \begin{equation}\label{eq:deriv}
 - \int_0^1 w \dot{G}(u') \phi'' \dx = \int_0^1 \Ddot{G}(u') u''  w \phi' \dx + \int \phi \; \mathrm{d}\mu ,
 \end{equation}
for all $\phi \in W^{2,p}(0,1) \cap W_0^{1,p}(0,1)$. Since $u \in W^{2,p} \hookrightarrow C^1([0,1])$, $w \in L^{\frac{p}{p-1}}$ and $G \in C^{\infty}$, the function $f := \Ddot{G} (u') u'' w$ belongs to $L^1(0,1)$. 
With the introduced notation we infer 
 \begin{equation}
     -\int_0^1 w \dot{G}(u')  \phi'' \dx  = \int_0^1 f \phi' \dx + \int \phi \; \mathrm{d}\mu \, ,
 \end{equation}
 for all $\phi \in W^{2,p}(0,1) \cap W_0^{1,p}(0,1)$. 
 We can now use Fubini's theorem to rewrite the measure term in a convenient way. For  $\phi \in W^{2,p}(0,1) \cap W_0^{1,p}(0,1)$ one has
 \begin{equation}\label{eq:mufubini}
     \int_0^1 \phi \; \mathrm{d}\mu = \int_0^1 \int_0^x \phi' \; \mathrm{d}\mathcal{L}^1 \; \mathrm{d}\mu = \int_0^1 \mu((x,1)) \phi'(x) \; \mathrm{d}\mathcal{L}^1(x) = \int_0^1 g \phi' \dx,
 \end{equation}
where $g(x)  = \mu((x,1))$ a.e., i.e. $g \in L^\infty(0,1)$. Using this one finds 
\begin{equation}\label{eq:arbitest}
    - \int_0^1 w \dot{G}(u')  \phi'' \dx = \int_0^1 (f+g) \phi' \dx \,  \quad \textrm{ for all $\phi \in W^{2,p}(0,1) \cap W_0^{1,p}(0,1)$. }
\end{equation}
 Using test functions of the form 
 \begin{equation}
     \phi(x) = \int_0^x \xi(s) \; \mathrm{d}s - x \int_0^1 \xi(s) \; \mathrm{d}s,
 \end{equation}
 for arbitrary $\xi \in C_0^\infty((0,1))$ one infers that there exists $C \in \mathbb{R}$ such that for each $\xi \in C_0^\infty((0,1))$ one has 
 \begin{equation}
     - \int_0^1 (w \dot{G}(u')) \xi' \dx  = \int_0^1 (f+g+C) \xi \dx,
 \end{equation}
 whereupon one concludes that $w \dot{G}(u') \in W^{1,1}(0,1)$ and $(w \dot{G}(u'))' = f + g+ C$. By the fact that in one dimension $W^{1,1} \subset C^0$ we find that $w \dot{G}(u') \in C^0([0,1])$. In particular, since $\frac{1}{\dot{G}\circ u'}$ is continuous (as $\dot{G}> 0$) one has that $w$ is continuous. 
 From \eqref{gEs} and the fact that $z \mapsto |z|^{p-2}z$ has a continuous inverse function it follows that also $u''$ is continuous, i.e. $u \in C^2([0,1])$.
 Recalling the definition of $f$ we find that $f \in C^0([0,1])$. With this knowledge we infer from $(w \dot{G}(u'))' = f+ g+ C$ that $w \dot{G}(u') \in W^{1,\infty}(0,1)$. Using that $\frac{1}{\dot{G} \circ u'} \in W^{1,\infty}$ (as $\dot{G} > 0$ and $u'' \in L^\infty$) we infer from the Banach algebra property of $W^{1,\infty}$ that $w \in W^{1,\infty}$.
 This information given, we may again look at \eqref{eq:arbitest} for arbitrary $\phi \in W^{2,p}(0,1) \cap W_0^{1,p}(0,1)$ and integrate by parts to find \begin{equation}
    - \left[ w \dot{G}(u') \phi' \right]_0^1 + \int_0^1 (w\dot{G}(u'))' \phi' \dx  = \int_0^1 (f+g) \phi' \dx.
 \end{equation}
 Using again $(w\dot{G}(u'))' = f + g + C$ we find
 \begin{equation}
      - \left[ w \dot{G}(u') \phi' \right]_0^1 = \int_0^1 C \phi' \dx = 0 \quad \forall \phi \in W^{2,p}(0,1)\cap W_0^{1,p}(0,1). 
 \end{equation}
 Choosing suitable test functions $\phi \in W^{2,p}(0,1) \cap W_0^{1,p}(0,1)$ such that $\phi'(0) \neq 0, \phi'(1) = 0$ and respectively $\phi'(1) \neq 0,\phi'(0) = 0$ we infer that $w(0) \dot{G}(u'(0)) = w(1) \dot{G}(u'(1)) = 0$, which implies by Assumption \ref{ass:G} that $w(0) = w(1) = 0$. Thereupon \eqref{gEs} implies that $u''(0) = u''(1)= 0$. Now integration by parts in \eqref{eq:solELeq} and \eqref{eq:mufubini} imply (with $a(x):= \dot{G}(u'(x))$)
 \begin{equation}
     \int_0^1 (a(x) w'(x) - \mu((x,1))) \phi'(x) \dx = 0 \, \quad  \textrm{for all $\phi \in C_0^\infty(0,1).$}
 \end{equation}
   By the fundamental lemma of calculus of variations we infer that there exists a constant $C \in \mathbb{R}$ such that 
 \begin{equation}\label{eq:ding}
     a(x) w'(x) - \mu((x,1)) = C \quad \mbox{ a.e. in } (0,1). 
  \end{equation}
 Next we define $m(x) := \mu((x,1)) + C$ for $x \in (0,1)$. This is clearly bounded since $\mu$ is a finite measure and also nonincreasing. Moreover, since $\mu( \{ u > \psi \}) = 0$, it is also locally constant on $\{ u > \psi \}$. Rearranging \eqref{eq:ding} implies that $a w' = m$ almost everywhere. 
\end{proof}
\begin{remark}\label{rem:BV}
Equation \eqref{eq:aw'=m} implies that $w' = \frac{1}{a} m \in BV(0,1)$, since $m$ is $BV$ as monotone bounded function and  $\frac{1}{a} = \frac{1}{\dot{G} \circ u'} \in C^1([0,1])$.
\end{remark}
From these regularity properties of $w$ one can already infer some basic properties of critical points. 
\begin{cor}[Concavity]\label{cor:concavity}
Let $u \in M(\psi)$ be a critical point in the sense of Definition \ref{def:critical}. Then $u$ is concave.
\end{cor}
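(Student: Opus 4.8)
The plan is to deduce concavity of $u$ directly from the structure established in Proposition~\ref{prop:reguw}, namely from the identity
\begin{equation}
a(x) w'(x) = m(x) \quad \text{for a.e. } x \in (0,1),
\end{equation}
together with the monotonicity of $m$ and the sign information built into Euler's substitution \eqref{gEs}. First I would recall that $w \in W^{1,\infty}(0,1) \subset C^0([0,1])$ with $w(0) = w(1) = 0$, and that $a = \dot{G}\circ u' > 0$ is continuous by Assumption~\ref{ass:G}. Since $m$ is nonincreasing and $a > 0$, the product $a w' = m$ changes sign at most once, and can only change sign from $+$ to $-$; equivalently, $w'$ has the same sign behaviour as $m$.

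Next I would argue that this single sign change, combined with the boundary conditions $w(0) = w(1) = 0$, forces $w \ge 0$ on $[0,1]$. Indeed, if $m \equiv 0$ then $w$ is constant, hence identically zero; otherwise $w$ is nondecreasing on an initial interval $[0,t_*]$ and nonincreasing on $[t_*,1]$ for some $t_* \in [0,1]$ (a "unimodal" profile), so $w(x) \ge \min\{w(0), w(1)\} = 0$ for all $x$. Thus $w \ge 0$ on $[0,1]$.

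Finally I would translate $w \ge 0$ into $u'' \le 0$ via \eqref{gEs}: since
\begin{equation}
w(x) = -p\, \dot{G}(u'(x))^{p-1} |u''(x)|^{p-2} u''(x),
\end{equation}
and the prefactor $p\,\dot{G}(u'(x))^{p-1} > 0$ while $z \mapsto |z|^{p-2}z$ is an increasing odd bijection of $\mathbb{R}$, the sign of $w(x)$ is exactly the opposite of the sign of $u''(x)$. Hence $w \ge 0$ gives $u'' \le 0$ a.e., and since $u \in C^2([0,1])$ this means $u$ is concave, as claimed.

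The only genuinely delicate point is the sign deduction "$a w' = m$ nonincreasing and $w(0)=w(1)=0$ imply $w \ge 0$": one must be careful that $w'$ need not be continuous (it lies only in $BV$, cf. Remark~\ref{rem:BV}), so the monotonicity of $w$ on the two subintervals should be phrased in the integrated form $w(y) - w(x) = \int_x^y \frac{m}{a}\,dt$ and the sign of $m$ used there. Everything else is routine. Alternatively, and perhaps more cleanly, one can avoid discussing $w'$ pointwise altogether: test the weak formulation \eqref{eq:elliptic} (or the equivalent inequality $D\mathcal{E}(u)(\phi) \ge 0$ for admissible $\phi \ge 0$) against suitable nonnegative test functions to show $\int_0^1 a w' \phi' \ge 0$ has the right sign, but the monotone-function argument above is the shortest route given what has already been proved.
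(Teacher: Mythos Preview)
Your proof is correct and follows essentially the same approach as the paper: use the identity $a w' = m$ from Proposition~\ref{prop:reguw} together with the monotonicity of $m$ and $a>0$ to deduce that $w$ is unimodal (nondecreasing then nonincreasing), combine this with $w(0)=w(1)=0$ to get $w\ge 0$, and then read off $u''\le 0$ from the sign relation in \eqref{gEs}. Your additional remarks about handling $w'\in BV$ via the integrated form are a nice touch of care, but the paper's argument is otherwise identical.
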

\begin{proof}
Since $\mathrm{sgn}(u'') = -\mathrm{sgn}(w)$ it suffices to show that 
$w \geq 0$  on $(0,1)$ with $w=w_u$ being Euler's substitution. Let $a,m$ be as in  the previous proposition. Then 
\begin{equation}
    a(x) w'(x) = m(x) \quad \mbox{ a.e. in }(0,1).
\end{equation}
Since $a > 0$ one has $\mathrm{sgn}(m)= \mathrm{sgn}(w')$ a.e.. Since $m$ is nonincreasing there must exist a (not necessarily unique) $c \in [0,1]$ such that $w' \geq 0$ a.e. on $(0,c]$ and $w' \leq 0$ a.e. on $(c,1)$. In particular $w$ is nondecreasing on $[0,c]$ and nonincreasing on $[c,1]$. This monotonicity behavior and $w(0) = w(1) = 0$ imply immediately that $w \geq 0$. 
\end{proof}
\begin{remark}
An important consequence of the concavity of $u$ is that  Euler's substitution $w$ 
can be rewritten as 
\begin{equation}\label{eq:defww}
    w(x) = p \dot{G}(u'(x))^{p-1} (-u''(x))^{p-1}. 
\end{equation}
\end{remark}

\begin{cor}[Regularity away from the obstacle]\label{cor:regunonc}
Let $u \in M(\psi)$ be a critical point in the sense of Definition \ref{def:critical} and let $a= \dot{G} \circ u'$ as in Proposition \ref{prop:reguw}. Then $w \in C^2(\{ u > \psi \})$ and there holds (in the classical sense)
\begin{equation}\label{eq:elli}
    \frac{d}{dx}  \left( a(x) \frac{d}{dx} w(x) \right) = 0 \quad \forall x \in \{ u > \psi \}.
\end{equation}
\end{cor}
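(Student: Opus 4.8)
The goal is to upgrade the measure-valued identity \eqref{eq:elliptic} to a genuine classical second-order ODE for $w$ on the open set $\{u > \psi\}$, using that the coincidence measure $\mu$ charges only $\{u = \psi\}$. The plan is as follows. Fix an arbitrary point $x_* \in \{u > \psi\}$; since $\{u>\psi\}$ is open in $(0,1)$, choose an open interval $I = (x_*-\delta, x_*+\delta) \subset\subset \{u>\psi\}$. By Remark~\ref{rem:coinci}, $\mathrm{spt}(\mu) \subset \{u=\psi\}$, so $\mu(I) = 0$, and hence for every $\phi \in C_0^\infty(I)$ the right-hand side of \eqref{eq:elliptic} vanishes, giving
\begin{equation}
\int_I a(x) w'(x) \phi'(x) \dx = 0 \quad \forall \phi \in C_0^\infty(I).
\end{equation}
This says $a w'$ is weakly constant on $I$, i.e. there is a constant $c_I$ with $a(x) w'(x) = c_I$ for a.e. $x \in I$. (Equivalently, one may invoke \eqref{eq:aw'=m}: the nonincreasing function $m$ is locally constant on $\{u>\psi\}$, so $a w' = m$ is constant on $I$.)

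Next I would bootstrap regularity on $I$. By Proposition~\ref{prop:reguw}, $w \in W^{1,\infty}(0,1)$ and $u \in C^2([0,1])$, so $a = \dot G \circ u' \in C^1([0,1])$ with $a > 0$ (Assumption~\ref{ass:G}); thus $1/a \in C^1([0,1])$. From $a w' = c_I$ on $I$ we get $w' = c_I / a \in C^1(I)$, hence $w \in C^2(I)$, and differentiating classically,
\begin{equation}
\frac{d}{dx}\left( a(x) \frac{d}{dx} w(x) \right) = \frac{d}{dx} c_I = 0 \quad \text{for all } x \in I.
\end{equation}
Since $x_* \in \{u>\psi\}$ was arbitrary, this proves $w \in C^2(\{u>\psi\})$ and that \eqref{eq:elli} holds in the classical sense on all of $\{u>\psi\}$.

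There is essentially no hard step here — the work is purely bookkeeping. The one point that deserves a word of care is the passage from the measure-valued identity \eqref{eq:elliptic}, stated for test functions in $W^{2,p}(0,1)\cap W_0^{1,p}(0,1)$, to the statement for $\phi \in C_0^\infty(I)$: this is immediate since $C_0^\infty(I) \subset C_0^\infty(0,1) \subset W^{2,p}(0,1)\cap W_0^{1,p}(0,1)$ and such $\phi$ annihilate $\mu$ because $\mathrm{spt}(\mu)\cap I = \emptyset$. The only mild subtlety is that $\{u>\psi\}$ is open as a subset of $(0,1)$ but need not be connected, so $c_I$ genuinely depends on the component; this is harmless for a local statement like \eqref{eq:elli}, and is in any case already encoded in the phrase ``locally constant on $\{u>\psi\}$'' in Proposition~\ref{prop:reguw}. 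Alternatively, the whole corollary can be obtained in one line by combining \eqref{eq:aw'=m} with the regularity $a \in C^1$, $a>0$: locally $w' = m/a$ with $m$ locally constant, so $w'\in C^1_{\mathrm{loc}}(\{u>\psi\})$ and $(aw')' = m' = 0$ there.
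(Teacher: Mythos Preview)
Your proof is correct and essentially identical to the paper's. In fact, the ``alternative'' one-line argument you sketch at the end --- combine \eqref{eq:aw'=m} with $a\in C^1$, $a>0$ to get $w' = m/a \in C^1_{\mathrm{loc}}(\{u>\psi\})$ and hence $(aw')'=0$ there --- is exactly the paper's proof; your main argument is a slightly more self-contained rederivation of the same local constancy directly from \eqref{eq:elliptic}.
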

\begin{proof}
By the previous corollary and the chain rule $a \in C^1([0,1])$ and $a > 0$. Moreover Proposition \ref{prop:reguw} yields that $w' = \frac{m}{a}$ where $m$ is locally constant on $\{ u > \psi \}$. In particular, the chain rule then implies that $w' \in C^1(\{ u > \psi \})$ and hence  
$w \in C^2(\{ u > \psi \})$.  Equation \eqref{eq:elli} follows directly from \eqref{eq:elliptic} and the fact that $\mathrm{spt}(\mu) \cap \{ u >  \psi \} = \emptyset$. 
\end{proof}

We have now already inferred an optimal(!) regularity result for $w$, which is $w \in W^{1,\infty}(0,1)$. (Optimality can for example be seen with \cite[Remark 3.7]{Moist} for $p=2$).  
The rest of this section is devoted to the optimal regularity of $u$. Notice that the regularity of $u$ and the regularity of $w$ are two different questions. Indeed, $u''$ and $w$ are related via \eqref{eq:defww}, which means that  a $p-1$'st root has to be taken in order to retrieve $u''$ from $w$. This $p-1$'st root imposes regularity issues on $\{ u''=0 \}$, at least when $p > 2$ (since then the $p-1$'st root is not locally Lipschitz in $\R$). This explains that in order to find the optimal regularity, we have to pay special attention to the set $\{ u'' = 0\}$ (a \emph{flat core}), where a \emph{degeneracy phenomenon} might happen. 
\subsection{Nondegeneracy in the interior} \label{section:nondege-cp}
An obstruction to the regularity of the problem is imposed by 
the phenomenon of \emph{flat cores}. This means that the solution may contain \emph{line segments}. 

While flat cores will  generally occur for critical points with fixed length, cf. \cite{Watanabe}, we  will argue in this section that they will not occur for our critical points \emph{without fixed length assumption}. Understanding (non)degeneracy of minimizers with fixed length will be subject to future research. 

The basis of our argument is again the 
critical point equation satisfied by  
Euler's substitution.

\begin{prop}[Nondegeneracy] \label{prop:deg}
Let $u \in M(\psi)$ be a critical point in the sense of Definition \ref{def:critical}. Then $u$ is nondegenerate, i.e. $\{ u'' = 0 \} = \{ 0 ,1 \}$.  In particular $u'' < 0$ on $(0,1)$. 
\end{prop}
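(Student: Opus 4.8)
The plan is to exploit the measure-valued elliptic equation satisfied by Euler's substitution $w = w_u$, established in Proposition \ref{prop:reguw}, together with the strong maximum principle. Recall from Corollary \ref{cor:concavity} that $w \geq 0$ on $(0,1)$ and $w(0) = w(1) = 0$, and that $\mathrm{sgn}(u'') = -\mathrm{sgn}(w)$; moreover by \eqref{eq:defww} we have $w = p\,\dot{G}(u')^{p-1}(-u'')^{p-1}$, so the set $\{u'' = 0\}$ in $(0,1)$ coincides exactly with the interior zero set $\{w = 0\} \cap (0,1)$. Thus it suffices to prove that $w > 0$ on all of $(0,1)$.

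First I would record that on the open set $\{u > \psi\}$ the function $w$ is a classical solution of the linear elliptic equation $\frac{d}{dx}(a(x)w'(x)) = 0$ with $a = \dot{G}\circ u' \in C^1([0,1])$ and $a > 0$ (Corollary \ref{cor:regunonc}). On such intervals a nonnegative solution that vanishes at an interior point must vanish identically by the strong maximum principle (or, more elementarily here: $aw' \equiv \mathrm{const}$, so $w$ is monotone between consecutive points of $\{u > \psi\}$, and a nonnegative monotone function attaining $0$ at an interior point of its monotonicity interval is constant near that point). The key structural input from Proposition \ref{prop:reguw} is the representation $a(x)w'(x) = m(x)$ with $m$ nonincreasing and bounded, and that $m$ is \emph{locally constant} on $\{u > \psi\}$.

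The main obstacle, and the heart of the argument, is to rule out that $w$ vanishes on (or touches zero within) the coincidence set $\{u = \psi\}$. Here one uses Assumption \ref{ass:psi}: there is $x_0 \in (0,1)$ with $\psi(x_0) > 0 > \psi(0), \psi(1)$, while $u(0) = u(1) = 0$; since $u$ is concave (Corollary \ref{cor:concavity}) and $u \geq \psi$, the coincidence set is a compact subset of $(0,1)$ that is separated from the endpoints, and $u$ cannot be affine on a neighborhood of any coincidence point without violating $u(x_0) = \psi(x_0) > 0$ together with the boundary values — more precisely, if $w \equiv 0$ on some maximal open interval $(\alpha,\beta) \subset (0,1)$, then $u$ is affine there; I would then argue that this affine piece, extended using concavity and the constraint $u \geq \psi$ past the endpoints of $(0,1)$, forces a contradiction with $\psi(x_0) > 0$. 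I expect the cleanest route is: suppose for contradiction $\{w = 0\} \cap (0,1) \neq \emptyset$. Let $(\alpha, \beta)$ be a connected component of the open set $\{w > 0\} \cap (0,1)$; if $\alpha > 0$ then $w(\alpha) = 0$, so $u''(\alpha) = 0$, and $u$ is affine on $[\alpha, \beta']$ for a nearby $\beta'$ — but then the monotonicity of $m$ and its local constancy on $\{u>\psi\}$, combined with $w(0)=w(1)=0$ and the shape of $m$ (nonincreasing, so $w$ first nondecreasing then nonincreasing), show that $w$ can only vanish at $0$ and $1$ unless $w$ is identically zero near a coincidence point. Finally, $w \equiv 0$ near a coincidence point $\bar x$ means $u$ is affine near $\bar x$ with $u(\bar x) = \psi(\bar x)$; using that $u$ is concave on $[0,1]$, $u(0), u(1) = 0$, and $u \geq \psi \geq$ (the cone/obstacle with $\psi(x_0) > 0$), one derives that the affine tangent line to $u$ at $\bar x$ lies above $x_0$ at height $\geq \psi(x_0) > 0$ yet must stay $\leq 0$ at the endpoints by concavity — a contradiction. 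Hence $w > 0$ on $(0,1)$, giving $u'' < 0$ there and $\{u'' = 0\} = \{0,1\}$.
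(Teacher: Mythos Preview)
You correctly identify the right framework: reduce to showing $w>0$ on $(0,1)$, use $a(x)w'(x)=m(x)$ with $m$ nonincreasing, and deduce that $w$ is first nondecreasing then nonincreasing, so that $\{w=0\}=[0,\alpha_1]\cup[\alpha_2,1]$. The gap is in how you rule out $\alpha_1>0$ (and $\alpha_2<1$). Your argument splits into a dichotomy (``$w$ vanishes only at $0,1$ unless $w$ is identically zero near a coincidence point'') that you do not justify, and your treatment of the second case --- an affine piece of $u$ near a coincidence point $\bar x$ leading to a contradiction via the tangent line --- does not work. A concave function $u$ with $u(0)=u(1)=0$ can perfectly well be affine on an interval $[0,\gamma]$ with positive slope and touch $\psi$ at $\gamma$; the tangent line is \emph{positive} at $x=1$, not $\leq 0$, so there is no contradiction from the geometry alone. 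You also confuse components of $\{w>0\}$ with components of $\{w=0\}$ when you write ``let $(\alpha,\beta)$ be a connected component of $\{w>0\}$ \ldots\ and $u$ is affine on $[\alpha,\beta']$''.

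The missing step is much simpler and is global rather than local. Suppose $\alpha_1>0$. Then $w\equiv 0$ on $(0,\alpha_1)$, hence $w'\equiv 0$ there, hence $m=a w'\equiv 0$ on $(0,\alpha_1)$. Since $m$ is nonincreasing on all of $(0,1)$, this forces $m\leq 0$ on $(0,1)$, i.e.\ $w'=m/a\leq 0$ a.e.\ on $(0,1)$. Thus $w$ is nonincreasing on the whole interval, and together with $w(0)=w(1)=0$ this gives $w\equiv 0$, i.e.\ $u''\equiv 0$, i.e.\ $u\equiv 0$ --- contradicting $u(x_0)\geq\psi(x_0)>0$ from Assumption~\ref{ass:psi}. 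The case $\alpha_2<1$ is symmetric. No analysis of the coincidence set, strong maximum principle on subintervals, or affine-tangent geometry is needed.
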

\begin{proof}
Let $u \in M(\psi)$ be a critical point and $w$ be its Euler's substitution. It is sufficient to prove that $\{w=0\}=\{0,1\}$. Let $m : (0,1) \rightarrow \mathbb{R}$ be the function from Proposition \ref{prop:reguw}. Arguing as in the proof of Corollary \ref{cor:concavity} 
we see that there must exist 
$c \in [0,1]$ such that 
$w' \geq 0$ a.e. on $[0,c]$ and $w' \leq 0$ a.e. on $[c,1]$. 
In particular, $w$ can be decomposed into two monotone parts. Since $w(0) = w(1) = 0$ we conclude from this that $\{ w = 0 \} = [0,\alpha_1] \cup [\alpha_2,1]$ for some $\alpha_1 \geq 0$ and $\alpha_2 \leq 1$. This is due to the elementary fact that monotone functions may attain a certain value only at a point or an interval. It remains to show that $\alpha_1 = 0$ and $\alpha_2 = 1$. For a contradiction assume that $\alpha_1 > 0$. Then in particular $w \equiv 0$ on $(0, \alpha_1)$ and therefore one also has $w' \equiv 0$ on $(0,\alpha_1)$. Notice that then 
\begin{equation}
    m(x) = a(x) w'(x) =  0 \mbox{ in } (0,\alpha_1). 
\end{equation}
Since $m$ is nonincreasing on $(0,1)$ one infers from this that $m \leq 0$ on $(0,1)$. In particular one has 
\begin{equation}
    w'(x) = \frac{m(x)}{a(x)} \leq 0 \quad a.e. \; x \in (0,1), 
\end{equation}
since $a= \dot{G} \circ u'>0$. Hence $w$ is nonincreasing. This and $w(0) = w(1) = 0$ however imply that $w \equiv 0$ on $[0,1]$. As a consequence one has $u'' \equiv 0$ on $[0,1]$. A contradiction to Assumption \ref{ass:psi} since this and $u(0) = u(1) = 0$ imply $u \equiv 0$. We infer that $\alpha_1 = 0$. Analogously one can show that $\alpha_2 = 1$ (the only difference is that in the case of $\alpha_2 < 1$, $w$ turns out to be nondecreasing instead of nonincreasing). 
\end{proof}

This nondegeneracy will allow us to take the mentioned $p-1$'st root of $w$ in $(0,1)$ without losing any differentiablity.  This can be used to prove many regularity statements and also further structural results, e.g. results about the \emph{coincidence set}.

An immediate consequence is e.g. the following local regularity result away from the obstacle.

\begin{cor}[Interior regularity away from the obstacle]\label{cor:reguaway}
Let $u \in M(\psi)$ be a critical point in the sense of Definition \ref{def:critical}. Then $u \in C^\infty((0,1) \cap \{ u > \psi \})$.
\end{cor}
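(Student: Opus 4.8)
The plan is to bootstrap the regularity of $w$ on the open set $\{u>\psi\}$ and then transfer it to $u$ using the nondegeneracy from Proposition~\ref{prop:deg}. By Corollary~\ref{cor:regunonc} we already know that $w$ solves $\frac{d}{dx}(a(x) w'(x)) = 0$ classically on $\Omega := (0,1)\cap\{u>\psi\}$, where $a = \dot G\circ u'$. The first step is to upgrade the coefficient: write $u' = \dot G^{-1}(\text{something involving }w^{1/(p-1)})$ — more precisely, since $u'' < 0$ on $(0,1)$ by Proposition~\ref{prop:deg} and $w = p\,\dot G(u')^{p-1}(-u'')^{p-1}$ by \eqref{eq:defww}, on the open set $(0,1)$ the map $z\mapsto z^{1/(p-1)}$ is smooth away from $0$ and $w>0$ on $(0,1)$, so $u'' = -\frac{1}{\dot G(u')}\bigl(\frac{w}{p}\bigr)^{1/(p-1)}$ expresses $u''$ as a smooth function of $(u', w)$ wherever $w>0$.

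The core of the argument is a standard elliptic/ODE bootstrap on $\Omega$. From $a w' = m$ with $m$ locally constant on $\{u>\psi\}\supset\Omega$ (Proposition~\ref{prop:reguw}), we get $w' = m/a$ on $\Omega$, so the regularity of $w'$ is exactly that of $1/a = 1/(\dot G\circ u')$, hence that of $u'$. I would set up the bootstrap as follows: suppose inductively that $u \in C^{k}(\Omega)$ for some $k\ge 2$. Then $u' \in C^{k-1}$, so $a \in C^{k-1}$ (as $\dot G$ is smooth), so $w' = m/a \in C^{k-1}(\Omega)$, hence $w \in C^{k}(\Omega)$ and $w$ stays positive. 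Now use $u'' = -\frac{1}{\dot G(u')}(w/p)^{1/(p-1)}$: since $w>0$, the right-hand side is a composition of smooth maps applied to $u' \in C^{k-1}$ and $w \in C^{k}$, so $u'' \in C^{k-1}(\Omega)$, i.e. $u \in C^{k+1}(\Omega)$. Starting from $u \in C^2(\Omega)$ (which we have globally from Proposition~\ref{prop:reguw}), the induction gives $u \in C^k(\Omega)$ for all $k$, i.e. $u \in C^\infty(\Omega)$.

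The one point requiring care — and the only genuine obstacle — is ensuring the $(p-1)$-st root never sees the value $0$ on $\Omega$: this is precisely what Proposition~\ref{prop:deg} buys us, since it gives $u'' < 0$, equivalently $w > 0$, on all of $(0,1)$, and in particular on $\Omega \subset (0,1)$. Without nondegeneracy the map $z \mapsto z^{1/(p-1)}$ would fail to be $C^\infty$ (for $p>2$ not even $C^1$) at points where $w$ vanishes, and the bootstrap would stall. With $w$ bounded away from $0$ on compact subsets of $\Omega$ and smoothness of $z\mapsto z^{1/(p-1)}$ on $(0,\infty)$, every step of the induction is a routine application of the chain rule and the smoothness of $G$. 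Hence $u \in C^\infty(\Omega) = C^\infty\bigl((0,1)\cap\{u>\psi\}\bigr)$, as claimed.
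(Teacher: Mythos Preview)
Your proof is correct and follows essentially the same approach as the paper: both use the nondegeneracy $w>0$ on $(0,1)$ from Proposition~\ref{prop:deg} together with the local constancy of $m$ in $aw'=m$ on $\{u>\psi\}$ to bootstrap, expressing $u''$ as a smooth function of $(u',w)$ via the $(p-1)$-st root. The only cosmetic difference is that the paper first reaches $C^4$ directly and then derives an explicit formula for $u''''$ to continue the bootstrap, whereas your inductive scheme $u\in C^k\Rightarrow w\in C^k\Rightarrow u\in C^{k+1}$ is slightly more streamlined; the underlying mechanism is identical.
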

\begin{proof}
By Corollary \ref{cor:regunonc} we have (cf. \eqref{eq:defww}) that $w = p \dot{G}(u')^{p-1}(-u'')^{p-1} \in C^2(\{ u > \psi \})$. Using that by Proposition \ref{prop:deg} $-u'' > 0$ on $(0,1)$
and taking the $p-1$'st root we find 
\begin{equation}
    \dot{G}(u') u'' =- \big(  \tfrac{1}{p} w\big)^\frac{1}{p-1} \in C^2((0,1) \cap \{u > \psi\}).
\end{equation}
We conclude $\frac{d}{dx} (G \circ u') \in C^2((0,1) \cap \{ u > \psi \})$ and therefore $G \circ u' \in C^3((0,1) \cap \{ u > \psi \})$. Since $G \in C^\infty$ and $\dot{G}>0$ we infer that $G$ is a local $C^3$-diffeomorphism and thus $u' \in C^3((0,1) \cap \{ u > \psi \})$. From this follows immediately that $u \in C^4((0,1) \cap \{ u > \psi \})$. This (and the nondegeneracy) at hand allow us to bootstrap further. For each $x \in \{u > \psi\}$ one has (cf. Proposition \ref{prop:reguw}) that $(\dot{G}\circ u') w' = \mathrm{const}.$ in a neighborhood of $x$. We use this information to derive an equation for the fourth order derivatives of $u$ that can be written as
\begin{equation}
    u''''(x) = \frac{F(u'(x),u''(x),u'''(x))}{\dot{G}(u'(x))^\alpha (-u''(x))^{\beta}}, 
\end{equation}
where $\alpha,\beta \in \mathbb{R}$ and $F : \mathbb{R}^3 \rightarrow \mathbb{R}$ is a smooth function. This formula and the fact that $\dot{G} > 0$ and $-u''> 0$ (cf. Proposition \ref{prop:deg}) make a bootstrapping argument possible, whereupon we conclude that $u \in C^\infty((0,1) \cap \{ u > \psi \})$. 
\end{proof}

\subsection{The coincidence set}\label{sec:coinset}

In this section we discuss some properties of the \emph{coincidence set} $\{ u= \psi \}$. Understanding the coincidence set is important for the regularity discussion, since it is a set where regularity can potentially be lost. In light of the interior nondegeneracy result of the previous section, coincidence with $\psi$ is actually the only phenomenon that can obstruct interior regularity.
\begin{prop}[Nonempty coincidence set]\label{prop:nonempty} Let $u \in M(\psi)$ be a critical point in the sense of Definition \ref{def:critical}. Then
$\{ u = \psi \} \neq \emptyset$. 
\end{prop}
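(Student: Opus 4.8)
The plan is to argue by contradiction: suppose $u \in M(\psi)$ is a critical point with $\{u=\psi\}=\emptyset$. Since $\psi$ is continuous on the compact interval $[0,1]$ and $u$ is continuous with $u > \psi$ everywhere, the gap $u-\psi$ attains a positive minimum, so in fact $\{u > \psi\} = (0,1)$ (recall $\psi(0),\psi(1) < 0 = u(0)=u(1)$). Consequently the coincidence measure $\mu$ from Definition~\ref{def:critical} is supported on the empty set, hence $\mu \equiv 0$, and Euler's substitution $w=w_u$ solves the homogeneous equation
\begin{equation}
    -\int_0^1 w(x) \frac{d}{dx}\left( \dot{G}(u'(x)) \phi'(x) \right) \dx = 0 \quad \forall \phi \in W^{2,p}(0,1)\cap W_0^{1,p}(0,1).
\end{equation}

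Next I would feed this into the structural results already proved. By Proposition~\ref{prop:reguw} there is a nonincreasing bounded $m$ with $a w' = m$ a.e., where $a = \dot G \circ u' > 0$; but with $\mu = 0$ we have $m = \mu((x,1)) + C = C$ constant, so $w' = C/a$ does not change sign. Thus $w$ is monotone on $(0,1)$, and combined with the boundary conditions $w(0)=w(1)=0$ from Proposition~\ref{prop:reguw} this forces $w \equiv 0$ on $[0,1]$. (Alternatively one can invoke Corollary~\ref{cor:reguaway} to see $u \in C^\infty(0,1)$ and solve the classical ODE directly, but the $m$-argument is cleaner.) From $w \equiv 0$ and \eqref{eq:defww} we get $u'' \equiv 0$ on $(0,1)$, so $u$ is affine; together with $u(0)=u(1)=0$ this yields $u \equiv 0$.

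Finally I would derive the contradiction from Assumption~\ref{ass:psi}: since there exists $x_0 \in (0,1)$ with $\psi(x_0) > 0 = u(x_0)$, the constraint $u \ge \psi$ is violated at $x_0$, so $u \equiv 0 \notin M(\psi)$ — contradiction. Hence $\{u=\psi\}\neq\emptyset$. I do not expect any serious obstacle here: every ingredient (boundary values of $w$, the representation $aw'=m$, the concavity rewriting \eqref{eq:defww}) is already available from Proposition~\ref{prop:reguw} and Corollary~\ref{cor:concavity}, and the only point requiring a line of care is checking that an empty coincidence set really forces $\mu = 0$ (immediate, since a Radon measure supported on $\emptyset$ is the zero measure) and that the resulting monotone $w$ with zero boundary values must vanish identically.
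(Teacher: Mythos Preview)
Your proof is correct and follows essentially the same route as the paper: assume the coincidence set is empty so that $\mu=0$, deduce $w\equiv 0$ from the elliptic structure together with $w(0)=w(1)=0$, conclude $u''\equiv 0$ and hence $u\equiv 0$, and reach a contradiction with Assumption~\ref{ass:psi}. The only cosmetic difference is that the paper invokes Corollary~\ref{cor:regunonc} and the elliptic maximum principle to get $w\equiv 0$, whereas you use the representation $aw'=m$ from Proposition~\ref{prop:reguw} (with $m$ constant since $\mu=0$) to see directly that $w$ is monotone; these are equivalent one-line arguments for the same ODE.
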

\begin{proof}
Assume that $\{u = \psi\} = \emptyset$, i.e. $\{ u > \psi \} = [0,1]$. We infer by \eqref{eq:elli} and Proposition \ref{prop:reguw} that $w \in C^2((0,1) ) \cap C^0([0,1])$ is a classical solution of 
\begin{equation}\label{eq:boundaryproblemforw}
    \begin{cases}
        \frac{d}{dx} ( a(x) w'(x) ) = 0 & \textrm{ in } (0,1) \\
        w =  0 & \textrm{ on } \partial(0,1) = \{ 0, 1 \},
    \end{cases}
\end{equation}
where $a= \dot{G} \circ u'$. 
The elliptic maximum principle yields that $w \vert_{[0,1]} \equiv 0$. This however implies by \eqref{eq:defww} that $u'' = 0$ on $[0,1]$, i.e. $u$ is a line. Since $u(0)= u(1)= 0$ it follows immediately that $u \equiv 0$. This contradicts Assumption \ref{ass:psi}.
\end{proof}

\begin{remark}
The previous proof points out that the unique critical point \emph{without obstacle} (i.e. with $\mu = 0$) is given by $u \equiv 0$. This observation reveals a uniqueness result for (weak) solutions for the \emph{Navier problem}, i.e. the boundary value problem
\begin{equation}
    \begin{cases}
     \nabla_{L^2} \mathcal{E}(u) = 0 & \textrm{on}  \; (0,1) \\ u(0) = u(1) = 0 & \\
     u''(0) = u''(1) = 0 
    \end{cases}
\end{equation}
is uniquely solved by $u \equiv 0$. Applying this insight to the case of $\mathcal{E}(u) := \mathcal{E}_p(\mathrm{graph}(u))$, i.e. $G = EU_p$, yields 
that the straight line $u \equiv 0$ is the only graphical $p$-elastica that satisfies the Navier boundary conditions $u(0) = u(1) = 0$ and $\kappa_u(0) = \kappa_u(1) = 0$.
The landscape of critical points becomes much richer once one abandons the graph 
condition. Indeed, the equivalent Navier problem for curves
\begin{equation}
\begin{cases}
    \nabla_{L^2(ds)} \mathcal{E}_p(\gamma) = 0  & \textrm{on $(0,L_\gamma)$} \\ \gamma(0) = (0,0)^T , \gamma(L_\gamma) = (0,1)^T  \\ \kappa(0) = \kappa(L_\gamma) = 0 
    \end{cases}
\end{equation}
 has infinitely many solutions, cf. \cite[Theorem 1,(ii),(iv)]{Watanabe}, already for $p=2$ (cf. \cite{KensukeNavier}). This shows that the special geometry of graph curves is advantageous for the Navier problem.
\end{remark}
Another consequence of the nondegeneracy and the $C^2$-regularity is that obstacles can only be touched at \emph{concave points}, in the following sense. 

\begin{prop}[Noncoincidence at convex points]
Let $u \in M(\psi)$ be a critical point in the sense of Definition \ref{def:critical}. Let $x_0 \in (0,1)$ be such that $\psi$ is twice continuously differentiable in a neighborhood of $x_0$ and $\psi''(x_0) \geq 0$. Then $x_0 \not \in \{u = \psi\}$. 
\end{prop}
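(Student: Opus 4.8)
The plan is to argue by contradiction: suppose $x_0 \in \{u = \psi\}$ with $\psi$ of class $C^2$ near $x_0$ and $\psi''(x_0) \geq 0$. Since both $u$ and $\psi$ are $C^2$ near $x_0$ (the former by Proposition~\ref{prop:reguw}) and $u - \psi$ has an interior minimum (namely zero) at $x_0$, the first- and second-order conditions give $u'(x_0) = \psi'(x_0)$ and $u''(x_0) \geq \psi''(x_0) \geq 0$. On the other hand, Proposition~\ref{prop:deg} (nondegeneracy) asserts that $u''(x) < 0$ for all $x \in (0,1)$, in particular $u''(x_0) < 0$. This is a contradiction, and hence $x_0 \notin \{u = \psi\}$.

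The key steps, in order, are: first, invoke the regularity $u \in C^2([0,1])$ from Proposition~\ref{prop:reguw} so that the comparison $u - \psi$ is genuinely twice differentiable in a neighborhood of $x_0$; second, use that $x_0$ is an interior point of $(0,1)$ and that $(u-\psi)(x_0) = 0 = \min_{[0,1]}(u - \psi)$ (since $u \geq \psi$ everywhere) to extract the necessary conditions $(u-\psi)'(x_0) = 0$ and $(u-\psi)''(x_0) \geq 0$; third, combine $(u-\psi)''(x_0) \geq 0$ with the hypothesis $\psi''(x_0) \geq 0$ to get $u''(x_0) \geq 0$; fourth, contradict this with the strict interior nondegeneracy $u'' < 0$ on $(0,1)$ from Proposition~\ref{prop:deg}.

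There is essentially no substantial obstacle here — the statement is a short consequence of two already-established facts, namely the $C^2$-regularity and the strict sign of $u''$ in the interior. The only mild subtlety is to make sure that the second-order necessary condition at an interior local minimum is applied correctly: one should note that $u - \psi \geq 0$ on all of $[0,1]$ with equality at the interior point $x_0$, so $x_0$ is a local (indeed global) minimizer of the $C^2$ function $u - \psi$ restricted to a neighborhood contained in $(0,1)$, which legitimately yields $(u-\psi)''(x_0) \geq 0$. No maximum principle or measure-theoretic machinery is needed for this particular proposition; it is purely a pointwise comparison argument.
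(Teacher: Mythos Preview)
Your proposal is correct and follows essentially the same argument as the paper: assume coincidence at $x_0$, use $C^2$-regularity (Proposition~\ref{prop:reguw}) to apply the second-order necessary condition at the interior minimum of $u-\psi$, deduce $u''(x_0)\geq\psi''(x_0)\geq 0$, and contradict the strict concavity $u''<0$ on $(0,1)$ from Proposition~\ref{prop:deg}.
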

\begin{proof}
Assume that there exists some $x_0 \in (0,1)$ as in the statement and $u(x_0) = \psi(x_0)$. Since $(u-\psi) \geq 0$ one infers that $u- \psi$ attains a (local) minimum at $x_0$. Thereupon and since $(u-\psi) \in C^2$ in a neighborhood of $x_0$, it follows that $(u-\psi)'(x_0) = 0$ and $(u-\psi)''(x_0) \geq 0$. Hence we obtain 
\begin{equation}
    u''(x_0) \geq \psi''(x_0) \geq 0. 
\end{equation}
However, Proposition \ref{prop:deg} implies that $u'' < 0$ on $(0,1)$, which is a contradiction. 
\end{proof}
\begin{remark} \label{rem:symcone}
Obstacles of particular interest are \emph{cone obstacles}, given by functions $\psi \in C^0([0,1])$ such that there exists a $\theta \in (0,1)$ s.t. $\psi\vert_{[0,\theta]}$ and $\psi\vert_{[\theta,1]}$  are \emph{affine linear}. For such obstacle $\psi$ let $u \in M(\psi)$ be a critical point in the sense of Definition~\ref{def:critical}.
By the previous result $\{u = \psi \} \cap (0,1) \setminus \{ \theta \} = \emptyset$, as $\psi''(x_0) \geq 0$ for all $x_0 \in (0,1) \setminus \{\theta \}$. However since $\{ u = \psi \} \neq \emptyset$ (see Proposition \ref{prop:nonempty}) we infer that $\{ u = \psi \} = \{ \theta \}$. 
\end{remark}

\subsection{Optimal regularity for critical points} \label{section:Optimal-reg-cp}

In this section we study the optimal (global) regularity of critical points. Recall that we have two major obstructions to the regularity. The first one is the \emph{obstacle constraint} --- hitting the obstacle will affect the regularity. The second one is the \emph{degeneracy} of the equation when $u'' = 0$. While the latter phenomenon can (by the previous section) only occur at the boundary $\partial (0,1)$, the first phenomenon can only occur in the interior (as $\psi(0),\psi(1) <0$). In this section we investigate which of the two phenomena imposes a harsher restriction on the regularity. The answer to this question will depend on $p$. Indeed, for $p \in (2,\infty)$ the degeneracy plays a more important role, whereas for $p \in (1,2]$
it is the obstacle constraint. 
We remark that in both cases we will experience a substantial loss of regularity compared to the $C^\infty$-regularity  that holds where none of the two obstructions are present, cf. Corollary \ref{cor:reguaway}, that shows regularity away from the obstacle and away from the boundary (where the degeneracy occurs).

\begin{prop}[Case 1: $p\leq 2$]\label{prop:regu1}
Let $u \in M(\psi)$ be a critical point in the sense of Definition \ref{def:critical} and $p \leq 2$. Then $u \in W^{3,\infty}(0,1)$ and $u''' \in BV(0,1)$. This regularity is optimal in the sense that there exist obstacles $\psi_*$ satisfying Assumption \ref{ass:psi}  such that critical points (and even minimizers) in $M(\psi_*)$ can not lie in $C^3(0,1)$.
\end{prop}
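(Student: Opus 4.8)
The plan is to translate the established regularity of Euler's substitution $w$ into regularity for $u$, exploiting that for $p \leq 2$ the $(p-1)$'st root is locally Lipschitz (indeed H\"older-continuous but more importantly not worse than Lipschitz away from zero and at worst $1/(p-1) \geq 1$ near zero, which is fine), so that no degeneracy is lost. From Proposition \ref{prop:reguw} and Remark \ref{rem:BV} we have $w \in W^{1,\infty}(0,1)$ with $w' = m/a \in BV(0,1)$, where $a = \dot G \circ u' \in C^1([0,1])$ and $m$ is monotone and bounded. By the nondegeneracy Proposition \ref{prop:deg} we know $-u'' > 0$ on $(0,1)$ and $u''(0) = u''(1) = 0$, and by concavity (Corollary \ref{cor:concavity}) we may use the form \eqref{eq:defww}, i.e. $w = p\, \dot G(u')^{p-1}(-u'')^{p-1}$.

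First I would solve \eqref{eq:defww} for $-u''$: since $\dot G(u') > 0$ and is $C^1$, we get $-u'' = \dot G(u')^{-1}\bigl(\tfrac{1}{p} w\bigr)^{1/(p-1)}$. The key point is that for $p \leq 2$ one has $\tfrac{1}{p-1} \geq 1$, so the map $t \mapsto t^{1/(p-1)}$ is $C^1$ (in fact Lipschitz on bounded sets, with derivative vanishing appropriately) on $[0,\infty)$; composing the $W^{1,\infty}$ function $w$ with it keeps us in $W^{1,\infty}$, and moreover the chain rule together with $w' \in BV$ and the fact that composition of a $BV$ function with a $C^1$ map of a $W^{1,\infty}$ function stays $BV$ gives $(w^{1/(p-1)})' \in BV$. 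Hence $-u'' \in W^{1,\infty}(0,1)$ with derivative in $BV(0,1)$, which means $u \in W^{3,\infty}(0,1)$ and $u''' \in BV(0,1)$. I would be slightly careful to record why $w^{1/(p-1)} \in W^{1,\infty}$ even though $w$ vanishes at the endpoints: near a zero of $w$ the composed function is still Lipschitz because $w$ itself is Lipschitz and $t^{1/(p-1)}$ has at most Lipschitz growth there when $1/(p-1)\ge 1$; a short estimate $|w(x)^{1/(p-1)} - w(y)^{1/(p-1)}| \le C|w(x)-w(y)| \le C\|w'\|_\infty |x-y|$ on the relevant range handles this.

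For the optimality statement I would exhibit an explicit obstacle $\psi_*$ (a symmetric cone is the natural candidate, in view of Remark \ref{rem:symcone}) for which a minimizer $u$ touches $\psi_*$ exactly at the tip $\theta = 1/2$, and show $u \notin C^3(0,1)$. The mechanism is the jump in $m$ across the coincidence point: since $\mu$ has an atom (or at least positive mass) at $\theta$, the function $m(x) = \mu((x,1)) + C$ has a genuine jump at $\theta$, so $w' = m/a$ jumps there, hence $w \notin C^1$ near $\theta$, hence $-u'' = \dot G(u')^{-1}(w/p)^{1/(p-1)}$ fails to be $C^1$ at $\theta$ (here one uses $-u''(\theta) > 0$ from nondegeneracy, so the root is a local $C^1$-diffeomorphism near the value $w(\theta) > 0$ and the non-smoothness of $w$ transfers directly to $u''$), i.e. $u''' \notin C^0$. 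To make this rigorous I would need to ensure such a minimizer exists and genuinely coincides only at $\theta$ with $\mu(\{\theta\}) > 0$; existence for small cones is provided by the later Theorem \ref{thm:exinotopti} (or can be quoted from the known $p=2$ results and their generalizations), the coincidence-set structure is Remark \ref{rem:symcone}, and the positivity of the atom follows because otherwise $\mu = 0$ on a neighborhood would force $u$ smooth and concave through $\theta$, contradicting $u(\theta) = \psi_*(\theta)$ together with $u \geq \psi_*$ and the corner of $\psi_*$ (a concave $C^2$ function cannot touch the tip of a cone from above without the cone being flat there).

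The main obstacle I expect is the optimality half: producing a concrete obstacle and a concrete critical point for which one can prove $u \notin C^3$, rather than merely arguing it ``should'' happen. The cleanest route is to reduce it entirely to the jump of the coincidence measure at the single contact point: \emph{any} critical point with $\{u=\psi\} = \{\theta\}$ and $\mu(\{\theta\}) > 0$ fails to be $C^3$ at $\theta$ by the argument above, so it suffices to exhibit one such (which, for a symmetric cone of small height, is exactly the minimizer guaranteed later). I would phrase the proof so that the analytic core — jump of $m$ $\Rightarrow$ jump of $w'$ $\Rightarrow$ $u'''$ discontinuous, using that $1/(p-1) \geq 1$ makes the root harmless and $w(\theta)>0$ makes it a diffeomorphism — is stated as a lemma, and the existence input is cited.
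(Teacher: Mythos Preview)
Your $W^{3,\infty}$ argument is correct and matches the paper. The difficulties are in the $BV$ step and in the optimality step.

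\textbf{The $BV$ step has a genuine gap for $3/2<p<2$.} You want $(w^{1/(p-1)})'=\tfrac{1}{p-1}\,w^{(2-p)/(p-1)}\,w'\in BV$. You invoke ``composition of a $BV$ function with a $C^1$ map of a $W^{1,\infty}$ function stays $BV$'', which under any reasonable reading requires $F'(t)=\tfrac{1}{p-1}t^{(2-p)/(p-1)}$ to be at least Lipschitz (so that $F'(w)\in W^{1,\infty}$ and the product with $w'\in BV$ is $BV$). But for $3/2<p<2$ the exponent $(2-p)/(p-1)\in(0,1)$, so $F'$ is only H\"older at $0$, and $w$ \emph{does} vanish at $0,1$; hence $F'(w)$ is merely continuous there, and a continuous function times a $BV$ function need not be $BV$. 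The paper does not try to make $F'(w)$ nice globally; instead it reduces to showing $m\,w^{(2-p)/(p-1)}\in BV$ and proves this by a duality/integration-by-parts argument using $m(x)=\mu((x,1))+C$, which produces the term $\int_0^1 |w|^{(3-2p)/(p-1)}\,dx$ and then establishes its finiteness via the linear behaviour $cx\le w(x)\le Cx$ near the endpoints (possible because $m$ is a nonzero constant there). This endpoint analysis is exactly what is missing from your sketch. (An alternative repair, which you do not mention, is to observe from the monotonicity of $m$ that $w$ is unimodal, hence $w^{(2-p)/(p-1)}\in BV\cap L^\infty$, and then use that $BV\cap L^\infty$ is an algebra.)

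\textbf{The optimality argument: your route works, but your justification of the key input is wrong.} You argue directly: $\mu(\{\theta\})>0\Rightarrow m$ jumps $\Rightarrow w'$ jumps $\Rightarrow u'''$ jumps (using $w(\theta)>0$ so the root is a local diffeomorphism). This is fine \emph{once} $\mu(\{\theta\})>0$ is known. Your justification, however, is the claim that ``a concave $C^2$ function cannot touch the tip of a cone from above''. That is false: take $u(x)=x(1-x)$ and the cone $\psi(x)=\tfrac14-|x-\tfrac12|$; then $u(\tfrac12)=\psi(\tfrac12)$ and $u\ge\psi$. The correct reason $\mu(\{\theta\})>0$ is that $\mathrm{spt}(\mu)\subset\{u=\psi\}=\{\theta\}$, so $\mu(\{\theta\})=0$ would mean $\mu\equiv 0$, and then the elliptic equation for $w$ with $w(0)=w(1)=0$ forces $w\equiv 0$, hence $u\equiv 0$, contradicting $u(\theta)=\psi(\theta)>0$. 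The paper bypasses all of this with a cleaner contradiction: assume $u_*\in C^3$, infer $w_*\in C^1$, so $m_*=a_*w_*'$ is continuous; being locally constant on $(0,\theta)$ and $(\theta,1)$ it is then globally constant, so $w_*'$ has a fixed sign, $w_*$ is monotone with $w_*(0)=w_*(1)=0$, hence $w_*\equiv 0$, contradiction. This avoids any discussion of the atom of $\mu$.
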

\begin{proof}
\textbf{Step 1.} We show $W^{3,\infty}$-regularity.
First notice that by Proposition \ref{prop:reguw} one has that Euler's substitution $w = p \dot{G}(u')^{p-1} (-u'')^{p-1}$ (cf. \eqref{eq:defww}) lies in $W^{1,\infty}(0,1)$. Since $u' \in C^1([0,1])$ one has that $\frac{1}{\dot{G}(u')^{p-1}}$ also lies in $W^{1,\infty}(0,1)$. We infer from the product rule that $v(x) :=  (-u''(x))^{p-1}$ lies in $W^{1,\infty}(0,1)$. Since $F(z) := |z|^{\frac{1}{p-1}}$ is locally Lipschitz in $\mathbb{R}$ (as $p\leq 2$) and $u''$ is bounded we infer that $-u'' = F \circ v$ lies in  $W^{1,\infty}(0,1)$. We infer $u'' \in W^{1,\infty}(0,1)$ and therefore $u \in W^{3,\infty}(0,1)$.\\ 
\textbf{Step 2.} We show the $BV$-regularity of $u'''$.
Proposition~\ref{prop:reguw} and the chain rule imply for $a.e.$ $x \in (0,1)$
\begin{align}
     \frac{m(x)}{\dot{G}(u'(x))}  & = w'(x)= -c_p w(x)^\frac{p-2}{p-1} \left( \Ddot{G}(u'(x)) u''(x)^2 +  \dot{G}(u'(x)) u'''(x) \right),  
\end{align}
with $c_p$ a constant depending only on $p$. 
Rearranging and using that $w > 0$ we find for a.e. $x \in(0,1)$
\begin{equation}\label{eq:u'''}
    u'''(x) =  \frac{1}{\dot{G}(u'(x))} \Bigl( - \frac{m(x)}{c_p \dot{G}(u'(x))} \frac{1}{w(x)^{\frac{p-2}{p-1}}} - \Ddot{G}(u'(x)) u''(x)^2 \Bigr). 
\end{equation}
We conclude from the previous formula and the chain rule in $BV$ that $u''' \in BV(0,1)$ if and only if $m \cdot w^{\frac{2-p}{p-1}} \in BV(0,1)$. To prove the latter, we show that there exists $M> 0$  such that \begin{equation}
    \left\vert \int_0^1 m(x) w(x)^\frac{2-p}{p-1} \phi'(x) \; \mathrm{d}x  \right\vert \leq M ||\phi||_{\infty} \quad \forall \phi \in C_0^\infty(0,1).
\end{equation}
For $p=2$ this is trivial by the $BV$-property of $m$ (as decreasing bounded function). For $p \in (1,2)$ fix $\phi \in C_0^\infty(0,1)$. Since $w > 0$ on $(0,1)$ and $w \in W^{1,\infty}$ we find
\begin{align}
    & \left\vert  \int_0^1 m(x) w(x)^\frac{2-p}{p-1} \phi'(x) \; \mathrm{d}x  \right\vert  \\ & = \left\vert \int_0^1 m(x) \frac{d}{dx}\left( w(x)^\frac{2-p}{p-1} \phi(x) \right) \; \mathrm{d}x  - \int_0^1 \tfrac{2-p}{p-1}m(x)w(x)^\frac{3-2p}{p-1} w'(x) \phi(x) \; \mathrm{d}x  \right\vert.
\end{align}
Since $m(x) =  \mu ((x,1))+C$ (see \eqref{eq:ding}) with a finite measure $\mu$, proceeding as in \eqref{eq:mufubini} in the first integral, we obtain
\begin{equation}
\left\vert \int_0^1 m(x) w(x)^\frac{2-p}{p-1} \phi'(x) \; \mathrm{d}x  \right\vert \leq  \left\vert \int w^{\frac{2-p}{p-1}}  \phi \; \mathrm{d}\mu \right\vert + \left\vert \int_0^1  \frac{2-p}{p-1} \frac{m(x)^2}{\dot{G}(u'(x))}w(x)^\frac{3-2p}{p-1}  \phi(x) \; \mathrm{d}x \right\vert,
\end{equation}
where we used again $w' = \frac{m}{\dot{G}(u')}$ 
from Proposition \ref{prop:reguw}. By boundedness of $m$, $\mu(0,1)< \infty$ and $w \in L^\infty(0,1)$ one infers \begin{equation}\label{eq:BVregu}
    \left\vert \int_0^1 m(x) w(x)^\frac{2-p}{p-1} \phi'(x) \; \mathrm{d}x \right\vert \leq \tilde{M} \left( 1 + \int_0^1 |w(x)|^{\frac{3-2p}{p-1}} \; \mathrm{d}x \right) ||\phi||_\infty,
\end{equation}
for some finite $\tilde{M}> 0$.
It remains to show that the integral on the right hand side is finite. If $p\leq \frac{3}{2}$ this is trivial.
For other values of $p$ we  
let $\delta> 0$ be chosen such that $\{ u = \psi \} \cap [0,\delta) \cup (1-\delta,1]  = \emptyset$.
Since by Proposition \ref{prop:deg} $w> 0$ on $(0,1)$ one has that $|w|^\frac{3-2p}{p-1} \in L^\infty((\delta,1-\delta))$. Therefore it only remains to show integrability on $(0,\delta)$ and on $(1-\delta,1)$. Next we show the integrability on $(0,\delta)$.
 By the local constancy of $m$ on $\{ u > \psi \}$ (cf. Proposition \ref{prop:reguw})  there exists a constant $B_0 \in \mathbb{R}$ such that $m \equiv B_0$ on $(0,\delta)$. We claim that $B_0 \neq 0$. Indeed,  $B_0= 0$ would imply $w' = 0$ on $(0,\delta)$, which in turn would yield $w(x) = w(0) = 0$ for all  $x \in (0,\delta)$, contradicting the nondegeneracy statement in Proposition \ref{prop:deg}. We infer that $|\dot{G}\circ u'| \; |w'| =|B_0| > 0 $ on $(0,\delta)$. Using that $|\dot{G} \circ u'|$ is uniformly bounded above and below one finds $c,C > 0$ such that $c \leq |w'|\leq C$ on $(0,\delta)$. This, $w(0) = 0$ and $w\vert_{(0,1)}> 0$ implies that
\begin{equation}\label{eq:wcontrolDelta}
    cx \leq w(x) \leq Cx \quad \forall x \in (0,\delta).
\end{equation}
Using that $x \mapsto x^\frac{3-2p}{p-1}$ is integrable on $(0,\delta)$ (as $\frac{3-2p}{p-1} = - 2 + \frac{1}{p-1} > -1$) we obtain that $|w|^\frac{3-2p}{p-1}$ is also integrable on $(0,\delta)$.
Integrability on $(1-\delta,1)$ follows the same lines.  We conclude that the integral on the right hand side of \eqref{eq:BVregu} is finite. The claimed $BV$-regularity follows.
\\
\textbf{Step 3.} We construct an obstacle $\psi_*$ as in the statement.  
We actually show that for cone obstacles (cf. Remark \ref{rem:symcone}) 
 each critical point (and therefore each minimizer) does not lie in $C^3(0,1)$.  
To this end let $\psi_*$ be an arbitrary cone obstacle satisfying Assumption \ref{ass:psi}, say  $\psi_*\vert_{[0,\theta]}$ and $\psi_*\vert_{[\theta,1]}$  are affine linear for some $\theta \in (0,1)$. 
By Remark \ref{rem:symcone},
each critical point $u_* \in M(\psi_*)$ must have the tip of the cone as point of coincidence, i.e.
$\{ u_* = \psi_* \} = \{ \theta \}$.  
We fix now any critical point $u_*$ and for a contradiction assume now that $u_* \in C^3(0,1)$.
Let $w_* := w_{u_*}$ be Euler's substitution, see 
Definition \ref{def:w}. 
By the nondegeneracy in $(0,1)$, cf. Proposition ~\ref{prop:deg}, and Proposition ~\ref{prop:reguw}, it follows that $w_* \in C^1(0,1)$. Moreover, by Proposition~ \ref{prop:reguw} there exists $m_* :(0,1) \rightarrow \mathbb{R}$ nonincreasing and locally constant on $\{ u_* > \psi_* \}$ and $a_* \in C^1(0,1), a_*> 0$ such that $a_*(x) w_*'(x) = m_*(x)$ for a.e. $x \in (0,1)$. Since $\{ u_* > \psi_* \}= (0,1) \setminus \{ \theta \}$, the only point of nonconstancy of $m_*$ can be at $\theta$ and thus one has 
\begin{equation}
 a_*(x) w_*'(x) =  m_*(x) = \begin{cases}
   c_1 & x \in (0, \theta) \\ c_2 & x \in (\theta,1) 
  \end{cases} ,
\end{equation}
for some real constants $c_1 \geq c_2$. However, since $a_*,w_*' \in C^0(0,1)$ one infers that $c_1 = c_2$ and therefore there exists some $c= c_1 =c_2 \in \mathbb{R}$ such that \begin{equation}
    a_*(x) w_*'(x) = c \quad \forall x \in (0,1). 
\end{equation}
In particular one has $\frac{d}{dx}(a_*(x) w_*'(x)) = 0$ for all $x \in (0,1)$. Once this is established one can follow the lines of the proof of Proposition \ref{prop:nonempty} (after \eqref{eq:boundaryproblemforw}) to infer the contradiction $u \equiv 0$. 
\end{proof}

\begin{remark}
The previous proof reveals that all critical points whose coincidence set is a singleton may not lie in $C^3(0,1)$. Actually, the same argument would reveal that whenever the coincidence set of a critical point is \emph{discrete} then it cannot lie in $C^3(0,1)$. 
If, on the contrary, the coincidence set has a \emph{nondiscrete part}, more regularity might be possible. 
Indeed, by \eqref{eq:elliptic} it becomes obvious that the regularity depends strongly on the \emph{coincidence measure} $\mu$. If, say, $\mu$ is absolutely continuous with respect to the Lebesgue measure and has a smooth density, then standard elliptic regularity theory implies higher regularity. Notice however that  a  necessary condition for such absolute continuity is $\mathcal{L}^1(\mathrm{spt}(\mu)) > 0$, which implies $\mathcal{L}^1(\{ u= \psi \}) \geq \mathcal{L}^1(\mathrm{spt}(\mu)) > 0$, i.e. coincidence on a very large set. 
\end{remark}
\begin{remark}
Finally, we clarify that in the case of $p \leq 2$, the major reason for the loss of regularity is the obstacle. Indeed, for these values of $p$ the degeneracy phenomenon at the boundary does not affect the regularity since the $p-1$'st root is locally Lipschitz on $[0,\infty)$ when $p \leq 2$. This is why the Sobolev regularity of $u''$ and $w$ must coincide, cf. Proposition \ref{prop:regu1}.  That the major regularity problem is imposed by the obstacle becomes now obvious when recalling from Proposition \ref{prop:reguw} that
$  a(x) w'(x) = m(x)  
$ for a.e. $x \in (0,1).$
 Since $m$ is locally constant on $\{ u > \psi \}$ (and therefore smooth on $\{ u > \psi \}$) and $a \in C^1([0,1])$, $w'$ (and hence also $u'''$) may only lose its $C^0$-regularity on $\{ u = \psi \}$, i.e. because of coincidence with the obstacle. We have seen in Proposition \ref{prop:regu1} that this will actually happen. 
\end{remark}

We have now seen that touching the obstacle can potentially shrink regularity to $W^{3,\infty}$. 
In the case of $p > 2$ we will actually lose even more regularity due to the 
\emph{degeneracy} at the boundary. 

\begin{prop}[Case 2: $p > 2$]\label{prop:regu2}
Let $u \in M(\psi)$ be a critical point in the sense of Definition \ref{def:critical} and $p > 2$. Then $u \in W^{3,q}(0,1)$ for all $q \in [1, \frac{p-1}{p-2})$ but
$u \not \in W^{3, \frac{p-1}{p-2}}(0,1)$ . 
\end{prop}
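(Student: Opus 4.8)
\textbf{Proof plan for Proposition \ref{prop:regu2} (Case $p > 2$).}

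The plan is to run an argument parallel to Steps 1--2 of Proposition \ref{prop:regu1}, but to track carefully how the $(p-1)$'st root degrades integrability near the boundary $\{0,1\}$, where $u''$ vanishes. Recall from \eqref{eq:defww} that $w = p\,\dot{G}(u')^{p-1}(-u'')^{p-1}$, so that $-u'' = \big(\tfrac{1}{p} w\big)^{\frac{1}{p-1}} / \dot{G}(u')$. Since $\dot G \circ u' \in C^1([0,1])$ is bounded above and below, the regularity of $u''$ is governed by that of $w^{\frac{1}{p-1}}$. By Proposition \ref{prop:reguw}, $w \in W^{1,\infty}(0,1)$ with $w(0)=w(1)=0$, and $a w' = m$ with $m$ nonincreasing, locally constant on $\{u>\psi\}$; exactly as in Step 2 of the previous proof (choosing $\delta>0$ with $\{u=\psi\}\cap([0,\delta)\cup(1-\delta,1])=\emptyset$ and using nondegeneracy, Proposition \ref{prop:deg}, to rule out $m\equiv 0$ near the endpoints) one obtains the two-sided linear bound $c\,x \le w(x) \le C\,x$ on $(0,\delta)$ and symmetrically near $1$, with $w$ bounded below on the compact middle piece.

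Next I would differentiate $-u'' = (\tfrac1p w)^{\frac{1}{p-1}} \dot G(u')^{-1}$. Using $w' = m/a$ from Proposition \ref{prop:reguw}, one gets, away from $\{w=0\}$ (which by Proposition \ref{prop:deg} is just $\{0,1\}$), a pointwise formula of the shape
\begin{equation}
    u'''(x) = \frac{1}{\dot G(u'(x))}\left( \tfrac{1}{p(p-1)} \Big(\tfrac1p w(x)\Big)^{\frac{2-p}{p-1}} \frac{m(x)}{\dot G(u'(x))} - \Ddot G(u'(x))\, u''(x)^2 \right),
\end{equation}
analogous to \eqref{eq:u'''}. The second term is in $L^\infty$, so $u''' \in L^q$ iff $w^{\frac{2-p}{p-1}} \cdot m \in L^q$ near $0$ and $1$. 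Since $m$ is bounded and, near the endpoints, constant and nonzero, this reduces to integrability of $w^{\frac{(2-p)q}{p-1}}$, hence by the linear sandwich to integrability of $x^{\frac{(2-p)q}{p-1}}$ near $0$. That integral is finite precisely when $\frac{(2-p)q}{p-1} > -1$, i.e. $q < \frac{p-1}{p-2}$, which gives $u \in W^{3,q}(0,1)$ for all $q \in [1,\frac{p-1}{p-2})$. For the sharp negative statement I would specialize to a cone obstacle $\psi$ (Remark \ref{rem:symcone}), so $\{u=\psi\}=\{\theta\}$, fix $\delta>0$ with $[0,\delta)$ disjoint from $\theta$, and show $m \equiv B_0 \neq 0$ on $(0,\delta)$ exactly as in Step 2 of Proposition \ref{prop:regu1}; together with $c\,x \le w(x) \le C\,x$ this forces $|u'''(x)| \ge \text{const}\cdot x^{\frac{2-p}{p-1}}$ near $0$, and since $\frac{(2-p)}{p-1}\cdot\frac{p-1}{p-2} = -1$ is the non-integrable borderline, $u''' \notin L^{\frac{p-1}{p-2}}(0,\delta)$, so $u \notin W^{3,\frac{p-1}{p-2}}(0,1)$.

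The main obstacle is bookkeeping the chain rule in a low-regularity setting: $u''$ is only $C^0$ a priori, and $w^{\frac{1}{p-1}}$ is not Lipschitz at $0$ when $p>2$, so the formula for $u'''$ must be justified on $(0,1)\setminus\{0,1\}$ (where Proposition \ref{prop:deg} gives $-u''>0$ and Corollary \ref{cor:regunonc}/Proposition \ref{prop:reguw} give enough smoothness of $w$) and then one must argue that this a.e.-defined $u'''$ is genuinely the distributional derivative of $u''$ in the relevant $L^q$ sense — here one can use that $u'' \in W^{1,1}_{loc}(0,1)$ away from the obstacle plus the explicit integrable bound near the endpoints to patch across $\theta$. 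A secondary point is verifying the lower bound $|u'''|\gtrsim x^{\frac{2-p}{p-1}}$ is not destroyed by cancellation with the $\Ddot G(u')u''^2$ term; but $u''^2 \to 0$ at the endpoint while $x^{\frac{2-p}{p-1}}\to\infty$, so the singular term dominates and no cancellation occurs near $0$, which closes the argument.
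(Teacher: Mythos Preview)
Your overall strategy is correct and closely parallels the paper's proof: reduce the regularity of $u''$ to that of $w^{\frac{1}{p-1}}$, derive the pointwise formula for $u'''$ (your analogue of \eqref{eq:u'''}), and use the linear two-sided bound $cx\le w(x)\le Cx$ near the endpoints to identify the threshold exponent $\tfrac{p-1}{p-2}$. Your remarks on justifying the chain rule via $-u''\in W^{1,\infty}_{loc}(0,1)$ and on ruling out cancellation with the bounded term $\Ddot G(u')\,u''^2$ are exactly the right bookkeeping points, and the paper handles them the same way.

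There is, however, one genuine conceptual slip. For the non-membership $u\notin W^{3,\frac{p-1}{p-2}}$ you write that you would ``specialize to a cone obstacle.'' Read the statement again: unlike Proposition~\ref{prop:regu1}, where optimality is asserted only for \emph{some} obstacle $\psi_*$, here the claim is that \emph{every} critical point, for \emph{every} obstacle satisfying Assumption~\ref{ass:psi}, fails to lie in $W^{3,\frac{p-1}{p-2}}$. Restricting to cone obstacles proves strictly less than what is asserted.

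The fix is immediate, and in fact you already have it. In your first paragraph you correctly choose $\delta>0$ with $\{u=\psi\}\cap\big([0,\delta)\cup(1-\delta,1]\big)=\emptyset$ --- this is always possible since $\{u=\psi\}$ is compact in $(0,1)$ by Assumption~\ref{ass:psi} (cf.\ Remark~\ref{rem:coinci}) --- and you correctly argue that $m\equiv B_0\neq 0$ on $(0,\delta)$ via nondegeneracy (Proposition~\ref{prop:deg}). No cone obstacle is needed for this step. From $B_0\neq 0$ and $cx\le w(x)\le Cx$ the lower bound $|u'''(x)|\ge c'\,x^{\frac{2-p}{p-1}}$ near $0$ then follows for \emph{every} critical point, and hence $u'''\notin L^{\frac{p-1}{p-2}}(0,\delta)$ unconditionally. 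Simply delete the cone-obstacle specialization and your argument is complete and matches the paper's.
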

\begin{proof}
Proposition~\ref{prop:reguw} yields 
that Euler's substitution $w = p \dot{G}(u')^{p-1} (- u'')^{p-1}$ lies in $W^{1,\infty}(0,1)$. By the nondegeneracy in the interior (cf. Proposition~\ref{prop:deg}) one has that $-u''>0$ on $(0,1)$, or alternatively $w > 0$ on $(0,1)$. Therefore, since $\frac{1}{\dot{G}(u')^{p-1}}$ is $C^1$, we infer that $ v := (-u'')^{p-1} \in W^{1,\infty}$. Moreover, since $-u'' > 0$ on $(0,1)$ (cf. Proposition~\ref{prop:deg}) and $F(z) := z^{\frac{1}{p-1}}$ is locally Lipschitz on $(0,\infty)$ ($0$ excluded!) we infer that $-u'' = F \circ v \in W^{1,\infty}_{loc}(0,1)$, i.e. $u''$ is locally Lipschitz and in particular weakly differentiable. 

\textbf{Step 1.} We show that $u \not \in W^{3, \frac{p-1}{p-2}}(0,1)$. 
Reasoning as in the proof of Proposition~ \ref{prop:regu1} (below \eqref{eq:BVregu})
there exists a constant $B_0 \in \R$, $B_0 \ne 0$ and $\delta>0$ s.t. 
$m = B_0$ on $(0,\delta)$.
In particular, from \eqref{eq:u'''} for a.e. $x \in (0, \delta)$ we have
\begin{equation}
    u'''(x) =  \frac{1}{\dot{G}(u'(x))} \Bigl( - \frac{B_0}{c_p \dot{G}(u'(x))} w(x)^{\frac{2-p}{p-1}} - \Ddot{G}(u'(x)) u''(x)^2 \Bigr) 
\end{equation}
with $B_0 \ne 0$.
We infer that on $(0,\delta) $, $|u'''|$ has the same integrability as $w^\frac{2-p}{p-1}$. Indeed, the prefactor $\frac{1}{\dot{G}\circ u'}$ can safely be disregarded since it is bounded from above and below. The summand $(\Ddot{G} \circ u' ) u''^2$ lies in $C([0,1]) \subset L^\infty(0,1)$, cf. Proposition~\ref{prop:reguw}, and hence can also be disregarded for the integrability. 
To finish the proof we investigate the integrability of $w^\frac{2-p}{p-1}$ on $(0,\delta)$.
Similar to the discussion in \eqref{eq:wcontrolDelta} in Proposition \ref{prop:regu1} we can derive that there exists $c,C> 0$ such that $cx \leq w(x) \leq Cx$ for all $x \in (0,\delta)$.
This in particular yields that $w^\frac{2-p}{p-1}$ has the same integrability as $x^\frac{2-p}{p-1}$. This on contrary implies by the discussion above that $|u'''|$ has the same integrability as $x^{\frac{2-p}{p-1}}$ on $(0,\delta)$. In particular, $|u'''| \not \in L^\frac{p-1}{p-2}(0,\delta)$, which implies that $u \not \in W^{3, \frac{p-1}{p-2}}(0,1).$ \\
\textbf{Step 2.}  We show that $u''' \in L^q(0,1)$ for all $q \in [1,\frac{p-1}{p-2})$. 
Note that \eqref{eq:u'''} and $\dot{G} \circ u' \geq a_0$ on $(0,1)$
imply that for a.e. $x \in (0,1)$ one has 
\begin{equation}
    |u'''(x)| \leq  \frac{||m||_\infty}{c_pa_0^2} w(x)^\frac{2-p}{p-1}  + \frac{1}{a_0}||\Ddot{G} \circ u'||_{L^\infty} ||u''||_{L^\infty}^2. 
\end{equation}
This and $w> 0$ implies again that for $\delta > 0$ as in Step 1 one has $w \in L^\infty(\delta,1-\delta)$. Hence it only remains to show $L^q$-integrability on $(0,\delta) \cup (1-\delta,1)$. 
Concerning $(0,\delta)$, let $c, C > 0$ be chosen as in Step 1.
 Then one has for $x \in (0,\delta)$
\begin{equation}
    |u'''(x)| \leq \frac{||m||_\infty}{c_p a_0^2} (cx)^{\frac{2-p}{p-1}} + \frac{1}{a_0}||\Ddot{G} \circ u'||_{L^\infty} ||u''||_{L^\infty}^2 \, .
\end{equation}
The last summand is finite by Proposition \ref{prop:reguw} and the first summand lies clearly in $L^q(0,\delta)$ for all $q \in [1, \frac{p-1}{p-2})$. Along the same lines, one can show the same integrability on $(1-\delta,1)$.
From the above discussion we conclude that 
\begin{equation}
    u''' \in L^\infty(\delta, 1- \delta) \cap L^q((0,\delta) \cup ( 1- \delta,1)) \subset L^q(0,1). \qedhere
\end{equation}
\end{proof}
Finally we combine the results in this section to obtain Theorem~\ref{Theorem:1}.
\begin{proof}[Proof of Theorem \ref{Theorem:1}]
The theorem follows combining the claims of Proposition~\ref{prop:reguw} and of Propositions~\ref{prop:deg}, \ref{prop:regu1} and \ref{prop:regu2}.
\end{proof}

\begin{remark}
We remark that in the case of $p>2$ the $BV$-property of $u'''$ can not be true. This is due to the fact that in one dimension $BV(0,1) \subset L^\infty(0,1)$.  However a close examination of the proof of Proposition \ref{prop:regu1} (Step 2, in particular \eqref{eq:BVregu}) still yields that $u''' \in BV_{loc}(0,1)$. This is due to the fact that $|w|^\frac{3-2p}{p-1}$ is integrable on $(\delta,1-\delta)$ by nondegeneracy. 
\end{remark}

\section{Existence and symmetry of minimizers} \label{section:existence-sym-mini}
\subsection{Existence of minimizers} \label{section:existence-mini} 
 In this section we will show an existence result for small obstacles for any $p \in (1,\infty)$. The smallness condition on the obstacle is expressed in terms of an upper bound on $\inf_{u \in M(\psi)} \mathcal{E}(u)$. In Section \ref{sec:nonexistence} we prove a non-existence result 
 for large obstacles, i.e.  smallness requirements on $\psi$ are really necessary. The bounds that we formulate in this section are likely not optimal.
 However in Section \ref{section:4} we will obtain a sharp existence result when minimizing among \emph{symmetric functions} only. 
  Notice that it is delicate to show symmetry of minimizers for a higher order problem due to the lack of a maximum principle. 
  Nevertheless we will present a symmetry result for small obstacles in Section \ref{sec:symmetry}. 
 
 In order to quantify a sufficient upper bound on the infimum we will define the constant 
\begin{equation}\label{eq:defcp}
    c_p(G) := 2 \lim_{s \rightarrow \infty} G(s), 
\end{equation}
which always exists in $[0,\infty]$ since $G$ is monotone and bounded by Assumption \ref{ass:G}.

 We also define some carefully chosen comparison functions, inspired by \cite{Anna}. For $c \in (0,c_p(G))$ we set
 \begin{equation}\label{eq:uc}
     u_c :[0,1] \to [0,\infty), \quad u_c(x) := \frac{1}{c} \int_{G^{-1} ( \frac{c}{2}- cx) }^{G^{-1}( \frac{c}{2} ) } s \dot{G}(s) \ds . 
 \end{equation}
 Clearly, one has $u_c \in C^\infty([0,1])$ and 
 \begin{equation}\label{eq:ucprime}
     u_c'(x) = G^{-1} \left( \frac{c}{2} - cx \right). 
 \end{equation}
 This implies that 
 \begin{equation}\label{eq:energyuc}
     \mathcal{E}(u_c) = \int_0^1 |G(u_c')'|^p \, \dx = c^p.
  \end{equation}
  We also define 
  \begin{equation} \label{def:U_0^G}
      U_0^G(x) := \lim_{c \rightarrow c_p(G)} u_c(x). 
  \end{equation}
  \begin{remark}\label{rem:increasingness}
      The map $c \mapsto u_c(x)$ is actually increasing for each $x \in [0,1]$, which is why the limit in \eqref{def:U_0^G} is a pointwise supremum. Indeed, one can calculate with \eqref{eq:uc} and \eqref{eq:ucprime} for fixed $x \in [0,\frac{1}{2}]$
      \begin{align}
          \partial_c u_c(x)  & = - \tfrac{1}{c} u_c(x) + \tfrac{1}{c} \left( \tfrac{1}{2}G^{-1}(\tfrac{c}{2}) - (\tfrac{1}{2}-x)G^{-1}(\tfrac{c}{2}-cx) \right)  \\ & = \tfrac{1}{c} (x u_c'(0)- u_c(x)) + \tfrac{1}{c}(\tfrac{1}{2}-x) (G^{-1}(\tfrac{c}{2})- G^{-1}(\tfrac{c}{2}-cx) ). 
      \end{align}
      Using the concavity of $u_c$ (cf. \eqref{eq:ucprime}) and $x \in [0,\frac{1}{2}]$ one infers that $\partial_c u_c(x)$ is nonnegative as a sum of nonnegative terms. For $x \in [\frac{1}{2},1]$ one obtains the same result by symmetry.
  \end{remark}
  It is easy to see (using that $(s \mapsto s \cdot \dot{G}(s)) \in L^1$ by Assumption \ref{ass:G}) that $U_0^G \in C^\infty(0,1) \cap C([0,1])$, $U_0^G(0)= U_0^G(1)= 0$ and 
  \begin{equation}
      (U_0^G)'(x) = G^{-1} \left( \frac{c_p(G)}{2}- c_p(G) x \right) \quad \forall x \in (0,1). 
  \end{equation}
  
  \begin{figure}[h!]
  \begin{minipage}{0.45\textwidth}
      \includegraphics[scale=0.6]{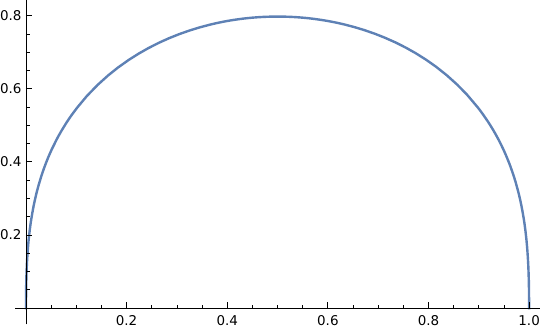}
  \end{minipage}
  \begin{minipage}{0.45\textwidth}
      \includegraphics[scale=0.6]{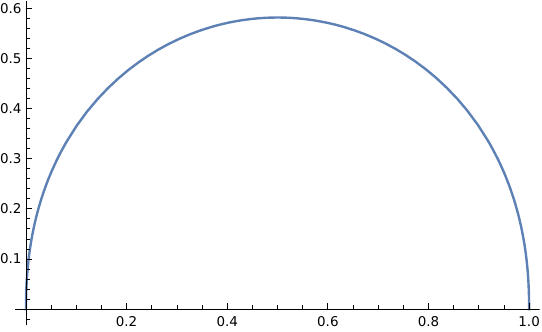}
  \end{minipage}
  \caption{$U_0^G$ for $G= EU_2$ (left) and $G=EU_5$ (right).
  }
  \label{fig1}
  \end{figure}
  This is why we sometimes write by abuse of notation $U_0^G = u_{c_p(G)}$. 
Next we give a universal upper bound on $\inf_{u \in M(\psi)} \mathcal{E}(u)$ independent of the obstacle. We expect this bound to be sharp for large obstacles since it is 
sharp for the minimization among symmetric functions, as we shall see in Section~\ref{section:4}. 
  
 \begin{prop}[A universal bound]\label{prop:univbound}
 Let $\psi \in C([0,1])$ satisfy Assumption~\ref{ass:psi}. Then
\begin{equation}
    \inf_{u \in M(\psi)} \mathcal{E}(u) \leq c_p(G)^p. 
\end{equation}
\end{prop}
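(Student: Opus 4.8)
The plan is to show, for every $\varepsilon>0$, the existence of an admissible competitor $u\in M(\psi)$ with $\mathcal{E}(u)<c_p(G)^p+\varepsilon$. We may assume $c_p(G)<\infty$, since otherwise there is nothing to prove. The idea is that affine functions carry zero energy (as $\frac{d}{dx}G(u')\equiv 0$ for them), so one can prepend and append very steep straight segments to a horizontally compressed copy of $u_c$ in order to gain height cheaply while keeping all the bending concentrated in the $u_c$-piece.

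Concretely, for $c\in(0,c_p(G))$ and $\sigma\in(0,\tfrac12)$ I define
\begin{equation*}
u_{c,\sigma}(x):=
\begin{cases}
G^{-1}(\tfrac{c}{2})\,x, & x\in[0,\tfrac12-\sigma],\\[1mm]
G^{-1}(\tfrac{c}{2})(\tfrac12-\sigma)+2\sigma\,u_c\!\left(\dfrac{x-\tfrac12+\sigma}{2\sigma}\right), & x\in[\tfrac12-\sigma,\tfrac12+\sigma],\\[2mm]
G^{-1}(\tfrac{c}{2})(1-x), & x\in[\tfrac12+\sigma,1].
\end{cases}
\end{equation*}
Using $u_c'(y)=G^{-1}(\tfrac{c}{2}-cy)$ and the oddness of $G$, the one-sided derivatives of $u_{c,\sigma}$ match at $x=\tfrac12\mp\sigma$ (they equal $\pm G^{-1}(\tfrac{c}{2})$, matching $u_c'(0)$ and $u_c'(1)$ after rescaling), so $u_{c,\sigma}'$ is continuous and piecewise $C^1$; hence $u_{c,\sigma}\in W^{2,\infty}(0,1)\subset W^{2,p}(0,1)$, with $u_{c,\sigma}(0)=u_{c,\sigma}(1)=0$. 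The two affine pieces contribute nothing to $\mathcal{E}$, and the behaviour of $\mathcal{E}$ under horizontal dilation together with \eqref{eq:energyuc} yields $\mathcal{E}(u_{c,\sigma})=(2\sigma)^{1-p}\,\mathcal{E}(u_c)=(2\sigma)^{1-p}c^p$.

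It remains to make $u_{c,\sigma}$ admissible. Since $\psi(0),\psi(1)<0$ and $\psi$ is continuous, fix $\delta\in(0,\tfrac14]$ with $\psi<0$ on $[0,\delta]\cup[1-\delta,1]$, and restrict to $\sigma\in[\tfrac12-\delta,\tfrac12)$, so that $[0,\tfrac12-\sigma]\subset[0,\delta]$, $[\tfrac12+\sigma,1]\subset[1-\delta,1]$ and $[\delta,1-\delta]\subset[\tfrac12-\sigma,\tfrac12+\sigma]$. On the two outer intervals $u_{c,\sigma}\ge0>\psi$; on $[\tfrac12-\sigma,\tfrac12+\sigma]$ one has $u_{c,\sigma}\ge G^{-1}(\tfrac{c}{2})(\tfrac12-\sigma)$ because $u_c\ge0$, so $u_{c,\sigma}\ge\psi$ there as soon as $G^{-1}(\tfrac{c}{2})(\tfrac12-\sigma)\ge\max_{[0,1]}\psi$. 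As $G:\mathbb{R}\to(-\tfrac{c_p(G)}{2},\tfrac{c_p(G)}{2})$ is an increasing bijection, $G^{-1}(\tfrac{c}{2})\to\infty$ as $c\to c_p(G)^-$, so for $\sigma$ fixed this inequality holds once $c$ is close enough to $c_p(G)$.

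Putting it together: given $\varepsilon>0$, first pick $\sigma\in[\tfrac12-\delta,\tfrac12)$ so close to $\tfrac12$ that $(2\sigma)^{1-p}c_p(G)^p<c_p(G)^p+\varepsilon$, and then pick $c\in(0,c_p(G))$ so close to $c_p(G)$ that $u_{c,\sigma}\in M(\psi)$; since $\mathcal{E}(u_{c,\sigma})=(2\sigma)^{1-p}c^p<(2\sigma)^{1-p}c_p(G)^p<c_p(G)^p+\varepsilon$, one concludes $\inf_{M(\psi)}\mathcal{E}\le c_p(G)^p$. The only point requiring care is this two-step choice: a small energy overhead forces $\sigma$ near $\tfrac12$, which shrinks the outer segments and therefore demands that they be extremely steep to clear $\psi$; but the two requirements decouple, because $\sigma$ is chosen first and the steepness (governed by $c$) afterwards, so there is no genuine obstruction. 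The remaining work — continuity of $u_{c,\sigma}'$ at the junctions and the dilation scaling of $\mathcal{E}$ — is routine.
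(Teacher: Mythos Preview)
Your proof is correct and follows essentially the same approach as the paper: build a piecewise function consisting of two affine outer segments (which carry zero energy) glued to a horizontally rescaled copy of $u_c$ in the middle, then let the rescaling parameter tend to its limiting value to make the energy overhead vanish while pushing $c\to c_p(G)$ to gain enough height over $\psi$. Your parameter $\sigma$ is exactly the paper's $1-2\delta$ divided by $2$ (equivalently, your $\tfrac12-\sigma$ is the paper's $\delta$), so the competitor, the energy computation $(2\sigma)^{1-p}c^p=\frac{c^p}{(1-2\delta)^{p-1}}$, and the two-step choice of parameters all coincide.
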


\begin{proof}
We provide a comparison function with techniques similar to \cite[Lemma~2.4]{Anna}. For $\delta > 0$ and $c \in (0,c_p(G))$ look at 
\begin{equation}
    u_{\delta,c}(x) := \begin{cases}
     u_c'(0) x & x \in [0,\delta], \\ u_c'(0) \delta + (1-2\delta) u_c \left( \frac{x-\delta}{1-2\delta} \right) & x \in (\delta, 1- \delta),
     \\  u_c'(0) \delta + u_c'(1) (x-(1-\delta)) & x \in [1-\delta, 1].
    \end{cases} 
\end{equation}
Since $u_{\delta,c}(0) = 0$ and $u_{\delta,c}(1) = 0$ (since $u_{c}'(1) = - u_c'(0)$ by \eqref{eq:ucprime}) one easily sees that $u_{\delta,c} \in W^{2,p}(0,1) \cap W_0^{1,p}(0,1)$. We choose now $\delta$ and $c$ such that $u_{\delta,c} \in M(\psi)$. 
Let $\delta > 0$ be arbitrary but fixed and small enough such that $\psi < 0$ on $[0,\delta] \cup [1-\delta, 1]$. Next choose $c =c(\delta) \in (0,c_p(G))$ such that $u_c'(0) \delta > ||\psi||_\infty.$ This can be done since $u_c'(0) = G^{-1}(\frac{c}{2}) \rightarrow \infty$ as $c \uparrow c_p(G)$. Clearly, then $u_{\delta,c} \in M(\psi)$. 

Next we compute using \eqref{eq:ucprime}
\begin{equation}
    G(u_{c,\delta}') = \begin{cases}
     G(u_c'(0)) & x \in [0,\delta] \\ G( u_c'( \frac{x-\delta}{1-2\delta} ) ) & x \in ( \delta, 1-\delta) \\ G(u_c'(1) ) & x \in [1-\delta,1]
    \end{cases} = \begin{cases}
     \frac{c}{2} & x \in [0,\delta], \\ \frac{c}{2}- c \frac{x-\delta}{1-2\delta} & x \in (\delta,1-\delta), \\ -\frac{c}{2} & x \in [1-\delta, 1].
    \end{cases}
\end{equation}
In particular we conclude that $G(u_{c,\delta}')' =- \frac{c}{1-2\delta} \chi_{(\delta,1-\delta)}$. Hence a straightforward computation implies
\begin{equation}
    \mathcal{E}(u_{c,\delta}) = (1-2\delta) \left( \frac{c}{1-2\delta} \right)^p  \leq \frac{c_p(G)^p}{(1-2\delta)^{p-1}}. 
\end{equation}
Since $\delta > 0$ can be chosen arbitrarily small we obtain the claim. 
\end{proof}

Finally we can formulate an existence theorem for small obstacles under a more restrictive energy bound. 

\begin{theorem}[Existence for small obstacles]\label{thm:exinotopti}
Suppose that 
\begin{equation}\label{eq:inficondi}
    \inf_{u \in M(\psi)} \mathcal{E}(u) < \frac{1}{2^p} c_p(G)^p. 
\end{equation}
Then there exists $u_* \in M(\psi)$ such that $\mathcal{E}(u_*) = \inf_{u \in M(\psi)} \mathcal{E}(u)$. Moreover $u_*$ is a critical point in the sense of Definition \ref{def:critical} (and hence satisfies all the regularity properties discussed in Section~\ref{chap:criticalpt}). 
\end{theorem}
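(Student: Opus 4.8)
\textbf{Proof plan for Theorem \ref{thm:exinotopti}.} The plan is to use the direct method of calculus of variations, where the main issue is lower semicontinuity of $\mathcal{E}$ under weak $W^{2,p}$ convergence together with a coercivity argument that must control the $W^{2,p}$-norm of a minimizing sequence. First I would take a minimizing sequence $(u_n)_{n} \subset M(\psi)$, so that $\mathcal{E}(u_n) \to \inf_{u \in M(\psi)} \mathcal{E}(u) =: e_0 < \tfrac{1}{2^p}c_p(G)^p$. Since $\dot{G} > 0$ is bounded below on every compact set, but here $u_n'$ need not be bounded a priori, the key preliminary step is to derive a uniform $L^\infty$ bound on $u_n'$: because $\mathcal{E}(u_n) = \int_0^1 |G(u_n')'|^p\,dx$ is bounded and $G \circ u_n'$ vanishes at some interior point (by the boundary conditions $u_n(0)=u_n(1)=0$ and the mean value theorem there is $\xi_n$ with $u_n'(\xi_n)=0$, hence $G(u_n'(\xi_n))=0$), one gets $\|G(u_n')\|_{L^\infty} \le \|(G\circ u_n')'\|_{L^1} \le \|(G\circ u_n')'\|_{L^p} = \mathcal{E}(u_n)^{1/p}$, which is bounded strictly below $\tfrac{1}{2}c_p(G)$ for $n$ large. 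Since $G$ maps $\mathbb{R}$ onto $(-\tfrac12 c_p(G), \tfrac12 c_p(G))$ and is a diffeomorphism, $G(u_n')$ staying in a compact subset of this open interval forces $u_n'$ to stay in a compact subset of $\mathbb{R}$, i.e. $\|u_n'\|_{L^\infty} \le C$; this is precisely where the strict inequality \eqref{eq:inficondi} is used (with the factor $2^{-p}$).

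Once $u_n'$ is uniformly bounded, $\dot{G}(u_n')$ is bounded above and below by positive constants, so from $\mathcal{E}(u_n) = \int_0^1 \dot{G}(u_n')^p |u_n''|^p\,dx$ we get a uniform bound on $\|u_n''\|_{L^p}$, and together with the boundary conditions and Poincaré this yields a uniform $W^{2,p}(0,1)$ bound. Hence, up to a subsequence, $u_n \rightharpoonup u_*$ weakly in $W^{2,p}$ and (by compact embedding) $u_n \to u_*$ in $C^1([0,1])$. The $C^1$ convergence immediately gives $u_* \in M(\psi)$ (the constraint $u_* \ge \psi$ and the boundary conditions pass to the limit) and $u_n' \to u_*'$ uniformly, so $\dot{G}(u_n') \to \dot{G}(u_*')$ uniformly while $u_n'' \rightharpoonup u_*''$ weakly in $L^p$. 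Then lower semicontinuity follows: writing $\mathcal{E}(u) = \int_0^1 |\dot{G}(u')|^p |u''|^p\,dx = \int_0^1 \big| \, |\dot G(u')| u'' \,\big|^p dx$ and noting $|\dot G(u_n')| u_n'' \rightharpoonup |\dot G(u_*')| u_*''$ weakly in $L^p$ (product of a uniformly convergent sequence and a weakly convergent one), convexity of $t \mapsto |t|^p$ and weak lower semicontinuity of the $L^p$-norm give $\mathcal{E}(u_*) \le \liminf_n \mathcal{E}(u_n) = e_0$. Since $u_* \in M(\psi)$, also $\mathcal{E}(u_*) \ge e_0$, so $u_*$ is a minimizer.

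Finally, that $u_*$ is a critical point in the sense of Definition \ref{def:critical} is exactly the content of Proposition \ref{prop:ELeq}: a minimizer in $M(\psi)$ satisfies \eqref{eq:minELeq} with a coincidence measure $\mu$, which is the defining property in Definition \ref{def:critical}. Consequently all regularity and structural conclusions of Section \ref{chap:criticalpt} (Theorem \ref{Theorem:1}, concavity, nondegeneracy, etc.) apply to $u_*$. The main obstacle in this argument is the first step --- converting the energy bound into an $L^\infty$ bound on the derivative --- since without it the integrand $\dot G(u_n')^p |u_n''|^p$ could degenerate (note $\dot G \to 0$ at infinity, by $\mathrm{id}\cdot \dot G \in L^1$), and it is precisely here that the smallness assumption \eqref{eq:inficondi}, rather than merely the universal bound $c_p(G)^p$ from Proposition \ref{prop:univbound}, becomes essential: the minimizing sequence must have energy bounded away from the threshold $\big(\tfrac12 c_p(G)\big)^p$ at which $\|G(u_n')\|_{L^\infty}$ could reach the boundary of the range of $G$ and $u_n'$ escape to infinity.
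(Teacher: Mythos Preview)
Your proof is correct and follows essentially the same approach as the paper's: the same coercivity step (using $u_n'(\xi_n)=0$ together with Jensen/H\"older to get $\|G(u_n')\|_{L^\infty}<\tfrac12 c_p(G)$ and hence a uniform $L^\infty$-bound on $u_n'$), the same $L^p$-bound on $u_n''$ via $\inf\dot G>0$ on compacta, and the same lower semicontinuity via the weak $L^p$-convergence of $\dot G(u_n')u_n''$. The only cosmetic difference is that the paper spells out weak lower semicontinuity of the $L^p$-norm through the duality/H\"older argument, whereas you invoke it directly; likewise, the critical point property is obtained in both proofs by appeal to Proposition~\ref{prop:ELeq}.
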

\begin{proof}
Let $(u_n)_{n \in \mathbb{N}} \subset M(\psi)$ be a minimizing sequence. We first claim that $(u_n)_{n \in \mathbb{N}}$ is bounded in $W^{1,\infty}(0,1)$. Indeed, since $u_n(0) = u_n(1)= 0$ there must exist $\xi_n \in (0,1)$ such that $u_n'(\xi_n) = 0$. Further let $x_n \in [0,1]$ be such that $|u_n'(x_n) |=||u_n'||_{L^\infty}$. 
Now there exists $\delta > 0$ such that for $n$ large enough one has by Jensen's inequality
\begin{align}\label{eq:estimatedelta}
    \frac{1}{2^p} c_p(G)^p - \delta  & = \int_0^1 |G(u_n')'|^p \dx \geq \Big( \int_0^1 |G(u_n')'| \dx  \Big)^p\\ & \geq \Bigl\vert \int_{\xi_n}^{x_n} G(u_n')' \dx  \Bigm\vert^p  
    =  |G(u_n'(x_n))|^p = G(|u_n'(x_n)|)^p = G(||u_n'||_\infty)^p. 
\end{align}
 From this it directly follows that there exists $C> 0 $ such that $||u_n'||_{L^\infty} \leq C$. 

Therefore we obtain 
\begin{equation}
    \frac{1}{2^p} c_p(G)^p > \int_0^1 |G(u_n')'|^p  \dx = \int_0^1 \dot{G}(u_n')^p |u_n''|^p \dx \geq \inf_{z \in [-C,C]} \dot{G}(z)^p \int_0^1 |u_n''|^p \dx. 
\end{equation}
Since $\inf_{z \in [-C,C]} \dot{G}(z) > 0$ one has that $||u_n''||_{L^p}$ is bounded. Since $||u_n||_{L^p} \leq C ||u_n'||_{L^\infty}$ by the zero boundary conditions we infer that $(u_n)_{n \in \mathbb{N}}$ is bounded in $W^{2,p}(0,1)$, implying that (up to a subsequence) $u_n \rightharpoonup u_*$ in $W^{2,p}(0,1)$. One readily checks (since then $u_n \rightarrow u_*$ in $C([0,1])$) that $u_* \in M(\psi).$ It only remains to show that $\mathcal{E}$ is lower semicontinuous in $u_*$. To see this we use that $u_n '' \rightharpoonup u_*''$ weakly in $L^p(0,1)$ and therefore (since $u_n \rightarrow u_*$ in $C^1$) 
also $\dot{G}(u_n') u_n'' \rightharpoonup \dot{G}(u_*') u_*''$ weakly in $L^p(0,1)$. Using this and $\dot{G}(u_*')^{p-1}|u_*''|^{p-2} u_*'' \in L^{\frac{p}{p-1}}(0,1)$ 
we find 
\begin{align}
    \mathcal{E}(u_*) & = \int_0^1 \dot{G}(u_*')^{p-1} |u_*''|^{p-2} u_*'' \dot{G}(u_*') u_*'' \dx  \\ & = \lim_{n \rightarrow \infty }\int_0^1 \dot{G}(u_*')^{p-1} |u_*''|^{p-2} u_*'' \dot{G}(u_n') u_n'' \dx
    \\ & \leq \liminf_{n\rightarrow \infty} 
    \Bigl(\int_0^1 |\dot{G}(u_*')|^p |u_*''|^p \dx \Bigr)^\frac{p-1}{p}\Bigl( \int_0^1 |\dot{G}(u_n') u_n''|^p \dx \Bigr)^\frac{1}{p}
   \\ &  = 
   \mathcal{E}(u_*)^\frac{p-1}{p} \liminf_{n \rightarrow \infty} \mathcal{E}(u_n)^\frac{1}{p}.
\end{align}
The lower semicontinuity follows by standard rearrangements.
\end{proof}
\begin{remark}\label{rem:plasticexistence}
The energy bound \eqref{eq:inficondi} in the previous theorem is somewhat difficult to translate in a geometric condition on the obstacle. However one can formulate a sufficient condition that is very easy to handle: If there exists $c \in (0, \frac{c_p(G)}{2})$ such that $\psi(x) \leq u_c(x)$ for all $x \in (0,1)$  then \eqref{eq:inficondi} holds true. This becomes obvious when using $u_c$ as a test function and looking at  \eqref{eq:energyuc}.
\end{remark}
\begin{remark}
We will show in the coming sections that without the energy bound \eqref{eq:inficondi} nonexistence may occur --- even though the energy infimum is always finite (cf. Proposition \ref{prop:univbound}). Numerics for $p=2$ and $G =EU_2$ (cf. \cite{Anna}) suggest that minimizers develop vertical slope at $x=0,1$ once the obstacles reach a certain height. Vertical slopes are however not allowed in our class $M(\psi)$. This makes it natural to look at a \emph{relaxation} of the problem. In the case of $p \in (1,\infty)$ and $G= EU_p$ one would want to find a relaxation that maintains the geometric meaning of the functional. This suggests looking at the $p$-elastic energy of curves in the larger class of \emph{pseudographs}, i.e. curves that can consist of graph parts and vertical parts, cf. \cite{MariusExistence}.
In \cite{MariusExistence} it is observed that for $p=2$ the elastic energy possesses a pseudograph minimizer, regardless of the height of the obstacle. We are confident that the analysis carries over to the situation of arbitrary $p$. 
\end{remark}
  
In the coming section we will look at symmetry of minimizers. One should also point out another geometric property which we already derived now --- the \emph{strict concavity}, i.e. $u''<0$, which we have already shown for every critical point in Proposition \ref{prop:deg}. 

\subsection{Symmetry}\label{sec:symmetry}
In this section we use a  \emph{nonlinear Talenti inequality} to prove symmetry of minimizers $u \in M(\psi)$ for suitably small symmetric obstacles $\psi$. While this symmetry property is also expected to hold for general symmetric obstacles, it is somewhat delicate to prove, since the Euler--Lagrange equation is fourth order and hence lacks a maximum principle. Notice also that curvature functionals generally exihibit nonsymmetric critical points despite symmetric boundary data, cf. \cite{Mandel}.
The smallness assumption is needed since Talenti's inequality (see \cite{Talenti}) is --- originally --- a tool to examine linear equations. Thus a control of the nonlinearity is needed. 

\begin{prop}[Symmetry under smallness condition]\label{prop:simsmall}
Suppose that $\psi$ is symmetric and $\psi \vert_{[0,\frac{1}{2}]}$ is increasing. 
Suppose further that there exists some minimizer $u \in M(\psi)$ that satisfies $||u'||_{L^\infty} \leq C_0$, where $C_0 > 0$ is such that 
$
    2\dot{G}(x) + x  \Ddot{G}(x) > 0$  for all $x \in [0,C_0] 
$.
Then one can find a  minimizer $v \in M(\psi)$ which is additionally symmetric.
\end{prop}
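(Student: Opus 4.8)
The plan is to use the standard symmetrization trick for one-dimensional variational problems: given a minimizer $u$, I build a competitor by keeping the "better half" of $u$ and reflecting it, then argue that this competitor is itself a minimizer and, thanks to the smallness assumption on $\|u'\|_{L^\infty}$, that it must actually agree with a genuinely symmetric function. Concretely, write $m := \min_{[0,1]} \mathcal{E}$ and split the energy as $\mathcal{E}(u) = \int_0^{1/2} |G(u')'|^p \dx + \int_{1/2}^1 |G(u')'|^p \dx =: A_1 + A_2$. Without loss of generality assume $A_1 \le A_2$. Define $v$ on $[0,1]$ by $v(x) := u(x)$ for $x \in [0,\tfrac12]$ and $v(x) := u(1-x)$ for $x \in [\tfrac12,1]$. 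The first checks are that $v$ is well-defined and admissible: continuity at $x = \tfrac12$ is automatic; $v(0)=u(0)=0$ and $v(1)=u(0)=0$; and $v \ge \psi$ on $[0,1]$ follows from $u \ge \psi$ together with the symmetry $\psi(x) = \psi(1-x)$. One also needs $v \in W^{2,p}(0,1)$: away from $\tfrac12$ this is clear, and at $\tfrac12$ one must verify that the one-sided first derivatives match, i.e. $u'(\tfrac12^-) = -u'(\tfrac12^+)$ — which need not hold for an arbitrary $u$! This is the first genuine obstacle and I address it below.

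Granting for the moment that the gluing is $W^{2,p}$, the energy of $v$ is $\mathcal{E}(v) = A_1 + A_1 \le A_1 + A_2 = \mathcal{E}(u) = m$, so $v$ is also a minimizer; hence $A_1 + A_1 = m$, forcing $A_1 = A_2 = m/2$, and in particular $\mathcal{E}(v) = m$ too. Moreover $v$ is symmetric by construction, so $v \in M_{\rm sym}(\psi)$, which is the assertion. It remains to handle the derivative-matching issue at $\tfrac12$. Here is where the smallness hypothesis $2\dot G + \mathrm{id}\cdot\ddot G > 0$ on $[0,C_0]$ and $\|u'\|_{L^\infty} \le C_0$ enter. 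The point is that any minimizer $u$ is a critical point in the sense of Definition \ref{def:critical} (by Proposition \ref{prop:ELeq}), hence by Theorem \ref{Theorem:1} it is $C^2([0,1])$, strictly concave with $u''<0$ on $(0,1)$, and $u'$ is strictly decreasing; so $u'$ has a unique zero $\xi \in (0,1)$. If $\xi = \tfrac12$ then $u'(\tfrac12) = 0 = -u'(\tfrac12)$ and the glued $v$ is automatically $C^1$ (indeed $C^2$, since $u''(\tfrac12^-) = u''(\tfrac12^+)$ by evenness of the reflection), and we are done. So the real work is to show $\xi = \tfrac12$, i.e. that the minimizer is \emph{already} symmetric about its apex, or at least that one may reduce to this case.

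I would argue $\xi = \tfrac12$ as follows. Suppose $\xi \neq \tfrac12$, say $\xi < \tfrac12$. Use the reflected competitor $v$ built from the "cheaper half": with $A_1 \le A_2$ one still gets $\mathcal{E}(v)\le m$. The obstruction is only the $C^1$-matching at $\tfrac12$. To repair it, one uses the monotone quantity $G\circ u'$: by Proposition \ref{prop:reguw} and the Euler--Lagrange equation, on the obstacle-free region $(\dot G(u')\, w')' = 0$, and $w = w_u$ controls $u''$ via $w = p\dot G(u')^{p-1}(-u'')^{p-1}$. The hypothesis $2\dot G + \mathrm{id}\cdot \ddot G > 0$ on $[0,C_0]$ is exactly the condition guaranteeing that the map $z \mapsto (\text{something like}) \ z\,\dot G(z)$ — more precisely the Talenti-type rearrangement operator associated with $\mathcal{E}$ — is monotone on the relevant range, so that the symmetric decreasing rearrangement does not increase $\mathcal{E}$ and the rearranged (symmetric) function is still admissible because $\psi$ is symmetric and $\psi|_{[0,1/2]}$ is increasing. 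Applying the nonlinear Talenti inequality to $w_u$ (viewed as solving a linear-in-$w$ second order equation with coefficient $\dot G(u')$, whose monotonicity in the right direction is precisely what the smallness condition buys), one obtains a symmetric competitor $v$ with $\mathcal{E}(v) \le \mathcal{E}(u) = m$ and $v \in M(\psi)$; minimality then gives $\mathcal{E}(v) = m$, and $v$ is the desired symmetric minimizer.

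The main obstacle, as flagged, is the compatibility/regularity at the midpoint $x=\tfrac12$ when reflecting, and the cleanest route is to avoid ad hoc gluing and instead invoke the nonlinear Talenti inequality directly on the "elliptic" quantity $w_u$, where the smallness hypothesis $2\dot G + \mathrm{id}\cdot\ddot G>0$ on $[0,C_0]$ is used to ensure the relevant coefficient/nonlinearity is monotone, and the monotonicity of $\psi|_{[0,1/2]}$ together with its symmetry ensures the rearranged function still lies above $\psi$. A secondary, more technical point is verifying that Talenti's comparison — classically stated for linear second-order problems — transfers to the measure-valued equation \eqref{eq:elliptic} for $w$; this is handled by approximating the coincidence measure $\mu$ and passing to the limit, using the uniform $W^{1,\infty}$ bound on $w$ from Proposition \ref{prop:reguw}.
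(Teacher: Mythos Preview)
Your proposal starts down a path (reflecting the ``better half'') that you yourself correctly identify as broken: without $u'(\tfrac12)=0$ the glued function fails to lie in $W^{2,p}$, and you never actually close this gap. The attempt to show $\xi=\tfrac12$ is circular---that would already be the symmetry you are trying to prove---and the sketch you give for it is not an argument.

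When you pivot to a Talenti-type rearrangement, you are in the right neighborhood, but you apply it to the wrong object and misidentify the role of the smallness hypothesis. The paper's proof does \emph{not} rearrange $w_u$ or work with the elliptic equation \eqref{eq:elliptic}; instead it uses the key structural identity $\mathcal{E}(u)=\|G(u')'\|_{L^p}^p$. Setting $f:=G(u')'$ and taking its symmetric decreasing rearrangement $f_*$, one solves $G(v')'=f_*$ with $v(0)=v(1)=0$. Equimeasurability gives $\mathcal{E}(v)=\|f_*\|_{L^p}^p=\|f\|_{L^p}^p=\mathcal{E}(u)$ \emph{exactly}, so there is no inequality to fight for on the energy side. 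The work is entirely in showing that such a $v$ exists, is symmetric, and satisfies $v\ge u_*\ge\psi_*=\psi$ (the last equality because $\psi$ is symmetric with $\psi|_{[0,1/2]}$ increasing). This is the content of the nonlinear Talenti lemma cited from \cite{MariusFlow}; its hypotheses are (i) $\tfrac12\|f\|_{L^p}\le\|G\|_\infty$, which comes from the universal bound of Proposition~\ref{prop:univbound}, and (ii) convexity of $1/G^{-1}$ on $[0,G(\|u'\|_{L^\infty})]$. A direct computation shows
\[
\Big(\frac{1}{G^{-1}}\Big)''(s)=\frac{2\dot G(G^{-1}(s))+G^{-1}(s)\,\ddot G(G^{-1}(s))}{G^{-1}(s)^3\,\dot G(G^{-1}(s))^3},
\]
so the smallness condition $2\dot G+\mathrm{id}\cdot\ddot G>0$ on $[0,C_0]$ with $\|u'\|_{L^\infty}\le C_0$ is precisely what makes (ii) hold. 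It is a convexity condition on $1/G^{-1}$, not a monotonicity condition on a coefficient as you suggest, and no approximation of the coincidence measure $\mu$ is needed anywhere.
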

\begin{proof}
Let $u \in M(\psi)$ be as in the statement. Define $f := G(u')'$. Then $f \in L^p(0,1)$. Let $f_* \in L^p(0,1)$ be the \emph{symmetric decreasing rearrangement} of $f$, cf. \cite[Section~ 7.1]{MariusFlow}. Note in particular that $||f_*||_{L^p(0,1)} = ||f||_{L^p(0,1)}$.\\
\textbf{Intermediate claim.} There exists a 
solution $v \in W^{2,p}(0,1)$  of 
\begin{equation}
    \begin{cases}
        G(v')' = f_* & \textrm{ in } (0,1), \\ v(0) = v(1) = 0,
    \end{cases}
\end{equation}
which additionally satisfies 
\begin{itemize}
    \item $v = v(1-\cdot)$ (i.e. it is 
    symmetric),
    \item $v \geq u_*$, where $u_*$ is the symmetric decreasing rearrangement of $u$.
\end{itemize}
Notice that such $v$ lies in $M(\psi)$ since $v \geq u_* \geq \psi_* = \psi$.  Here we have used that the symmetric decreasing rearrangement is order-preserving (cf.  \cite[Section~3.3]{LiebLoss}) and $\psi_* = \psi$ since $\psi$ is symmetric and radially decreasing (due to the fact that $\psi\vert_{[0,\frac{1}{2}]}$ is increasing). Now 
\begin{equation}
    \mathcal{E}(v) = ||G(v')'||_{L^p(0,1)}^p = ||f_*||_{L^p(0,1)}^p = ||f||_{L^p(0,1)}^p = ||G(u')'||_{L^p(0,1)}^p = \mathcal{E}(u). 
\end{equation}
Therefore $v$ is a symmetric minimizer and thus the proof is finished once the intermediate claim is shown. The proof of the intermediate claim is an application of \cite[Lemma 6.3]{MariusFlow}, which can straightforwardly be generalized to the case $p \neq 2$. To check the prerequisites of \cite[Lemma 6.3]{MariusFlow} we need to ensure that $\frac{1}{2}||f||_{L^p(0,1)} \leq ||G||_\infty$ and that $\frac{1}{G^{-1}}$ is convex on $[0, G(||u'||_\infty)]$. \\
\textbf{Step 1}. We show first that $\frac{1}{2}||f||_{L^p(0,1)} \leq ||G||_\infty$. Indeed, Proposition \ref{prop:univbound} yields
\begin{equation}
    ||f||_{L^p(0,1)}^p = \mathcal{E}(u) \leq c_p(G)^p, 
\end{equation}
i.e., by Assumption \ref{ass:G}, $||f||_{L^p(0,1)} \leq c_p(G) = 2 \lim_{z \rightarrow \infty} G(z)= 2 ||G||_\infty$ and thus $\frac{1}{2}||f||_{L^p(0,1)} \leq ||G||_\infty$. \\
\textbf{Step 2}. We show convexity of $\frac{1}{G^{-1}}$ on $[0,G(||u'||_{L^\infty})]$. Notice that due to the assumption this is a subset of $[0, G(C_0)]$. 
We compute 
 \begin{equation}
     \left( \frac{1}{G^{-1}} \right)''(s) = \frac{1}{G^{-1}(s)^3 \dot{G}(G^{-1}(s))^3} ( 2 \dot{G}(G^{-1}(s)) + G^{-1}(s) \Ddot{G}(G^{-1}(s))) .
 \end{equation}
This expression is positive on $[0,G(C_0)]$ iff 
$
     (2 \dot{G} + x \cdot  \Ddot{G}) (G^{-1}(s)) > 0 
$
 for all $s \in [0,G(C_0)]$. Monotonicity of $G,G^{-1}$ implies the claim.
\end{proof}

\begin{remark}\label{rem:when2dotG+xDdotG>0}
 We remark that for 
 $G = EU_p$ one has $2 \dot{G}(x) +x \Ddot{G}(x) > 0$ for all $x \in \left[0, \sqrt{\frac{2p}{p-1}}\right).$
 \end{remark}
 
 \begin{cor} \label{Cor:symmetry_minimizer}
 Suppose that $\psi$ is symmetric and $\psi \vert_{[0,\frac{1}{2}]}$ is increasing.  Moreover, suppose that $\psi \leq u_c$ for some $c \in (0,\frac12 c_p(G))$ which is such that  $2 \dot{G}(x) + x \Ddot{G}(x) > 0$ on $[0,\sqrt[p]{G^{-1}(c^p)}]$. Then there exists a symmetric  minimizer in $M(\psi)$.
 \end{cor}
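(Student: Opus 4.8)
The plan is to obtain a minimizer via Theorem~\ref{thm:exinotopti} and then to check that this minimizer fulfils the hypotheses of Proposition~\ref{prop:simsmall}. Since $c<\tfrac12 c_p(G)$, the explicit function $u_c$ from \eqref{eq:uc} lies in $M(\psi)$ (it is smooth, vanishes at both endpoints, and satisfies $u_c\ge\psi$ by hypothesis), so by \eqref{eq:energyuc}
\begin{equation}
\inf_{u\in M(\psi)}\mathcal{E}(u)\le\mathcal{E}(u_c)=c^p<\tfrac{1}{2^p}c_p(G)^p,
\end{equation}
which is exactly the hypothesis \eqref{eq:inficondi} of Theorem~\ref{thm:exinotopti} (compare Remark~\ref{rem:plasticexistence}). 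That theorem then yields a minimizer $u_*\in M(\psi)$ which is a critical point in the sense of Definition~\ref{def:critical}; by Theorem~\ref{Theorem:1} it is concave with $u_*''<0$ on $(0,1)$, so in particular $u_*'$ is strictly decreasing with $u_*'(0)>0>u_*'(1)$.

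The key point is an a priori bound on $\|u_*'\|_{L^\infty}$ in terms of $c$. Reprising the estimate \eqref{eq:estimatedelta} for $u_*$: by Rolle's theorem there is $\xi\in(0,1)$ with $u_*'(\xi)=0$, and choosing $x_0$ with $|u_*'(x_0)|=\|u_*'\|_{L^\infty}$ one finds, using that $G$ is odd and increasing together with Jensen's inequality on $[0,1]$,
\begin{equation}
G(\|u_*'\|_{L^\infty})=|G(u_*'(x_0))|=\Bigl|\int_\xi^{x_0}(G(u_*'))'\dx\Bigr|\le\int_0^1|(G(u_*'))'|\dx\le\mathcal{E}(u_*)^{1/p}\le c.
\end{equation}
A careful version of this argument, taking into account the concavity of $u_*$ (so that $G(u_*')$ is monotone on either side of the unique zero of $u_*'$ and Jensen may be applied on each of the two resulting subintervals, both of length strictly less than $1$), gives the bound recorded in the statement, namely $\|u_*'\|_{L^\infty}\le\sqrt[p]{G^{-1}(c^p)}=:C_0$. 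Hence $[0,\|u_*'\|_{L^\infty}]\subset[0,C_0]$, and by assumption $2\dot{G}+\mathrm{id}\cdot\Ddot{G}>0$ on $[0,C_0]$.

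With this in hand all hypotheses of Proposition~\ref{prop:simsmall} are met: $\psi$ is symmetric with $\psi|_{[0,1/2]}$ increasing, and $u_*\in M(\psi)$ is a minimizer satisfying $\|u_*'\|_{L^\infty}\le C_0$ with $2\dot{G}+\mathrm{id}\cdot\Ddot{G}>0$ on $[0,C_0]$. Applying that proposition produces a minimizer $v\in M(\psi)$ that is additionally symmetric, which is the claim. The only delicate step is the slope bound: one has to verify that the energy control $\mathcal{E}(u_*)\le c^p$ really forces $\|u_*'\|_{L^\infty}$ into the interval $[0,\sqrt[p]{G^{-1}(c^p)}]$ on which the smallness condition $2\dot{G}+\mathrm{id}\cdot\Ddot{G}>0$ has been calibrated; granting this, Theorem~\ref{thm:exinotopti} and Proposition~\ref{prop:simsmall} are invoked verbatim.
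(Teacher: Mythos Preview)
Your approach is essentially the same as the paper's: obtain a minimizer from Theorem~\ref{thm:exinotopti} (via Remark~\ref{rem:plasticexistence}) and then feed it into Proposition~\ref{prop:simsmall} after establishing the slope bound. The only real difference is in how you derive the slope estimate. The paper integrates $G(u')'$ over the full interval $[0,1]$, uses that $u'(0)$ and $u'(1)$ have opposite signs (so $|G(u'(1))-G(u'(0))|=G(|u'(0)|)+G(|u'(1)|)\ge G(\max\{|u'(0)|,|u'(1)|\})$), and invokes concavity to identify $\|u'\|_{L^\infty}=\max\{|u'(0)|,|u'(1)|\}$. You instead pick $\xi$ with $u_*'(\xi)=0$ via Rolle and integrate from $\xi$ to a point where $|u_*'|$ is maximal. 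Both routes are valid and both yield $G(\|u_*'\|_{L^\infty})\le \mathcal{E}(u_*)^{1/p}\le c$, i.e.\ $\|u_*'\|_{L^\infty}\le G^{-1}(c)$.

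One caution: your paragraph about ``a careful version of this argument'' is hand-wavy and does not actually produce the stated bound $\sqrt[p]{G^{-1}(c^p)}$; splitting into subintervals and re-applying Jensen does not transform $G^{-1}(c)$ into $\sqrt[p]{G^{-1}(c^p)}$. In fact the paper's own ``simple rearrangement'' line has the same discrepancy---from $c^p\ge G(M)^p$ one gets $M\le G^{-1}(c)$, not $M\le \sqrt[p]{G^{-1}(c^p)}$. So you have matched the paper's argument faithfully, including this apparent slip in the final displayed constant; the substance of both proofs is the bound $\|u'\|_{L^\infty}\le G^{-1}(c)$.
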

 \begin{proof}
 It follows from Remark \ref{rem:plasticexistence} that there exists some minimizer $u \in M(\psi).$ In order to show that there exists also a symmetric minimizer, we apply Proposition~ \ref{prop:simsmall}. In order to do so it remains to ensure that $||u'||_{L^\infty}\leq C_0 := \sqrt[p]{G^{-1}(c^p)}$, where $c$ is as in the statement. Let $\mathcal{E}^* := \mathcal{E}(u)$. The fact that $u$ is minimizing and $\psi \leq u_c$ yields that $\mathcal{E}^* \leq c^p$. Now Jensen's inequality yields
 \begin{equation}\label{eq:estiJensen}
    c^p \geq \mathcal{E}^* =  \mathcal{E}(u)   \geq [G(u'(1))- G(u'(0))]^p \geq G(\max\{|u'(0)|,|u'(1)|\})^p, 
 \end{equation}
 as $G$ is odd and $u'(0),u'(1)$ have opposite signs. By concavity (cf. Corollary~ \ref{cor:concavity}), one has $||u'||_{L^\infty} = \max \{ |u'(0)|, |u'(1)| \}$ and thus a simple rearrangement of  \eqref{eq:estiJensen} yields
 \begin{equation}
     ||u'||_{L^\infty} = \max \{ |u'(0)|, |u'(1)| \} \leq \sqrt[p]{G^{-1}(c^p)}. \qedhere
 \end{equation}
 \end{proof}
\begin{remark}
 The previous corollary and  Remark \ref{rem:when2dotG+xDdotG>0} 
 yield that if $G = EU_p$, $\psi$ is symmetric, $\psi\vert_{[0,\frac{1}{2}]}$ is increasing and $\psi < u_{\hat{c}(p)}$ for $$\hat{c}(p) :=  \min \left\lbrace \frac{1}{2}c_p(EU_p), \sqrt[p]{EU_p((\tfrac{2p}{p-1})^{\frac{p}{2}})} \right\rbrace$$ then one finds a symmetric minimizer in $M(\psi)$. Due to Remark \ref{rem:increasingness}, $u_{\hat{c}(p)}$ is also the optimal choice 
 among all functions in $\{ u_c : c \in ( 0, c_p(G)) \}$  satisfying the conditions in Corollary \ref{Cor:symmetry_minimizer}. 
 \end{remark}

\begin{figure}[h!]
    \centering
\includegraphics[scale=0.7]{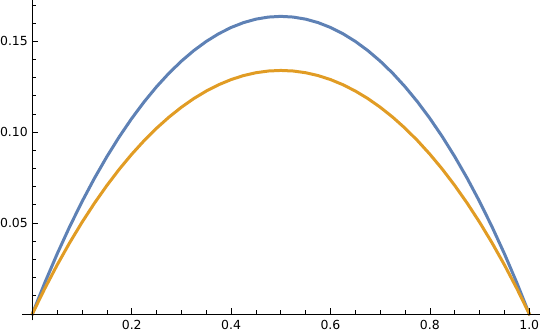}
    \caption{The
    function $u_{\hat{c}(p)}$ for $p = \frac{3}{2}$ (blue), $p = 17$ (yellow).}
    \label{fig:enter-label}
\end{figure}

This symmetry result shows that under suitable conditions on $G$ and $\psi$ minimization in $M(\psi)$ and minimization in the smaller class $M_{\rm sym}(\psi)$ yield the same result. In Section \ref{section:4} we will study minimization in $M_{\rm sym}(\psi)$ in detail and show further properties, such as uniqueness and explicit parametrization of minimizers, if $\psi$ is a \emph{cone obstacle}.

\section{Nonexistence for large cone obstacles}\label{sec:nonexistence}
In this section we will use the properties of minimizers that we studied to obtain a nonexistence result for suitably large symmetric cone obstacles (see Remark \ref{rem:symcone}). For this nonexistence result we do not have to require symmetry of minimizers, only the obstacle $\psi$ is required to satisfy $\psi= \psi(1-\cdot)$. Hence the tip of the cone is at $x=\frac12$ and $\theta $ in Remark \ref{rem:symcone} is equal to $\frac12$. We will derive a sharp bound on the $L^\infty$ norm of minimizers for general $G$, provided that  the supremum in the following lemma is finite. 

\begin{prop}[Nonexistence for  solutions for large obstacles]
\label{Lemma:non-existence}
Let $\psi$ be a symmetric cone obstacle. 
Suppose that $u \in M(\psi)$ is a minimizer. Then
\begin{equation}\label{eq:supfinite}
    ||u||_{L^\infty} \leq \frac{1}{2}\sup_{A  > 0} \frac{\int_0^A \frac{s\dot{G}(s)}{(A-s)^\frac{1}{p}}  \ds }{ \int_0^A \frac{\dot{G}(s)}{(A-s)^\frac{1}{p}} \ds }.
\end{equation}
In particular, if $||\psi^+||_{L^\infty}$ is larger than the right hand side in \eqref{eq:supfinite}, then there does not exist a minimizer $u \in M(\psi)$.
\end{prop}
\begin{proof}
Let $x_0 \in (0,1)$ be a point where $u'(x_0)= 0$. Such $x_0$ exists due to the boundary conditions $u(0)= u(1) = 0$. Without loss of generality we may assume $x_0 \in (0,\frac{1}{2}]$, otherwise we may consider $u(1-\cdot)$, which is also a minimizer. By concavity (cf. Corollary~ \ref{cor:concavity}) one concludes that $u'( \frac{1}{2}) \leq 0$. 
Recall from Propositions~\ref{prop:regu1} and \ref{prop:regu2} that $u \in W^{3,q}(0,1)$ for some $q >1$ and that since $u> \psi$ on $[0,1] \setminus \{\frac12\}$
\begin{equation}
    \dot{G}(u'(x)) w'(x) \equiv \mathrm{const.} \quad \textrm{on $(0,\tfrac{1}{2})$},
\end{equation}
where $w$ is Euler's substitution. 
This in particular yields some $B \in \mathbb{R}$ such that 
\begin{equation}
 w'(x) = \frac{B}{\dot{G}(u'(x))}  \qquad \forall x  \in (0 , \tfrac{1}{2}).
\end{equation}

We can now multiply both sides with $w(x)^\frac{1}{p-1} = p^{\frac{1}{p-1}}\dot{G}(u'(x)) (-u''(x))$ to get
\begin{equation}
    w'(x) w^\frac{1}{p-1}(x) =-  p^{\frac{1}{p-1}} B u''(x)  . 
\end{equation}
Integrating once and using $w(0) = 0$ yields 
\begin{equation}
  \frac{p-1}{p}  w(x)^\frac{p}{p-1} = -p^\frac{1}{p-1}B (u'(x) - u'(0)) = p^\frac{1}{p-1}B (u'(0) - u'(x)). \label{eq:w_explicit} 
\end{equation}
Note that $u'(x) < u'(0)$ for all $x> 0$ since by Proposition \ref{prop:deg} one has $u''(x)< 0$ for all $x \in (0, 1)$. This fact also implies that $B> 0$.
Rewriting now in terms of $u$ we find
\begin{equation}
 (p-1)(-u''(x))^p \dot{G}(u'(x))^p = B (u'(0) - u'(x)).
\end{equation}
Taking the $p$-th root we infer 
\begin{equation}
    \frac{-u''(x) \dot{G}(u'(x))}{(u'(0)- u'(x))^\frac{1}{p}} = 
    \Bigl(\frac{1}{p-1}B \Bigr)^\frac{1}{p} \quad \forall x \in (0, \tfrac{1}{2}) 
\end{equation}
and hence 
\begin{equation}\label{eq:Fcircu}
    \int_{u'(x)}^{u'(0)} \frac{\dot{G}(s)}{(u'(0)- s)^\frac{1}{p}} \ds = 
    \Bigl(\frac{1}{p-1}B \Bigr)^\frac{1}{p} x
    \quad \forall x \in [0, \tfrac{1}{2}].  
\end{equation}
Let now $F : (-\infty, u'(0)] \rightarrow \mathbb{R}$ be given by
\begin{equation}
    F(y) := 
     \Bigl(\frac{p-1}{B} \Bigr)^{\frac{1}{p}}
    \int_{y}^{u'(0)} \frac{\dot{G}(s)}{(u'(0)- s)^\frac{1}{p}} \ds .
\end{equation}
Then $F$ is invertible as $F' < 0$ on $(-\infty,u'(0))$. 
Further,  $F(u'(0)) = 0$ since 
\begin{equation}
    F(y) \leq C \frac{1}{B} ||\dot{G}||_\infty  (u'(0)-y)^{1- \frac{1}{p}}. 
 \end{equation}
 Moreover, $u'(x) = F^{-1}(x)$ for all $x \in [0, \frac{1}{2}]$, as \eqref{eq:Fcircu} shows. Next we determine the value of $\big(\frac{p-1}{B} \big)^{\frac{1}{p}}$. 
 To this end we look at 
 \begin{equation}
     \Bigl(\frac{p-1}{B} \Bigr)^{\frac{1}{p}}\int_{0}^{u'(0)} \frac{\dot{G}(s)}{(u'(0)- s)^\frac{1}{p}} \ds  = F(0) = F(u'(x_0)) = x_0 \, .
 \end{equation}
 This yields 
 \begin{align}
     u(x)&   = \int_0^x F^{-1}(y) \dy = \int_{F^{-1}(0)}^{F^{-1}(x)} y F'(y) \dy = \Bigl(\frac{p-1}{B} \Bigr)^{\frac{1}{p}} 
     \int_{u'(x)}^{u'(0)} \frac{y \dot{G}(y)}{(u'(0)-y)^\frac{1}{p}} \dy  
    \\  &  = x_0 \frac{1}{\int_{0}^{u'(0)} \frac{\dot{G}(s)}{(u'(0)- s)^\frac{1}{p}} \ds  } \int_{u'(x)}^{u'(0)} \frac{y \dot{G}(y)}{(u'(0)- y)^\frac{1}{p}} \dy .
 \end{align}
 Since $u$ is concave, $u$ attains a global maximum value at each point $z \in (0,1)$ with $u'(z)= 0$. In particular (since $x_0 \leq \frac{1}{2})$
 \begin{equation}
     ||u||_\infty = u(x_0) =   \frac{x_0}{\int_{0}^{u'(0)} \frac{\dot{G}(s)}{(u'(0)- s)^\frac{1}{p}} \ds  } \int_{0}^{u'(0)} \frac{y \dot{G}(y)}{(u'(0)- y)^\frac{1}{p}} \dy \leq \frac{1}{2} \frac{\int_{0}^{u'(0)} \frac{s \dot{G}(s)}{(u'(0)- y)^\frac{1}{p}} \; \mathrm{d}s }{ \int_0^{u'(0)} \frac{\dot{G}(s)}{(u'(0)- s)^\frac{1}{p}} \ds}.
 \end{equation}
 Thereupon \eqref{eq:supfinite} follows. For the last sentence of the statement observe that if $u \in M(\psi)$ is a minimizer then $||u||_{L^\infty} \geq ||\psi^+||_{L^\infty}$. If now $||\psi^+||_{L^\infty}$ is larger than the right hand side in \eqref{eq:supfinite} this yields a contradiction. 
\end{proof}

The bound in \eqref{eq:supfinite} is only nontrivial provided that the supremum in \eqref{eq:supfinite} is finite. This is the content in the following lemma whose proof we postpone to the appendix. 

\begin{lemma}[Boundedness of \eqref{eq:supfinite}, Proof in Appendix \ref{ref:app}]\label{lem:6.2}
Let $G$ satisfy Assumption \ref{ass:G} and assume additionally that  there exists $\epsilon >0$ such that 
\begin{equation}\label{eq:asymptG}
    \lim_{z \rightarrow \infty}  z^{2+\epsilon} \dot{G}(z) = 0 .
\end{equation}
Then the function 
\begin{equation}
  H : (0,\infty) \rightarrow \mathbb{R}, \quad   H(A) := \frac{\int_0^A \frac{s\dot{G}(s)}{(A-s)^\frac{1}{p}}  \ds }{ \int_0^A \frac{\dot{G}(s)}{(A-s)^\frac{1}{p}} \ds }
\end{equation}
is continuous, bounded on $(0,\infty)$ and satisfies 
\begin{equation}
    \lim_{A \rightarrow 0} H(A) = 0, \qquad \lim_{A \rightarrow \infty} H(A) = \frac{2}{c_p(G)}{\int_0^\infty s \dot{G}(s) \; \mathrm{d}s}. 
 \end{equation}
\end{lemma}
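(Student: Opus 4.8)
The plan is to study $H$ via the rescaling $s = At$, which separates the parameter $A$ from the domain of integration, and then to treat the two boundary limits separately; the limit $A\to\infty$ is the delicate one and is where the strengthened hypothesis \eqref{eq:asymptG} enters.

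First I would record well-definedness and continuity. Since $p>1$ we have $\tfrac1p<1$, so $s\mapsto (A-s)^{-1/p}$ is integrable on $(0,A)$; together with $\dot G>0$ continuous this shows that the numerator $N(A):=\int_0^A s\dot G(s)(A-s)^{-1/p}\ds$ and the denominator $D(A):=\int_0^A \dot G(s)(A-s)^{-1/p}\ds$ are finite and strictly positive, so $H=N/D$ is well-defined on $(0,\infty)$. Substituting $s=At$ gives
\begin{equation*}
    N(A)=A^{2-\frac1p}\int_0^1 \frac{t\,\dot G(At)}{(1-t)^{1/p}}\dt,\qquad D(A)=A^{1-\frac1p}\int_0^1 \frac{\dot G(At)}{(1-t)^{1/p}}\dt,
\end{equation*}
so $H(A)=A\,\big(\int_0^1 t\dot G(At)(1-t)^{-1/p}\dt\big)\big(\int_0^1 \dot G(At)(1-t)^{-1/p}\dt\big)^{-1}$. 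On any compact $[a,b]\subset(0,\infty)$ the two integrands are bounded by $(\max_{[0,b]}\dot G)\,(1-t)^{-1/p}\in L^1(0,1)$ and depend continuously on $A$; hence dominated convergence yields continuity of the two integrals in $A$, and as the denominator is positive, $H\in C^0((0,\infty))$. The same dominated-convergence argument with $\dot G(At)\to\dot G(0)>0$ shows that as $A\to 0$ the quotient of integrals tends to the finite positive constant $\int_0^1 t(1-t)^{-1/p}\dt\,/\int_0^1(1-t)^{-1/p}\dt$, whence $H(A)\to 0$ because of the prefactor $A$.

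The hard part is $A\to\infty$. I would prove the two one-sided asymptotics
\begin{equation*}
    A^{1/p}D(A)\longrightarrow \int_0^\infty\dot G(s)\ds=\tfrac12 c_p(G),\qquad A^{1/p}N(A)\longrightarrow \int_0^\infty s\dot G(s)\ds,
\end{equation*}
from which $H(A)=A^{1/p}N(A)/A^{1/p}D(A)$ converges to $\tfrac{2}{c_p(G)}\int_0^\infty s\dot G(s)\ds$, as claimed; here the last equality in the first display uses that $G$ is odd, increasing and bounded, so $\int_0^\infty\dot G=\lim_{z\to\infty}G(z)=\tfrac12 c_p(G)$, while $\int_0^\infty s\dot G(s)\ds<\infty$ by Assumption \ref{ass:G}. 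Using $A^{1/p}(A-s)^{-1/p}=(1-s/A)^{-1/p}$ we have $A^{1/p}D(A)=\int_0^A\dot G(s)(1-s/A)^{-1/p}\ds$, and I would split this integral at $A/2$. On $(0,A/2)$ one has $1\le(1-s/A)^{-1/p}\le 2^{1/p}$ with $(1-s/A)^{-1/p}\to 1$ pointwise, so dominated convergence with dominating function $2^{1/p}\dot G\in L^1(0,\infty)$ (which holds since \eqref{eq:asymptG} forces $\dot G(z)=o(z^{-2-\epsilon})$) gives contribution $\int_0^\infty\dot G(s)\ds$. On $(A/2,A)$ I substitute $s=A-r$ to obtain $A^{1/p}\int_0^{A/2}\dot G(A-r)r^{-1/p}\dr$, bound $\dot G(A-r)\le (A/2)^{-(2+\epsilon)}\eta(A/2)$ where $\eta(R):=\sup_{z\ge R}z^{2+\epsilon}\dot G(z)\to 0$, and integrate $r^{-1/p}$ over $(0,A/2)$; the resulting power of $A$ is $A^{1/p}\cdot A^{-(2+\epsilon)}\cdot A^{1-1/p}=A^{-1-\epsilon}$, so this piece is $O(A^{-1-\epsilon})\eta(A/2)\to 0$. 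The computation for $N$ is identical, with the integrable dominating function $2^{1/p}s\dot G(s)$ on $(0,A/2)$ and the crude bound $s\le A$ on $(A/2,A)$, which turns the boundary estimate into $O(A^{-\epsilon})\eta(A/2)\to 0$.

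Finally, boundedness is immediate: $H$ is continuous on $(0,\infty)$ and has finite limits at $0$ and $\infty$, hence is bounded on some $(0,\delta]$ and some $[M,\infty)$ by those limits and bounded on $[\delta,M]$ by continuity on a compact set. The only genuinely delicate point in the whole argument is the contribution from the neighbourhood of $s=A$ in the $A\to\infty$ analysis, where the kernel $(A-s)^{-1/p}$ is singular: it is exactly here that \eqref{eq:asymptG} is needed to dominate the factor $A^{1/p}$ coming from the substitution and force the boundary term to vanish, whereas the weaker requirement $\mathrm{id}\cdot\dot G\in L^1$ from Assumption \ref{ass:G} would not suffice.
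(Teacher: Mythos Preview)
Your proof is correct and follows essentially the same strategy as the paper: rescale $s=At$ for continuity, and for $A\to\infty$ multiply by $A^{1/p}$ and split the integral near the singular endpoint, using dominated convergence on the bulk and the decay hypothesis \eqref{eq:asymptG} on the tail. The one noteworthy difference is purely technical: the paper splits at $A^\alpha$ with $\alpha=1/(1+\epsilon)$ and handles the tail via the algebraic inequality $A=(A^\alpha)^{1/\alpha}\le x^{1/\alpha}$ for $x\ge A^\alpha$, whereas you split at $A/2$ and use the substitution $s=A-r$. Your choice is a bit cleaner, since the domination on $(0,A/2)$ is immediate (the factor $(1-s/A)^{-1/p}$ is bounded by $2^{1/p}$) and the tail estimate reduces to a direct power count, while the paper must first argue that $(1-A^{\alpha-1})^{-1/p}$ stays bounded; on the other hand the paper's argument avoids the change of variables. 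Your treatment of $A\to 0$ via the limit of the quotient is also slightly different from the paper's one-line estimate $0\le H(A)\le A$, but both are straightforward.
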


\begin{remark}
We remark that for each $p \in (1,\infty)$ the $p$-elastica functional $G= EU_p$ satisfies \eqref{eq:asymptG} for any $\epsilon \in (0, 1- \frac{1}{p})$. Hence this nonexistence result applies to the $p$-elastica functional.
\end{remark}
\begin{remark}
Under additional assumptions, 
$H$ can be shown to be (\emph{eventually}) \emph{increasing}, e.g. in the case of $p = 2$ and $G = EU_2$, cf. \cite[Lemma~7.6]{MariusFlow}. If this monotonicity can be shown, then the previous lemma yields actually an \emph{explicit} bound for nonexistence, namely  $\frac{1}{2}H(\infty) = \frac{1}{c_p(G)} ||x \cdot \dot{G}||_{L^1(0,\infty)}$, cf. \eqref{eq:supfinite} and Lemma~\ref{lem:6.2}. In the case of $G = EU_p$, the same sharp bounds can also be obtained using the geometry of $p$-elastic curves, which we pursue in Section \ref{section:4}. 
\end{remark}

\section{Uniqueness of symmetric minimizers} \label{section:4}
In this section we consider problem \eqref{obstacle_problem:1.1}, i.e., the minimization problem 
\[
\inf_{v \in M_{\rm sym}(\psi)} \mathcal{E}_p(v), 
\]
where the functional $\mathcal{E}_p$ and the admissible set $M_{\rm sym}(\psi)$ are respectively defined by \eqref{eq:p-elastic_energy} and \eqref{eq:msym}, and $\psi$ denotes a cone obstacle. 
Here we recall that the $p$-elastic energy $\mathcal{E}_p$ is given by the functional $\mathcal{E}$ with $G=EU_p$,  which is defined by \eqref{eq:EU_p}. 

\subsection{Existence of minimizers} \label{section:6.1}
Following the strategy in Section \ref{section:existence-mini}
we first prove the existence of minimizers of problem \eqref{obstacle_problem:1.1}. 
Taking $G=EU_p$ in \eqref{eq:defcp}, we define the constant $c_p$ by 
\begin{equation} \label{def_cp_sec6}
c_p := c_p(EU_p) = 2 \int^\infty_0 \dfrac{1}{(1+t^2)^{\frac{3}{2} - \frac{1}{2p}}} \dt.  
\end{equation}

\begin{theorem} \label{thm:sym-existence}
Let $\psi : [0,1] \to \mathbb{R}$ satisfy Assumption~{\rm \ref{ass:psi}} and 
suppose that 
\begin{equation} \label{eq:smallness_220115}
\inf_{v \in M_{\rm sym}(\psi)}\mathcal{E}_p(v)<c_p^p. 
\end{equation}
Then there exists a minimizer $u \in M_{\rm sym}(\psi)$ of $\mathcal{E}_p$ in $M_{\rm sym}(\psi)$.
\end{theorem}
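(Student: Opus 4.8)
The plan is to run the direct method of the calculus of variations, following the proof of Theorem~\ref{thm:exinotopti} but exploiting the symmetry of the admissible class to relax the smallness threshold from $\tfrac{1}{2^p}c_p^p$ to $c_p^p$. Write $G:=EU_p$, so that $\mathcal{E}_p=\mathcal{E}$ and $\mathcal{E}_p(u)=\int_0^1\dot G(u')^p|u''|^p\dx$, and recall (cf.\ Assumption~\ref{ass:G} applied to $EU_p$ and \eqref{def_cp_sec6}) that $G:\mathbb{R}\to(-\tfrac{c_p}{2},\tfrac{c_p}{2})$ is an odd increasing bijection with $\lim_{t\to\infty}G(t)=\tfrac{c_p}{2}$. (The hypothesis \eqref{eq:smallness_220115} presupposes $M_{\rm sym}(\psi)\neq\emptyset$; alternatively, the comparison map $u_{\delta,c}$ built in Proposition~\ref{prop:univbound} is symmetric and hence already lies in $M_{\rm sym}(\psi)$.) I would take a minimizing sequence $(u_n)_{n\in\mathbb{N}}\subset M_{\rm sym}(\psi)$ and fix $\delta>0$, $N\in\mathbb{N}$ with $\mathcal{E}_p(u_n)\le c_p^p-\delta$ for $n\ge N$.

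\textbf{Step 1 (uniform $W^{1,\infty}$-bound --- the crucial point).} Symmetry $u_n(x)=u_n(1-x)$ yields $u_n'(x)=-u_n'(1-x)$, so $u_n'(\tfrac12)=0$ and $|u_n'|$ is symmetric about $\tfrac12$ while $u_n'$ changes sign there. Pick $x_n\in[0,\tfrac12]$ with $|u_n'(x_n)|=\|u_n'\|_{L^\infty}$. Since $G$ is odd and $G(u_n')'\in L^p$, Hölder's inequality gives
\[
2\,G\bigl(\|u_n'\|_{L^\infty}\bigr)=\bigl|G(u_n'(x_n))-G(u_n'(1-x_n))\bigr|=\Bigl|\int_{x_n}^{1-x_n}G(u_n')'\dx\Bigr|\le\mathcal{E}_p(u_n)^{1/p}.
\]
Hence $G(\|u_n'\|_{L^\infty})\le\tfrac12(c_p^p-\delta)^{1/p}<\tfrac{c_p}{2}$ for $n\ge N$, and since $G$ is an increasing bijection onto $(-\tfrac{c_p}{2},\tfrac{c_p}{2})$ there is $C_1<\infty$ with $\|u_n'\|_{L^\infty}\le C_1$ for all $n\ge N$. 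This is exactly where the improved threshold enters: in the nonsymmetric setting one only controls $G(\|u_n'\|_{L^\infty})$ by $\mathcal{E}_p(u_n)^{1/p}$ (no factor $2$, and only half the interval is available), which forces the stronger requirement $\mathcal{E}_p(u_n)<\tfrac{1}{2^p}c_p^p$ in Theorem~\ref{thm:exinotopti}.

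\textbf{Steps 2--3 (compactness and lower semicontinuity).} These are verbatim repetitions of the corresponding parts of the proof of Theorem~\ref{thm:exinotopti}. Since $\dot G$ is bounded below on $[-C_1,C_1]$ by a positive constant, $\mathcal{E}_p(u_n)\ge(\inf_{[-C_1,C_1]}\dot G)^p\|u_n''\|_{L^p}^p$ controls $\|u_n''\|_{L^p}$, and $\|u_n\|_{L^p}\le\|u_n'\|_{L^\infty}\le C_1$ by the zero boundary data; thus $(u_n)$ is bounded in $W^{2,p}(0,1)$, and along a subsequence $u_n\rightharpoonup u_*$ in $W^{2,p}(0,1)$ and $u_n\to u_*$ in $C^1([0,1])$. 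The $C^1$-convergence gives $u_*(0)=u_*(1)=0$, $u_*\ge\psi$ and $u_*(x)=u_*(1-x)$, so $u_*\in M_{\rm sym}(\psi)$. For lower semicontinuity I would use that $u_n''\rightharpoonup u_*''$ in $L^p$ and $\dot G(u_n')\to\dot G(u_*')$ uniformly imply $\dot G(u_n')u_n''\rightharpoonup\dot G(u_*')u_*''$ in $L^p$; testing against $\dot G(u_*')^{p-1}|u_*''|^{p-2}u_*''\in L^{p/(p-1)}$ and applying Hölder gives $\mathcal{E}_p(u_*)\le\mathcal{E}_p(u_*)^{(p-1)/p}\liminf_n\mathcal{E}_p(u_n)^{1/p}$, hence $\mathcal{E}_p(u_*)\le\liminf_n\mathcal{E}_p(u_n)=\inf_{v\in M_{\rm sym}(\psi)}\mathcal{E}_p(v)$. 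Since $u_*\in M_{\rm sym}(\psi)$, equality holds and $u_*$ is the desired minimizer.

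\textbf{Main obstacle.} Everything but Step~1 is a mechanical transcription of Theorem~\ref{thm:exinotopti}, so the real work is Step~1 --- and within it the strict inequality $\tfrac12(c_p^p-\delta)^{1/p}<\tfrac{c_p}{2}$, which rests on the fact that $EU_p$ never attains its supremum $\tfrac{c_p}{2}$. This strictness is precisely what upgrades the \emph{strict} smallness hypothesis \eqref{eq:smallness_220115} into an honest uniform gradient bound, thereby excluding minimizing sequences whose graphs escape to infinity along a thin spike (the mechanism behind nonexistence for large $\psi(\tfrac12)$, cf.\ Section~\ref{sec:nonexistence}).
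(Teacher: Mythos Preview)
Your proposal is correct and follows essentially the same approach as the paper's proof: the key Step~1 exploits the symmetry $u_n'(1-x)=-u_n'(x)$ together with the oddness of $G=EU_p$ to gain the factor $2$ in the estimate $2G(\|u_n'\|_{L^\infty})\le\mathcal{E}_p(u_n)^{1/p}$, exactly as the paper does, and the remaining compactness/lower semicontinuity steps are, as you say, verbatim from Theorem~\ref{thm:exinotopti}. The only cosmetic differences are that the paper derives the bound for every $x$ rather than just at the maximum point $x_n$, and phrases the energy bound as $(c_p-2\delta)^p$ instead of $c_p^p-\delta$.
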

\begin{proof}
Let $\{ u_k \}_{k \in \mathbb{N}} \subset M_{\rm sym}(\psi)$ be a minimizing sequence for $\mathcal{E}_p$.
By assumption~\eqref{eq:smallness_220115} we may assume that  
\begin{equation}\label{eq:0907-1}
\mathcal{E}_p(u_k) \leq (c_p-2\delta)^p  \quad  \text{for all} \quad k \in \mathbb{N}
\end{equation}
for some $\delta>0$. 
Since $EU_p$ is an odd function, we have 
\[
EU_p(u'_k(x))=EU_p(-u'_k(1-x))=-EU_p(u'_k(1-x))
\]
for all $x \in [0,1]$ and $k \in \N$, 
and then 
\begin{align*}
2EU_p(|u'_k(x)|)&=2|EU_p(u'_k(x))| \\
&= | EU_p(u'_k(x))-EU_p(u'_k(1-x)) | \\
&=\Bigl| \int_x^{1-x} EU'_p(u_k'(\xi)) u_k''(\xi) \dxi \Bigr| \\
&\leq \Bigl( \int_0^{1} |EU'_p(u_k'(\xi))|^p |u_k''(\xi)|^p \dxi \Bigr)^{\frac{1}{p}} 
= \mathcal{E}_p(u_k)^{\frac{1}{p}}
\end{align*}
for all $k \in \N$. 
This together with \eqref{eq:0907-1} yields a uniform estimate
\begin{equation} \label{eq:0907-2}
\|u_k'\|_{L^{\infty}(0,1)} \leq EU_p^{-1}(\tfrac{c_p}{2}-\delta)<\infty \quad \text{for all} \quad k \in \N.
\end{equation}
Then, along the same lines as in the proof of Theorem~ \ref{thm:exinotopti}, we complete the proof of Theorem~ \ref{thm:sym-existence}. 
\end{proof}

Let $u \in M_{\rm sym}(\psi)$ be a minimizer of $\mathcal{E}_p$ in $M_{\rm sym}(\psi)$, which is obtained by Theorem~\ref{thm:sym-existence}.  
Similar to \eqref{eq:varpos}, we see that
\begin{align}\label{eq:pre_sym_ineq}
D\mathcal{E}_p(u)(v-u) \geq0\quad \text{for any} \quad v\in \Msym(\psi).
\end{align}
While the admissible set is restrictive, employing the same argument as in \cite[Lemma 2.1]{Y2021}, we improve \eqref{eq:pre_sym_ineq} to  
\begin{align}\label{eq:sym_ineq}
D\mathcal{E}(u)(v-u) \geq0\quad \text{for any} \quad v\in M(\psi).
\end{align}
(For a proof of \eqref{eq:sym_ineq}, see Appendix \ref{appendix6}.)
Recalling that $\mathcal{E}_p$ is given by $\mathcal{E}$ with $G=EU_p$, we observe from \eqref{eq:sym_ineq} and Theorem~\ref{thm:exinotopti} that $u$ is a critical point of $\mathcal{E}_p$ in $M(\psi)$ 
in the sense of Definition~ \ref{def:critical}. 
Then, Corollary \ref{cor:concavity} implies the concavity of $u$, and Propositions \ref{prop:regu1} and \ref{prop:regu2} give us the optimal regularity of $u$. 
Moreover, we notice that the (measure-valued) Euler--Lagrange equation \eqref{eq:solELeq} becomes 
\begin{equation}\label{eq:solELeq-EU_p}
    -\int_0^1  \Bigl[ p  \frac{ |\kappa_u|^{p-2}\kappa_u}{1+(u')^2} \phi'' + (1-3p) \frac{|\kappa_u |^{p} u'}{\sqrt{1+(u')^2}} \phi' \Bigr] \dx = \int \phi \; \mathrm{d}\mu. 
\end{equation}
As we mentioned in Section \ref{section:intro}, if $\gamma$ is a critical point of $\mathcal{E}_p$ and parameterized by arc length, then the curvature $\kappa$ satisfies \eqref{EL-eq:1.2} in the sense of distributions. 
Thus it is quite natural that we have: 

\begin{lemma}[Proof in Appendix \ref{ref:app}]\label{p-EL}
Let $u \in W^{3,q}(0,1)$ for some $q>1$ and $\tilde{\kappa}_u$ be the curvature of ${\rm graph}(u)$ parameterized by arc length, i.e., 
\[
\tilde{\kappa}_u(\mathbf{s}):= \kappa_u(x(\mathbf{s})),
\]
where $x(\mathbf{s})$ is the inverse of $\mathbf{s}(x) = \int_0^x \sqrt{1+u'(\xi)^2}\dxi$. 

If $\kappa_u$ satisfies on $E:=(x_1, x_2) \subset (0,1)$
\begin{equation} 
\label{eq:0709-2}
\int^{x_2}_{x_1}  \Bigl[ p  \frac{ |\kappa_u|^{p-2}\kappa_u}{1+(u')^2} \phi'' + (1-3p) \frac{|\kappa_u |^{p} u'}{\sqrt{1+(u')^2}} \phi' \Bigr] \dx =0 
\end{equation}
for all $\phi \in C^{\infty}_0(x_1, x_2)$, then $\tilde{\kappa}_u$ satisfies 
\begin{equation} 
\label{eq:0710-4}
\int^{\mathbf{s}(x_2)}_{\mathbf{s}(x_1)}  \bigl[ p |\tilde{\kappa}_u|^{p-2} \tilde{\kappa}_u \varphi'' + (p-1) |\tilde{\kappa}_u|^{p}\tilde{\kappa}_u \varphi \bigr]\, \mathrm{d}\mathbf{s} =0
\end{equation}
for all $ \varphi \in C^{\infty}_0(\mathbf{s}(x_1), \mathbf{s}(x_2))$.
\end{lemma}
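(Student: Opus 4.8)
The plan is to perform the arc-length change of variables $x \mapsto \mathbf{s}(x)$ directly in the weak formulation \eqref{eq:0709-2}, transporting test functions $\varphi \in C_0^\infty(\mathbf{s}(x_1),\mathbf{s}(x_2))$ to test functions on $(x_1,x_2)$ via $\phi(x) := \varphi(\mathbf{s}(x))$. First I would record the basic differential relations: writing $\sigma(x) := \sqrt{1+u'(x)^2}$, one has $\mathbf{s}'(x) = \sigma(x)$, so $\frac{d}{dx} = \sigma \frac{d}{d\mathbf{s}}$, and also the curvature identity $\kappa_u(x) = u''(x)/\sigma(x)^3$, i.e. $u''(x) = \kappa_u(x)\sigma(x)^3$. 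The regularity hypothesis $u \in W^{3,q}(0,1)$ with $q>1$ guarantees $u \in C^2([0,1])$ with $u'' $ absolutely continuous, hence $\kappa_u \in W^{1,q}_{loc}$ and $\sigma \in W^{1,q}_{loc}$ along $(x_1,x_2)$, which is exactly enough regularity to justify all the integrations by parts and chain-rule manipulations below (and to make sense of $\tilde\kappa_u$ as a $W^{1,q}_{loc}$ function of $\mathbf{s}$).

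Next I would compute how $\phi',\phi''$ transform. From $\phi(x) = \varphi(\mathbf{s}(x))$ we get $\phi'(x) = \sigma(x)\,\varphi'(\mathbf{s}(x))$ and $\phi''(x) = \sigma(x)^2 \varphi''(\mathbf{s}(x)) + \sigma'(x)\varphi'(\mathbf{s}(x))$, where $\sigma'(x) = u'(x)u''(x)/\sigma(x)$. Substituting these into the left-hand side of \eqref{eq:0709-2}, together with $|\kappa_u|^{p-2}\kappa_u/(1+(u')^2) = |\tilde\kappa_u|^{p-2}\tilde\kappa_u/\sigma^2$ evaluated along the curve, and $|\kappa_u|^p u'/\sqrt{1+(u')^2} = |\tilde\kappa_u|^p u'/\sigma$, one obtains an integral over $(x_1,x_2)$ of terms of the form (coefficient depending on $\mathbf{s}$)$\times\varphi''(\mathbf{s}(x))$ plus (coefficient)$\times\varphi'(\mathbf{s}(x))$. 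Then I would apply the substitution rule $\int_{x_1}^{x_2} F(\mathbf{s}(x))\,\sigma(x)\dx = \int_{\mathbf{s}(x_1)}^{\mathbf{s}(x_2)} F(\mathbf{s})\,\mathrm{d}\mathbf{s}$, pulling out one factor of $\sigma$, to rewrite everything as an integral in $\mathbf{s}$. The $\varphi''$ term contributes $\int p|\tilde\kappa_u|^{p-2}\tilde\kappa_u \varphi''\,\mathrm{d}\mathbf{s}$ after the $\sigma^2/\sigma^2$ cancellations; the remaining terms, involving $\varphi'$, carry coefficients built from $\sigma'$, $u'$, and $|\tilde\kappa_u|^p$.

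The main obstacle — and the part requiring genuine care — is showing that the collected $\varphi'$-terms reassemble, after one integration by parts in $\mathbf{s}$, into exactly $\int (p-1)|\tilde\kappa_u|^p\tilde\kappa_u\,\varphi\,\mathrm{d}\mathbf{s}$, matching \eqref{eq:0710-4}; equivalently, one must verify that a certain $\mathbf{s}$-derivative of the $\varphi'$-coefficient, minus the extra piece coming from differentiating the $\sigma$-weight inside the $\varphi''$-term, collapses to $(p-1)|\tilde\kappa_u|^p\tilde\kappa_u$. Concretely I would, after the change of variables, integrate the $\varphi''$ term by parts once in $\mathbf{s}$ (legitimate since $p|\tilde\kappa_u|^{p-2}\tilde\kappa_u \in W^{1,q}_{loc}$, with its $\mathbf{s}$-derivative equal to $p(p-1)|\tilde\kappa_u|^{p-2}\tilde\kappa_u'$ where it makes sense, using $\frac{d}{d\mathbf{s}}|\tilde\kappa_u|^{p-2}\tilde\kappa_u = (p-1)|\tilde\kappa_u|^{p-2}\tilde\kappa_u'$), thereby converting it to a $\varphi'$-integral, and then combine with the genuine $\varphi'$-terms; the sum should be a perfect $\mathbf{s}$-derivative whose integration by parts produces the $\varphi$-term with the correct constant $(p-1)$ and the correct power $|\tilde\kappa_u|^p\tilde\kappa_u$. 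The algebraic identity underlying this reduction is precisely the one Euler exploited (our "Euler's substitution" in disguise), and it is known to hold for smooth curves; the only novelty here is keeping track that $W^{3,q}$-regularity suffices to run every step in the Sobolev (a.e.) sense rather than classically. Finally, since $\varphi \in C_0^\infty(\mathbf{s}(x_1),\mathbf{s}(x_2))$ was arbitrary and no boundary terms survive (compact support), this yields \eqref{eq:0710-4}, completing the proof.
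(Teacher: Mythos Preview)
Your overall strategy (pull back an arc-length test function via $\mathbf{s}$ and do the algebra) is the right idea, but your choice of test function $\phi(x)=\varphi(\mathbf{s}(x))$ does \emph{not} make the algebra close the way you claim. Write $\sigma=\sqrt{1+(u')^2}$, $c=1/\sigma=\cos\theta$, $s=u'/\sigma=\sin\theta$, $K=|\tilde\kappa_u|^{p-2}\tilde\kappa_u$. Plugging $\phi=\varphi\circ\mathbf{s}$ into \eqref{eq:0709-2} and changing variables gives
\[
\int\Bigl[pKc\,\varphi'' + (2p-1)Kc'\,\varphi'\Bigr]\,\mathrm{d}\mathbf{s}=0,
\]
so the $\varphi''$-coefficient is $pKc$, not $pK$. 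If you now integrate the $\varphi''$-term by parts once and combine, the resulting $\varphi'$-coefficient is $-pK'c+(p-1)Kc'$, and a direct computation using $c'=-\tilde\kappa_u s$, $c''=-\tilde\kappa_u' s-\tilde\kappa_u^2 c$, $K'=(p-1)|\tilde\kappa_u|^{p-2}\tilde\kappa_u'$ shows
\[
\bigl(pK'c-(p-1)Kc'\bigr)'=c\bigl[pK''+(p-1)|\tilde\kappa_u|^{p}\tilde\kappa_u\bigr],
\]
not $(p-1)|\tilde\kappa_u|^{p}\tilde\kappa_u$ itself. So your ``perfect derivative'' expectation is off by exactly the factor $c=1/\sigma$: what you actually obtain is the target equation \eqref{eq:0710-4} weighted by $1/\sigma$, which still requires an extra division-by-$c$ argument (and, for $1<p<2$, the step $K\in W^{1,1}$ that you invoke can fail if $\kappa_u$ vanishes, since $z\mapsto|z|^{p-2}z$ is not locally Lipschitz at $0$).

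The paper fixes both problems in one stroke by taking instead $\phi_1(x)=\sigma(x)\,\varphi(\mathbf{s}(x))$. The extra factor $\sigma$ makes the $\varphi''$-coefficient come out as $p|\kappa_u|^{p-2}\kappa_u\,\sigma$, so that after $dx=\mathrm{d}\mathbf{s}/\sigma$ one lands directly on $p|\tilde\kappa_u|^{p-2}\tilde\kappa_u\,\varphi''$ with no stray weight. Moreover, the only chain rule needed is $(|\kappa_u|^p)'=p|\kappa_u|^{p-2}\kappa_u\,\kappa_u'$, and $z\mapsto|z|^p$ is $C^1$ for every $p>1$, so no nonvanishing hypothesis on $\kappa_u$ is needed. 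In short: replace your $\phi$ by $\sigma\phi$ and the computation you sketched will go through cleanly.
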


\subsection{Explicit formulae of free $p$-elastica}
Let $u$ be a minimizer of $\mathcal{E}_p$ in $\Msym(\psi)$.
Since Assumption~\ref{ass:psi} implies $u > \psi$ near 
$x=0$,
we infer from Lemma~\ref{p-EL} that $u$ near $x=0$ can be characterized as
a $C^2$-planar curve whose curvature $\kappa\in C([0,L])$ vanishes at $x=0$ and satisfies 
\begin{align}\label{eq:p-free}
p\big(|\kappa|^{p-2}\kappa \big)'' + (p-1)|\kappa|^p\kappa =0
\end{align}
in the sense of distributions.

Keeping this fact in mind, 
let us consider $C^2$-curves $\gamma:[0,L] \to \R^2$ whose curvature $\kappa\in C([0,L])$ satisfies $\kappa(0)=0$ and 
\begin{align}\label{eq:planar-EL}
\int_0^L \bigl[ p |\kappa|^{p-2} \kappa \varphi'' + (p-1) |\kappa|^{p}\kappa \varphi \bigr]\, \ds =0
\quad \text{for all}\quad \varphi \in C^{\infty}_0(0,L),
\end{align}
where $s$ denotes the arc length parameter and $L$ denotes the length of $\gamma$.
Set 
\begin{equation}
\label{eq:w-kappa}
\omega(s) := |\kappa(s)|^{p-2} \kappa(s), 
\end{equation}
which is equivalent to $\kappa(s)=|\omega(s)|^{\frac{2-p}{p-1}} \omega(s)$. 

\begin{remark}\label{rem:6.3toberefereedtointheintro}
Euler's substitution $w_u$ (see Definition~\ref{def:w}) and $\omega$ introduced in \eqref{eq:w-kappa} are different from each other and play different roles.
In fact, with the choice of $G=EU_p$ in \eqref{gEs}, one notices that 
\[
w_u(x)=-p|\kappa_u(x)|^{p-2} \kappa_u(x)\big(1+u'(x)^2 \big)^{\frac{p-1}{2p}}.
\]
On the other hand, for ${\rm graph}(u):=\Set{(x, u(x)) | x \in [0, 1]}$ and its arc  length $\mathbf{s}(x)=\int_0^x\sqrt{1+u'(y)^2}\dy$, $\omega$ becomes
\[
\omega(\mathbf{s}(x))=|\kappa_u(x)|^{p-2}\kappa_u(x).
\]
Notice that in this sense $\omega$ and $w_u$ have in common that they involve the term $|u''|^{p-2}u''$ as the term of highest order.
Their roles are as follows:
Euler's substitution allows $w_u$ to satisfy a second order differential equation without zeroth-order term; 
the transformation $\omega$ changes the quasilinear Euler--Lagrange equation into a semilinear equation. 
\end{remark} 
When $\kappa \in C([0,L])$, $\omega$ also belongs to $C([0,L])$.
Inserting this into \eqref{eq:planar-EL}, 
we can reduce \eqref{eq:planar-EL} to
\begin{equation}
\label{eq:0606-2}
\int^L_0 \Bigl[ p \omega \varphi'' + (p-1) |\omega|^{\frac{2}{p-1}} \omega \varphi \Bigr] \, \ds =0 
\quad \text{for all} \quad \varphi \in C^{\infty}_0(0,L).
\end{equation}
Therefore, $\omega\in C([0,L])$ becomes a weak solution of semilinear equation, and then adopting ideas from \cite[Proof of Theorem 3.9]{DDG08} and \cite[Proof of Proposition 3.2]{Anna}, we obtain the following regularity result of $\omega$.
\begin{lemma}\label{lem:regularity-omega}
If $\omega\in C([0,L])$ satisfies \eqref{eq:0606-2}, then it follows that $\omega \in C^2([0,L])$ and \begin{equation}
\label{eq:w-equation}
p \omega''(s) +(p-1)|\omega(s)|^{\frac{2}{p-1}}\omega(s)=0, \quad s\in(0,L).
\end{equation}
\end{lemma}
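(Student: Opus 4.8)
The plan is to run a standard bootstrap regularity argument for the semilinear ODE, starting from the weak formulation \eqref{eq:0606-2}. First I would observe that \eqref{eq:0606-2} says precisely that $p\omega$ is a weak (distributional) solution of $(p\omega)'' = -(p-1)|\omega|^{\frac{2}{p-1}}\omega$ on $(0,L)$; in other words, $\omega'' = f$ in $\mathcal{D}'(0,L)$ with $f := -\tfrac{p-1}{p}|\omega|^{\frac{2}{p-1}}\omega$. Since $\omega \in C([0,L])$ by hypothesis, the right-hand side $f$ is continuous on $[0,L]$ (the map $t \mapsto |t|^{\frac{2}{p-1}}t$ is continuous, indeed $C^1$ away from $0$ and Hölder near $0$, but continuity is all we need at this stage). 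Therefore $\omega'' \in C([0,L])$ in the distributional sense, which by the classical one-dimensional regularity lemma (a distribution whose second derivative is a continuous function is $C^2$, obtained by integrating twice) gives $\omega \in C^2([0,L])$ and the pointwise identity \eqref{eq:w-equation} on $(0,L)$.

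More concretely, the key steps in order are: (1) rewrite \eqref{eq:0606-2} as $\int_0^L \omega\,\varphi'' \,\ds = -\tfrac{p-1}{p}\int_0^L |\omega|^{\frac{2}{p-1}}\omega\,\varphi\,\ds$ for all $\varphi \in C^\infty_0(0,L)$, i.e. $\omega'' = f$ distributionally with $f \in C([0,L])$; (2) fix $s_0 \in (0,L)$ and define $g(s) := \int_{s_0}^s \int_{s_0}^{\sigma} f(\tau)\,\mathrm{d}\tau\,\mathrm{d}\sigma$, which is $C^2$ with $g'' = f$; (3) note $(\omega - g)'' = 0$ in $\mathcal{D}'(0,L)$, hence $\omega - g$ is an affine function (a distribution with vanishing second derivative on an interval is a polynomial of degree $\le 1$), so $\omega = g + (\text{affine})$, and thus $\omega \in C^2$ on $(0,L)$; (4) since $f = -\tfrac{p-1}{p}|\omega|^{\frac{2}{p-1}}\omega$ is in fact continuous up to the closed interval $[0,L]$ (because $\omega \in C([0,L])$), the antiderivatives $g$ extend $C^2$ to $[0,L]$ as well, giving $\omega \in C^2([0,L])$; (5) differentiate the identity $\omega = g + (\text{affine})$ twice to read off $p\omega''(s) + (p-1)|\omega(s)|^{\frac{2}{p-1}}\omega(s) = 0$ for $s \in (0,L)$.

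I do not expect a serious obstacle here; this is the soft part of the analysis, and the paper explicitly flags that the argument follows \cite[Proof of Theorem 3.9]{DDG08} and \cite[Proof of Proposition 3.2]{Anna}. The only mild subtlety worth a sentence is that one cannot bootstrap all the way to $C^\infty$: the nonlinearity $t \mapsto |t|^{\frac{2}{p-1}}t$ is smooth on $\{t \neq 0\}$ but only Hölder (not Lipschitz, when $p > 3$) at $t = 0$, so at points where $\omega$ vanishes higher regularity may genuinely fail — which is consistent with the flat-core phenomenon discussed earlier and with the optimal-regularity statements in Theorem~\ref{Theorem:1}. Hence $C^2$ is the natural stopping point, and the lemma claims exactly that. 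If one wanted, one could further note $\omega \in C^\infty(\{\omega \neq 0\})$ by the standard bootstrap, but this is not needed for the stated conclusion.
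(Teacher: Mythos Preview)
Your argument is correct and in fact more streamlined than the paper's own proof. You go directly from the distributional identity $\omega'' = f$ with $f \in C([0,L])$ to $\omega \in C^2([0,L])$ via the classical fact that a distribution with vanishing second derivative on an interval is affine, plus explicit double integration of $f$. The paper instead proceeds in two stages: it first constructs, for each $\eta \in C_0^\infty(0,L)$, a tailored test function $\varphi_1 \in W^{2,2}_0(0,L)$ with $\varphi_1'' = \eta' + (\text{bounded correction})$ to obtain the bound $\bigl|\int_0^L \omega\,\eta'\,\ds\bigr| \le C\|\eta\|_{L^1}$, hence $\omega \in W^{1,\infty}$; then it integrates by parts and bounds $\bigl|\int_0^L \omega'\varphi'\,\ds\bigr| \le C\|\varphi\|_{L^1}$ to get $\omega \in W^{2,\infty}$, and finally uses continuity of the right-hand side to upgrade to $C^2$. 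Both approaches are standard elliptic bootstrap in one dimension; yours invokes a single distribution-theory lemma, while the paper's is more self-contained and hands-on, avoiding any appeal to the structure theorem for distributions. Your closing remark about the failure of higher regularity at zeros of $\omega$ is apt and not present in the paper's proof.
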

The proof of Lemma~\ref{lem:regularity-omega} is similar to that of \cite[Proof of Proposition 3.2]{Anna}, and given in the Appendix.

We consider the Cauchy problem for equation \eqref{eq:w-equation}: 
\begin{align}
\label{eq:0606-3}
\begin{cases}
p \omega''(s) +(p-1)|\omega(s)|^{\frac{2}{p-1}}\omega(s)=0, \\
\omega(0)=0, \\
\omega'(0) = \lambda, 
\end{cases}
\end{align}
where $\lambda \in \mathbb{R}$. 
For each $\lambda \in \mathbb{R}$, problem \eqref{eq:0606-3} possesses a unique solution in a neighborhood of $s=0$
since $F(\xi)=|\xi|^{\frac{2}{p-1}}\xi$ is locally Lipschitz on $\R$. 
In order to find an explicit form of the solution to \eqref{eq:0606-3}, we introduce a generalized trigonometric function.
For each $q,r \in (0, \infty)$, we define $\sin_{q,r} x$ via its 
inverse function 
\begin{equation}
\label{eq:sin_pq}
\sin_{q, r}^{-1}x := \int_0^x \frac{1}{(1-t^r)^{\frac{1}{q}}} \dt, \quad 0 \leq x \leq 1,
\end{equation}
and $\pi_{q,r}$ by 
\[
\pi_{q, r}:= 2\sin_{q,r}^{-1}1 = 2\int_0^1 \frac{1}{(1-t^r)^{\frac{1}{q}}}\dt = \frac{2}{r}\, \mathrm{B}\bigl( \tfrac{1}{q'}, \tfrac{1}{r} \bigr),
\]
where $q':= q / (q-1)$ and $\mathrm{B}$ denotes the beta function, i.e.,
\[
\mathrm{B}(x,y) = \mathrm{B}(y,x) = \int_0^{\infty}\frac{t^{x-1}}{(1+t)^{x+y}}\dt 
= \int_0^{1}t^{x-1} (1-t)^{y-1} \dt, 
\quad x, y>0.
\]
First the function $\sin_{q,r} x$ is defined on $[0, \pi_{q,r}/2]$, and then it is extended symmetrically to 
the interval $[0, \pi_{q,r}]$, more precisely $\sin_{q,r} x : = \sin_{q,r} (\pi_{q,r} - x)$ for $x \in [ \pi_{q,r}/2 , \pi_{q,r}]$. Further it can be extended as a $C^1$-function on $\R$ first extending it to $[-\pi_{q,r}, \pi_{q,r}]$ as an odd function and then to all of $\R$ as a $2 \pi_{q,r}$-periodic function. Notice that $\sin_{q,r} x$ is strictly increasing on $[0, \pi_{q,r}/2]$.
The function  $\cos_{q,r}:\mathbb{R} \to \mathbb{R}$ is defined by  
\begin{equation}
\label{cos_pq}
 \cos_{q,r}x:=\frac{d}{dx} \sin_{q,r} x, \quad x \in \mathbb{R}. 
\end{equation}
It is known that
\begin{equation}
\label{eq:0711-4}
|\cos_{q,r} x|^q + |\sin_{q,r} x|^r =1, \quad x \in \mathbb{R},
\end{equation}
and 
$y=\sin_{q,r} x$ satisfies 
\begin{equation} 
\label{eq:0606-4}
\bigl( |y'|^{q-2} y' \bigr)' + \frac{r}{q'}|y|^{r-2}y=0,  
\end{equation}
see e.g. \cite[(2.7)]{EGL} for \eqref{eq:0711-4} and \cite[p.1510]{KT} for \eqref{eq:0606-4}.

\begin{lemma} \label{lem:0606-1}
For each $\lambda \in \mathbb{R}$, the solution of \eqref{eq:0606-3} is given by 
\[
 \omega(s) = (p')^{\frac{1}{p'}} \lambda|\lambda|^{-\frac{1}{p}} \sin_{2,2p'} \bigl( (p')^{-\frac{1}{p'}}|\lambda|^{\frac{1}{p}} s \bigr),  
\]
where $p'=p/(p-1)$. 
\end{lemma}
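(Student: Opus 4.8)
The plan is to verify directly that the proposed $\omega$ solves the Cauchy problem \eqref{eq:0606-3}, and then invoke the uniqueness statement already noted after \eqref{eq:0606-3} (local Lipschitz continuity of $F(\xi) = |\xi|^{\frac{2}{p-1}}\xi$). So the whole proof reduces to three checks: the initial value $\omega(0) = 0$, the initial slope $\omega'(0) = \lambda$, and the ODE $p\omega'' + (p-1)|\omega|^{\frac{2}{p-1}}\omega = 0$. For the first, since $\sin_{2,2p'}(0) = 0$ we get $\omega(0) = 0$ immediately (and one should note the formula makes sense even at $\lambda = 0$, where it gives $\omega \equiv 0$, the correct solution).

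For the remaining two checks the key is to differentiate $\omega(s) = (p')^{1/p'}\lambda|\lambda|^{-1/p}\sin_{2,2p'}\bigl((p')^{-1/p'}|\lambda|^{1/p}s\bigr)$ and feed the result into \eqref{eq:0606-4} with the specific parameters $q = 2$, $r = 2p'$. Here $q' = 2$ as well, so $r/q' = p'$, and \eqref{eq:0606-4} reads $\bigl(|y'|^0 y'\bigr)' + p'|y|^{2p'-2}y = 0$, i.e. $y'' + p'|y|^{2p'-2}y = 0$ for $y = \sin_{2,2p'}$. Writing $\mu := (p')^{-1/p'}|\lambda|^{1/p}$ and $A := (p')^{1/p'}\lambda|\lambda|^{-1/p}$ so that $\omega(s) = A\,\sin_{2,2p'}(\mu s)$, I would compute $\omega'(s) = A\mu\cos_{2,2p'}(\mu s)$, hence $\omega'(0) = A\mu = (p')^{1/p'}\lambda|\lambda|^{-1/p}\cdot(p')^{-1/p'}|\lambda|^{1/p} = \lambda$, which settles the initial slope. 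Then $\omega''(s) = A\mu^2\,(\sin_{2,2p'})''(\mu s) = -A\mu^2 p'|\sin_{2,2p'}(\mu s)|^{2p'-2}\sin_{2,2p'}(\mu s)$.

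It remains to reconcile this with $-\frac{p-1}{p}|\omega|^{\frac{2}{p-1}}\omega = -\frac{1}{p'}|A|^{\frac{2}{p-1}}|A|\,|\sin_{2,2p'}(\mu s)|^{\frac{2}{p-1}}\sin_{2,2p'}(\mu s)\,A/|A|$. Since $2p' - 2 = \frac{2}{p-1}$, the powers of $\sin_{2,2p'}$ match, and the whole identity $\omega'' = -\frac{p-1}{p}|\omega|^{\frac{2}{p-1}}\omega$ boils down to the scalar identity $A\mu^2 p' = \frac{1}{p'}|A|^{\frac{2}{p-1}}A$, i.e. $\mu^2 (p')^2 = |A|^{\frac{2}{p-1}}$. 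This is the one genuine (if elementary) computation: with $\mu^2 = (p')^{-2/p'}|\lambda|^{2/p}$ one gets $\mu^2(p')^2 = (p')^{2 - 2/p'}|\lambda|^{2/p} = (p')^{2/p}|\lambda|^{2/p}$ (using $2 - 2/p' = 2/p$), while $|A| = (p')^{1/p'}|\lambda|^{1-1/p} = (p')^{1/p'}|\lambda|^{1/p'}$, so $|A|^{\frac{2}{p-1}} = |A|^{2(p'-1)} = (p')^{\frac{2(p'-1)}{p'}}|\lambda|^{\frac{2(p'-1)}{p'}}$, and $\frac{p'-1}{p'} = \frac{1}{p}$ gives $(p')^{2/p}|\lambda|^{2/p}$ — the two sides agree.

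The main (modest) obstacle is bookkeeping with the exponents: one must keep straight the three conjugate-exponent relations $p' = \frac{p}{p-1}$, $\frac{1}{p} + \frac{1}{p'} = 1$, and $2p' - 2 = \frac{2}{p-1} = 2(p'-1)$, and be careful with signs/absolute values so that the formula is valid for $\lambda < 0$ as well (it is, because every $\lambda$ appears as $\lambda|\lambda|^{-1/p}$, an odd function of $\lambda$, consistent with the oddness of the ODE). Once the scalar identity above is checked, $\omega$ satisfies all of \eqref{eq:0606-3}, and uniqueness of the Cauchy problem — valid on a neighborhood of $s = 0$, which is all that is claimed — finishes the proof. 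Note also that the solution is in fact global since $\sin_{2,2p'}$ is defined on all of $\mathbb{R}$; but for the statement as given, the local uniqueness suffices.
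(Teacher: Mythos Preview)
Your proof is correct and follows essentially the same approach as the paper: both use the ansatz $\omega(s) = A\sin_{2,2p'}(\alpha s)$, the known ODE \eqref{eq:0606-4} for $\sin_{2,2p'}$ with $q=2$, $r=2p'$, and then match the constants to the initial data; the paper derives $A$ and $\alpha$ from the conditions whereas you verify the given values, but the computations are identical. Your additional remarks on the $\lambda=0$ case and the sign bookkeeping for $\lambda<0$ are accurate and welcome.
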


\begin{proof}
Let $y(x):= \sin_{2,r}{x}$. 
Set $\omega(x):= A y(\alpha x)$ and choose $(A, \alpha, r) \in \mathbb{R} \times \mathbb{R}_{>0} \times \mathbb{R}_{>0}$ suitably later. 
It follows from the definition of $\sin_{q,r}$ that $y(0)=0$, and then $\omega(0)=0$.
Since now $y(x)$ satisfies 
\[
y''(x) + \frac{r}{2}|y(x)|^{r-2}y(x)=0,
\]
we see that 
\[
\omega''(x)= A \alpha^2 y''(\alpha x) = - \dfrac{A \alpha^2 r}{2} |y(\alpha x)|^{r-2} y(\alpha x) = - \dfrac{r}{2} |A|^{2-r} \alpha^2 |\omega(x)|^{r-2} \omega(x). 
\]
Thus, setting 
\[
\dfrac{r}{2} |A|^{2-r} \alpha^2= \dfrac{p-1}{p}, \quad r-2 = \dfrac{2}{p-1}, 
\]
i.e., 
\[
r= 2 p', \quad \alpha = \dfrac{1}{p'}|A|^{\frac{1}{p-1}}, 
\]
we see that 
\[
\omega(x) = A y((p')^{-1}|A|^{\frac{1}{p-1}} x)= A \sin_{2,2p'}{\bigl( (p')^{-1}|A|^{\frac{1}{p-1}} x \bigr)} 
\]
satisfies the differential equation in \eqref{eq:0606-3}. 
Moreover, since 
\[
(\sin_{2,q} x)' = \cos_{2,q}x \quad \text{and} \quad \cos_{2,q} 0 =1, 
\]
one obtains that 
$\omega'(0)= (p')^{-1} A|A|^{\frac{1}{p-1}}$.
Thus, setting 
\[
A= (p')^{\frac{1}{p'}} \lambda|\lambda|^{-\frac{1}{p}} \quad \text{and} \quad \alpha=(p')^{-1} |A|^{\frac{1}{p-1}}= (p')^{-\frac{1}{p'}}|\lambda|^{\frac{1}{p}}, 
\]
we see that $\omega$ satisfies $\omega'(0)=\lambda$.  
Therefore Lemma \ref{lem:0606-1} follows. 
\end{proof}

For $\lambda \ge 0$, we define 
\begin{equation} 
\label{eq:0711-3}
\omega_{\lambda}(s) :=- (\lambda p')^{\frac{1}{p'}} \sin_{2,2p'} \bigl((p')^{-\frac{1}{p'}}\lambda^{\frac{1}{p}} s \bigr). 
\end{equation}
It follows from Lemma \ref{lem:0606-1} that $\omega_\lambda$ is the unique solution of \eqref{eq:w-equation} with $\omega(0)=0$ and $\omega'(0)=-\lambda$.
Set 
\begin{equation} 
\label{eq:0710-6}
k_{\lambda}(s):=|\omega_{\lambda}(s)|^{\frac{2-p}{p-1}} \omega_{\lambda}(s).   
\end{equation}
For each $\lambda>0$, let us introduce the curve $(X_{\lambda}, Y_{\lambda})$ (that is unique modulo Euclidean transformations) parameterized by 
arc length 
and 
such that its curvature is $k_{\lambda}$.

\begin{lemma} \label{lemma:pre-G}
Let $\lambda>0$ and set for $s \in [0,2L_{\lambda}]$
\begin{equation} 
\label{eq:0711-2}
X_{\lambda}(s) = \frac{1}{\lambda p'} \int_0^s |k_{\lambda}(t)|^{p} \dt,
\quad 
Y_{\lambda}(s) =-\frac{p-1}{\lambda}\int_0^s |k_{\lambda}(t)|^{p-2}k_{\lambda}'(t)\dt, 
\end{equation}
where $p'=p/(p-1)$. Then 
\begin{align}
& X_\lambda(0)=Y_\lambda(0)=0, \quad X'_\lambda(0)=0, \quad Y'_\lambda(0)=1, \label{eq:G-tangent} \\
& X'_\lambda(s)^2 + Y'_\lambda(s)^2 =1 \quad \text{for all} \quad s \ge 0, \label{eq:arc-length-para} \\
& X'_\lambda(s) Y''_\lambda(s) - X''_\lambda(s) Y'_\lambda(s) = k_\lambda(s) \quad \text{for all} \quad s \ge 0. \label{eq:G-curvature=k_l} 
\end{align}
\end{lemma}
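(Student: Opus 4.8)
The plan is to reduce all three displayed groups of identities to the elementary Frenet relations
\[
X_\lambda'' = -k_\lambda\,Y_\lambda', \qquad Y_\lambda'' = k_\lambda\,X_\lambda',
\]
which I will verify by differentiating the explicit formulae \eqref{eq:0711-2} and invoking the equation \eqref{eq:w-equation}. Throughout, write $\omega := \omega_\lambda$ and $k := k_\lambda$, and recall from \eqref{eq:0710-6} that $k = |\omega|^{\frac{2-p}{p-1}}\omega$, equivalently $\omega = |k|^{p-2}k$. Since $p' - 2 = \frac{2-p}{p-1}$ and $\frac{1}{p-1}>0$, this gives the identities $|k|^p = |\omega|^{p'}$, $|\omega|^{p'-2}\omega = k$, and $|\omega|^{\frac{2}{p-1}}\omega = |k|^pk$; in particular $k$ is continuous with $k(0)=0$ (it is $\omega$ composed with a continuous function), and \eqref{eq:w-equation} reads $p\omega'' + (p-1)|k|^pk = 0$ on $(0,2L_\lambda)$. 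By Lemma~\ref{lem:regularity-omega} (or directly from Lemma~\ref{lem:0606-1}) one has $\omega \in C^2$, and by \eqref{eq:0606-3}/\eqref{eq:0711-3} we have $\omega(0)=0$, $\omega'(0)=-\lambda$.

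The first step is to simplify $Y_\lambda$. Since $|k|^{p-2}k' = \tfrac{1}{p-1}\bigl(|k|^{p-2}k\bigr)' = \tfrac{1}{p-1}\omega'$, the second formula in \eqref{eq:0711-2} together with $\omega(0)=0$ yields
\[
Y_\lambda(s) = -\frac{1}{\lambda}\int_0^s \omega'(t)\dt = -\frac{\omega(s)}{\lambda},
\]
so $Y_\lambda \in C^2$ with $Y_\lambda' = -\omega'/\lambda$ and $Y_\lambda'' = -\omega''/\lambda$. For $X_\lambda$, note $X_\lambda' = \tfrac{1}{\lambda p'}|k|^p = \tfrac{1}{\lambda p'}|\omega|^{p'}$; since $p'>1$ and $\omega \in C^1$, the function $|\omega|^{p'}$ is $C^1$ with $\bigl(|\omega|^{p'}\bigr)' = p'|\omega|^{p'-2}\omega\,\omega' = p'k\omega'$ (using $|\omega|^{p'-2}\omega = k$), whence $X_\lambda \in C^2$ with $X_\lambda'' = k\omega'/\lambda$. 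The initial conditions \eqref{eq:G-tangent} are now immediate: $X_\lambda(0)=Y_\lambda(0)=0$ by definition, $X_\lambda'(0) = \tfrac{1}{\lambda p'}|k(0)|^p = 0$, and $Y_\lambda'(0) = -\omega'(0)/\lambda = 1$.

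It remains to verify the Frenet relations and conclude. On the one hand $X_\lambda'' = \tfrac{k\omega'}{\lambda} = -k\bigl(-\tfrac{\omega'}{\lambda}\bigr) = -kY_\lambda'$, and on the other hand, using $p\omega'' = -(p-1)|k|^pk$ and $\tfrac{p-1}{p}=\tfrac1{p'}$,
\[
Y_\lambda'' = -\frac{\omega''}{\lambda} = \frac{p-1}{p\lambda}\,|k|^pk = \frac{1}{\lambda p'}\,|k|^pk = k\cdot\frac{1}{\lambda p'}|k|^p = kX_\lambda'.
\]
Setting $E := (X_\lambda')^2 + (Y_\lambda')^2$, these give $E' = 2X_\lambda'X_\lambda'' + 2Y_\lambda'Y_\lambda'' = -2kX_\lambda'Y_\lambda' + 2kX_\lambda'Y_\lambda' = 0$, so $E \equiv E(0) = 1$ by \eqref{eq:G-tangent}, which is \eqref{eq:arc-length-para}; and then $X_\lambda'Y_\lambda'' - X_\lambda''Y_\lambda' = k(X_\lambda')^2 + k(Y_\lambda')^2 = kE = k_\lambda$, which is \eqref{eq:G-curvature=k_l}. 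No step is a genuine obstacle; the only point requiring care is to route the computation through the $C^1$/$C^2$ quantities $\omega$, $|k|^{p-2}k=\omega$ and $|k|^p=|\omega|^{p'}$ rather than through $k_\lambda'$ itself, which for $p>2$ need not stay bounded near $s=0$.
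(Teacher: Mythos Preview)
Your proof is correct. The core reductions---writing $Y_\lambda=-\omega_\lambda/\lambda$, $X_\lambda'=\tfrac{1}{\lambda p'}|\omega_\lambda|^{p'}$, and invoking the ODE \eqref{eq:w-equation} to compute $Y_\lambda''$---coincide exactly with what the paper does (its equations \eqref{eq:X'Y'} and the displayed formula for $Y_\lambda''$). Where you diverge is in the proof of the arc-length identity \eqref{eq:arc-length-para}: the paper plugs in the explicit representation $\omega_\lambda(s)=-(\lambda p')^{1/p'}\sin_{2,2p'}\bigl((p')^{-1/p'}\lambda^{1/p}s\bigr)$ and its derivative, and appeals to the generalized Pythagorean identity $|\cos_{2,2p'}|^2+|\sin_{2,2p'}|^{2p'}=1$; you instead verify the Frenet relations $X_\lambda''=-k_\lambda Y_\lambda'$, $Y_\lambda''=k_\lambda X_\lambda'$ directly from the ODE and conclude that $(X_\lambda')^2+(Y_\lambda')^2$ is constant. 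Your route is slightly more self-contained (it does not need the explicit $\sin_{q,r}$ formula or \eqref{eq:0711-4}) and packages \eqref{eq:arc-length-para} and \eqref{eq:G-curvature=k_l} together; the paper's route, on the other hand, makes the Pythagorean identity for $\sin_{2,2p'}$ transparent as the mechanism behind the unit-speed property.
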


\begin{proof}
First we prove \eqref{eq:G-tangent}. 
It clearly follows from \eqref{eq:0711-2} that $X_\lambda(0)=Y_\lambda(0)=0$. 
By \eqref{eq:0710-6} we see that $k_{\lambda}(s)$ is differentiable and 
\begin{align} \label{eq:0712-1}
k_{\lambda}'(s)&= \frac{1}{p-1} |\omega_{\lambda}(s)|^{\frac{2-p}{p-1}} \omega_{\lambda}'(s), \quad s > 0.
\end{align}
It follows from \eqref{cos_pq} and \eqref{eq:0711-3} that
\begin{align} \label{eq:0711-1}
\omega_{\lambda}'(s) = -\lambda \cos_{2,2p'}{\bigl((p')^{-\frac{1}{p'}}\lambda^{\frac{1}{p}} s \bigr)}.
\end{align}
Since $|k_{\lambda}(s)|=|\omega_{\lambda}(s)|^{\frac{1}{p-1}}$, we deduce from \eqref{eq:0711-2} that 
\begin{equation}
\label{eq:X'Y'}
X'_{\lambda}(s) 
= \frac{1}{\lambda p'} |\omega_{\lambda}(s)|^{p'}, 
\quad
Y'_{\lambda}(s) = - \frac{1}{\lambda} \omega_{\lambda}'(s). 
\end{equation}
Recalling that $\sin_{q,r}(0)=0$ and $\cos_{q,r}(0)=1$, we deduce from \eqref{eq:0711-3}, \eqref{eq:0711-1} and \eqref{eq:X'Y'} that $X'_\lambda(0)=0$ and $Y'_\lambda(0)=1$. 
Thus \eqref{eq:G-tangent} follows. 

We turn to \eqref{eq:arc-length-para}.
Combining \eqref{eq:X'Y'} with \eqref{eq:0711-3} and \eqref{eq:0711-1}, we see that 
\begin{align*}
X'_{\lambda}(s)^2 + Y'_{\lambda}(s)^2
&= \frac{1}{(\lambda p')^2} \Bigl( (p')^{\frac{1}{p'}} \lambda^{1-\frac{1}{p}} \sin_{2,2p'} \bigl((p')^{-\frac{1}{p'}}\lambda^{\frac{1}{p}} s \bigr) \Bigr)^{2p'} \\
&\qquad+ \frac{1}{\lambda^2} \Bigl( \lambda \cos_{2,2p'}{\bigl((p')^{-\frac{1}{p'}}\lambda^{\frac{1}{p}} s \bigr)} \Bigr)^2 \\
&=\sin_{2,2p'}^{2p'}{\bigl((p')^{-\frac{1}{p'}}\lambda^{\frac{1}{p}} s \bigr)} 
+\cos_{2,2p'}^2{\bigl((p')^{-\frac{1}{p'}}\lambda^{\frac{1}{p}} s \bigr)}
=1,
\end{align*}
where we used \eqref{eq:0711-4} in the last equality.
Thus \eqref{eq:arc-length-para} follows. 

Finally we prove \eqref{eq:G-curvature=k_l}. 
Since $w_{\lambda}$ is a solution of \eqref{eq:w-equation}, we have 
\[
Y''_{\lambda}(s)= -\frac{1}{\lambda}\omega''_{\lambda}(s)
= \frac{1}{\lambda p'} |\omega_{\lambda}(s)|^{\frac{2}{p-1}}\omega_{\lambda}(s).
\]
Combining this with
\[
X''_{\lambda}(s)= \frac{1}{\lambda}|\omega_{\lambda}(s)|^{\frac{2-p}{p-1}}\omega_{\lambda}(s)\omega'_{\lambda}(s),
\]
we observe from \eqref{eq:0711-4}, \eqref{eq:0711-3}, \eqref{eq:0710-6} and \eqref{eq:0711-1} that   
\begin{align*}
& X'_{\lambda}(s)Y''_{\lambda}(s) - X''_{\lambda}(s)Y'_{\lambda}(s) \\
& \qquad =\frac{1}{(\lambda p')^2} |\omega_{\lambda}(s)|^{\frac{p+2}{p-1}}\omega_{\lambda}(s) + \frac{1}{\lambda^2} |\omega_{\lambda}(s)|^{\frac{2-p}{p-1}}\omega_{\lambda}(s) \omega'_{\lambda}(s)^2 \\
& \qquad =|\omega_{\lambda}(s)|^{\frac{2-p}{p-1}}\omega_{\lambda}(s) \Bigl[ \frac{1}{(\lambda p')^2}|\omega_{\lambda}(s)|^{2p'} + \frac{1}{\lambda^2} \omega'_{\lambda}(s)^2 \Bigr] \\
& \qquad =|\omega_{\lambda}(s)|^{\frac{2-p}{p-1}}\omega_{\lambda}(s)
= k_\lambda(s). 
\end{align*}
Therefore Lemma \ref{lemma:pre-G} follows. 
\end{proof}
\begin{definition}
For each $\lambda>0$, we define $\Gamma_\lambda : [0, 2 L_\lambda] \to \mathbb{R}^2$ by 
\begin{equation}
\label{eq:def-Gm}
\Gamma_\lambda(s):=(X_\lambda(s), Y_\lambda(s)),
\quad s\in [0, L_{\lambda}]
\end{equation}
where $X_\lambda$ and $Y_\lambda$ are defined by \eqref{eq:0711-2} and  
\[
L_{\lambda}:=\frac{1}{2} (p')^{\frac{1}{p'}} \lambda^{-\frac{1}{p} } \pi_{2, 2p'}.
\]
\end{definition}
\begin{remark}
The planar curve $\Gamma_\lambda : [0, 2 L_\lambda] \to \mathbb{R}^2$ is symmetric with respect to $s=L_{\lambda}$.
More precisely, it follows that 
\begin{align} \label{eq:0718-4}
X_{\lambda}(2L_{\lambda} -s) = -X_{\lambda}(s) + X_{\lambda} (2L_{\lambda}), 
\;
Y_{\lambda}(2L_{\lambda} -s) = Y_{\lambda}(s), \; s\in[0,L_{\lambda}].
\end{align}
Although it is straightforward to obtain \eqref{eq:0718-4}, we give a rigorous derivation in Appendix~\ref{appendix6}.
\end{remark} 
On properties of $\Gamma_\lambda$, we have: 
\begin{lemma} \label{scaling}
Let $\lambda>0$. 
For $\Gamma_{\lambda}:[0,2L_{\lambda}] \to \mathbb{R}^2$, the following hold$\colon$  
\begin{enumerate}
\item[{\rm (i)}]  $\Gamma_{\lambda}|_{(0, 2L_{\lambda})}$ is the graph of a function$;$ 
\item[{\rm (ii)}] $\Gamma_{\lambda}'(0)=(0,1)$, $\Gamma'_\lambda(L_\lambda)=(1,0)$, $\Gamma_{\lambda}'(2L_{\lambda})=(0,-1);$ 
\item[{\rm (iii)}] $\Gamma_{\lambda}(s)=\lambda^{-\frac{1}{p}}\Gamma_1(\lambda^{\frac{1}{p}} s)$ for all $\lambda > 0$ and $s \in [0, 2 L_\lambda];$ 
\item[{\rm (iv)}] The curvature of $\Gamma_\lambda(s)$ is given by $k_\lambda(s)$. Moreover, the curvature $k_\lambda(s)$ satisfies 
$k_{\lambda}(0)=0$, $k_{\lambda}(s)<0$ for all $s \in (0, 2L_\lambda)$, and $k_{\lambda}'(s)<0$ for all $s \in (0, L_{\lambda})$;
\item[{\rm (v)}] Let $\theta_{\lambda}:[0,2L_{\lambda}]\to \R$ be the tangential angle of $\Gamma_{\lambda}$, i.e., 
\[
\Gamma'_{\lambda}(s)= 
\begin{pmatrix}
\cos \theta_{\lambda}(s) \\
\sin \theta_{\lambda}(s)
\end{pmatrix}.
\]
Then $\theta_{\lambda}$ is strictly decreasing from $\pi/2$ to $-\pi/2$.
\end{enumerate}
\end{lemma}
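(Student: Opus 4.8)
The plan is to read off all five statements from the explicit formula \eqref{eq:0711-3} for $\omega_\lambda$, the identity \eqref{eq:0711-4} together with the definition \eqref{cos_pq} of the generalized trigonometric functions, and the structural identities \eqref{eq:G-tangent}--\eqref{eq:G-curvature=k_l} established in Lemma~\ref{lemma:pre-G}. The single observation that drives most of the argument is that $\sin_{2,2p'}$ is strictly positive on $(0,\pi_{2,2p'})$ and vanishes precisely at the two endpoints; since for $s\in(0,2L_\lambda)$ the argument $(p')^{-1/p'}\lambda^{1/p}s$ runs through $(0,\pi_{2,2p'})$, the formulas \eqref{eq:0711-3} and \eqref{eq:0710-6} give $\omega_\lambda(s)<0$ and $k_\lambda(s)<0$ for all $s\in(0,2L_\lambda)$, while $\omega_\lambda$ and $k_\lambda$ vanish at $s=0$ and $s=2L_\lambda$. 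I would record this at the outset, as it feeds directly into (i), (iv) and (v).

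For (i) I would use the first identity in \eqref{eq:X'Y'}, namely $X_\lambda'(s)=\tfrac{1}{\lambda p'}|\omega_\lambda(s)|^{p'}$, which by the above is strictly positive on $(0,2L_\lambda)$; hence $X_\lambda$ is strictly increasing and $C^1$ there and $\Gamma_\lambda|_{(0,2L_\lambda)}$ is the graph of $Y_\lambda\circ X_\lambda^{-1}$. For (ii), the tangent at $s=0$ is exactly \eqref{eq:G-tangent}; the value at $s=L_\lambda$ follows by substituting $(p')^{-1/p'}\lambda^{1/p}L_\lambda=\pi_{2,2p'}/2$ into \eqref{eq:X'Y'} and using $\sin_{2,2p'}(\pi_{2,2p'}/2)=1$, $\cos_{2,2p'}(\pi_{2,2p'}/2)=0$; and the value at $s=2L_\lambda$ is most cleanly obtained by differentiating the symmetry relations \eqref{eq:0718-4} at $s=0$ (alternatively, one substitutes $\cos_{2,2p'}(\pi_{2,2p'})=-1$, which comes from the symmetric extension of $\sin_{2,2p'}$). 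For (iv), the curvature claim is precisely \eqref{eq:arc-length-para}--\eqref{eq:G-curvature=k_l}, the sign of $k_\lambda$ is the preliminary observation, and $k_\lambda'(s)<0$ on $(0,L_\lambda)$ follows from \eqref{eq:0712-1} together with $\omega_\lambda'(s)=-\lambda\cos_{2,2p'}((p')^{-1/p'}\lambda^{1/p}s)<0$ there, since the argument then lies in $(0,\pi_{2,2p'}/2)$, where $\cos_{2,2p'}>0$. For (v), the arc-length property \eqref{eq:arc-length-para} lets us write $\Gamma_\lambda'=(\cos\theta_\lambda,\sin\theta_\lambda)$; differentiating and using \eqref{eq:G-curvature=k_l} gives $\theta_\lambda'=X_\lambda'Y_\lambda''-X_\lambda''Y_\lambda'=k_\lambda<0$ on $(0,2L_\lambda)$, so $\theta_\lambda$ is strictly decreasing, and reading off $\theta_\lambda(0)=\pi/2$ and $\theta_\lambda(2L_\lambda)=-\pi/2$ from (ii) finishes the argument.

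The step I expect to cost the most work is (iii), which is essentially exponent bookkeeping. From \eqref{eq:0711-3} one first notes the homogeneity $\omega_\lambda(s)=\lambda^{1/p'}\omega_1(\lambda^{1/p}s)$; inserting this into \eqref{eq:0710-6} and simplifying $\tfrac{1}{p'}\cdot\tfrac{1}{p-1}=\tfrac1p$ gives $k_\lambda(s)=\lambda^{1/p}k_1(\lambda^{1/p}s)$; substituting this into \eqref{eq:0711-2} and changing variables $\tau=\lambda^{1/p}t$ then yields $X_\lambda(s)=\lambda^{-1/p}X_1(\lambda^{1/p}s)$ and likewise $Y_\lambda(s)=\lambda^{-1/p}Y_1(\lambda^{1/p}s)$, which is the claim. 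Apart from this computation and the justification of the endpoint values of the generalized trigonometric functions used in (ii), every item is a direct consequence of Lemma~\ref{lemma:pre-G} and the sign observation made at the start.
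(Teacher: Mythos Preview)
Your proposal is correct and follows essentially the same approach as the paper's proof: both derive (i), (ii), (iv), (v) directly from the explicit formulas \eqref{eq:0711-3}, \eqref{eq:X'Y'} and the sign behaviour of $\sin_{2,2p'}$, $\cos_{2,2p'}$ on $[0,\pi_{2,2p'}]$, together with Lemma~\ref{lemma:pre-G}. The only cosmetic difference is in (iii): the paper computes $X_\lambda$ and $Y_\lambda$ directly via the explicit $\sin_{2,2p'}$-formulas \eqref{eq:0712-2}--\eqref{eq:0712-3}, whereas you route the scaling through the homogeneity $\omega_\lambda(s)=\lambda^{1/p'}\omega_1(\lambda^{1/p}s)$ and $k_\lambda(s)=\lambda^{1/p}k_1(\lambda^{1/p}s)$ before substituting into \eqref{eq:0711-2}; both computations are equivalent.
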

\begin{proof}
The assertion (i) follows from the fact that $X_{\lambda}'(s) >0$ in $(0,2L_{\lambda})$ by definition of $\sin_{2,2p'}$.  
We also deduce from  
\begin{align*}
\sin_{2,2p'}(0)& =0, \quad \sin_{2,2p'}(\pi_{2, 2p'}/2)=1, \quad \sin_{2,2p'}(\pi_{2, 2p'})=0, \\
\cos_{2,2p'}(0)& =1, \quad \cos_{2,2p'}(\pi_{2, 2p'}/2)=0, \quad \cos_{2,2p'}(\pi_{2, 2p'})=-1, 
\end{align*}
that (ii) holds true. 

We turn to (iii). 
It follows from the definition of $X_\lambda$ and $w_\lambda$ that
\begin{equation}
\label{eq:0712-2}
\begin{aligned}
X_{\lambda}(s) 
&= \frac{1}{\lambda p'}\int_0^{s}  |\omega_{\lambda}(t)|^{p'} \dt
= \int^s_0 \sin_{2,2p'}^{p'}{\bigl((p')^{-\frac{1}{p'}}\lambda^{\frac{1}{p}} t \bigr)}  \dt \\
&=\lambda^{-\frac{1}{p}} \int^{\lambda^{\frac{1}{p}} s}_0 \sin_{2,2p'}^{p'}{\bigl((p')^{-\frac{1}{p'}} \tau \bigr)} \,\mathrm{d}\tau
= \lambda^{-\frac{1}{p}} X_1(\lambda^{\frac{1}{p}} s).
\end{aligned}
\end{equation}
Similarly, we deduce from \eqref{eq:G-tangent}, \eqref{eq:X'Y'}, $\omega_{\lambda}(0)=0$ and \eqref{eq:0711-3}
that 
\begin{align}\label{eq:0712-3}
\begin{aligned}
Y_{\lambda}(s) 
= -\frac{1}{\lambda} \int^s_0 \omega_{\lambda}'(t) \dt 
= - \frac{1}{\lambda} \omega_{\lambda}(s) 
= (p')^\frac{1}{p'} \lambda^{-\frac{1}{p}} \sin_{2,2p'}{\bigl((p')^{-\frac{1}{p'}}\lambda^{\frac{1}{p}} s \bigr)}, 
\end{aligned}
\end{align}
which implies that $Y_1(s)= (p')^\frac{1}{p'} \sin_{2,2p'}{((p')^{-\frac{1}{p'}} s)}$ and 
\[
Y_{\lambda}(s) = \lambda^{-\frac{1}{p}} Y_{1}( \lambda^{\frac{1}{p}}s).
\]
Thus (iii) follows. 

Next we prove (iv). 
It follows from Lemma \ref{lemma:pre-G} that the curvature of $\Gamma_\lambda$ is given by $k_\lambda$. 
Recalling that
\[
\sin_{2,2p'} (\lambda^{\frac{1}{p}} (p')^{-1}s)>0 \quad  \text{in} \quad (0, 2L_{\lambda}),  \quad
\cos_{2,2p'} (\lambda^{\frac{1}{p}} (p')^{-1}s)>0 \quad  \text{in} \quad   (0,L_{\lambda}),
\] 
we infer from \eqref{eq:0711-3} and \eqref{eq:0710-6} that $k_{\lambda}(s)<0$ for all $s \in (0, 2{L_{\lambda}})$
and deduce from \eqref{eq:0712-1} and \eqref{eq:0711-1} that $k_{\lambda}'(s)<0$ for all $s \in (0, L_{\lambda})$. 

Finally, (v) follows immediately from (ii) and (iv) 
since $\theta_{\lambda}(s)=\int_0^{s} k_{\lambda}(t)\dt$.
Therefore Lemma \ref{scaling} follows. 
\end{proof}
\begin{remark}
When $p=2$, \eqref{eq:planar-EL} coincides with the Euler--Lagrange equation for the 
classical elastic energy, and the curve whose curvature satisfies \eqref{eq:planar-EL} is called free elastica.
Therefore, $\Gamma_{\lambda}$ yields a generalization of free elastica since the curvature of $\Gamma_{\lambda}$ satisfies the generalized Euler--Lagrange equation \eqref{eq:planar-EL}. 
Moreover, Lemma~\ref{scaling}-(iii) implies that  $\lambda$ plays a role as a scaling factor and in particular, $\Gamma_1$ determines $\Gamma_{\lambda}$ for any $\lambda>0$. 
\end{remark}
\subsection{Proof of Theorem \ref{Theorem:1.2}}
In order to prove the uniqueness of minimizers of problem \eqref{obstacle_problem:1.1}, 
in what follows we suppose that $\psi$ is a symmetric cone obstacle (see Section~\ref{sec:nonexistence}).
Then thanks to the 
properties of $\Gamma_\lambda$ such as the scale invariance and the monotonicity of the curvature, the strategy developed in \cite{Moist} can work.

First let us introduce the notion of the \emph{polar tangential angle} (e.g. see \cite[Definition 2.1]{Moist}). 
\begin{definition} \label{def:polar-tangential}
Let an arc length parameterized curve $\gamma \in C^2( [0, L] ; \R^2)$ satisfy $\gamma(0)=(0,0)$ and $\gamma(s)\neq(0,0)$ for $s\in(0,L]$. 
For $\gamma$, the \emph{polar tangential angle function} $\varpi : (0, L] \to \mathbb{R}$ is defined as a continuous function such that 
\[
R_{\varpi}\frac{\gamma(s)}{|\gamma(s)|}=\gamma_s(s) \quad \text{for all} \quad s \in (0, L],  
\]
where $R_\theta$ stands for the counterclockwise rotation matrix through angle $\theta \in \mathbb{R}$. 
\end{definition}
From now on,  $\varpi_\lambda : (0, 2L_{\lambda}] \to \R$ denotes the polar tangential angle function for $\Gamma_{\lambda}:[0, 2L_{\lambda}] \to \mathbb{R}^2$, i.e., 
\begin{align}\label{eq:0808-1}
R_{\varpi_{\lambda}} \frac{\Gamma_{\lambda}(s)}{|\Gamma_{\lambda}(s)|} = \Gamma_{\lambda}'(s) \quad \text{for all}\quad  s \in (0,  2L_{\lambda}].
\end{align}
\begin{remark}
Since $k_\lambda \in C([0,2L_\lambda])$, the curve $\Gamma_\lambda$ belongs to  $C^2([0,2L_\lambda]; \R^2)$.
In \cite{Moist}, the polar tangential angle function is defined for curves of class $C^{\infty}$ to ensure the differentiability of $\varpi$.
However, we notice that for each $C^2$-curve one can still obtain $\varpi \in C^1$.
Thus the methods developed in \cite{Moist} are applicable to $C^2$-curves, including $\Gamma_\lambda$.
\end{remark}

\begin{figure}[htbp]
\centering
\includegraphics[width=6cm]{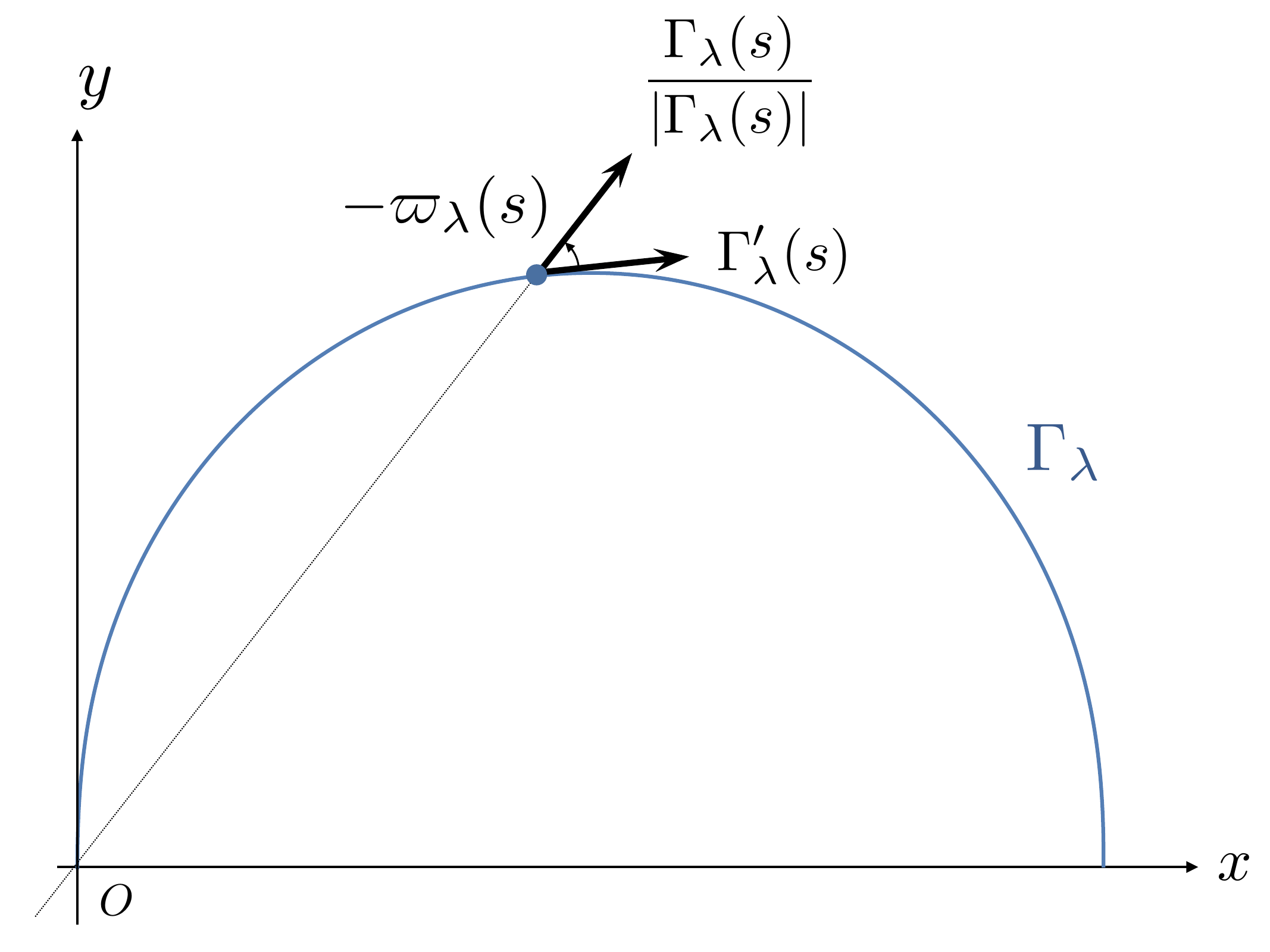} 
\includegraphics[width=6cm]{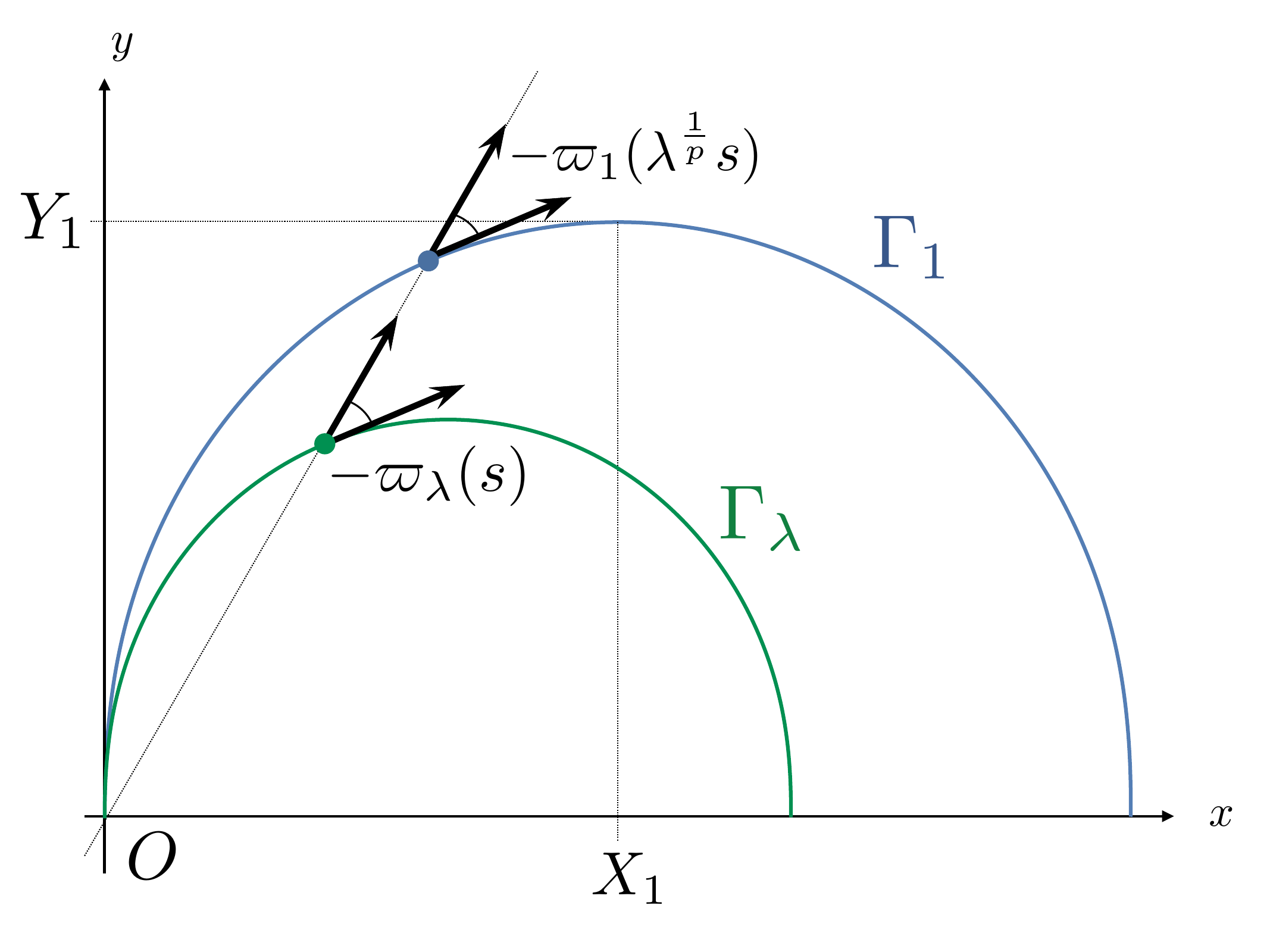}
\caption{Polar tangential angle in Definition~\ref{def:polar-tangential} (left). Scaling behavior of the polar tangential angle, cf. Lemma~\ref{polar_tangent} (right).}
\end{figure}
\begin{lemma}[Monotonicity and scale invariance of $\varpi_\lambda$] \label{polar_tangent}
For each $\lambda>0$, the polar tangential angle $\varpi_{\lambda}$ is strictly monotonically decreasing in $(0, L_\lambda)$ and satisfies the following scaling property$\colon$ 
\begin{equation}\label{eq:0808-2}
\tan\big(\varpi_{\lambda}(s)\big) = \tan\big(\varpi_1(\lambda^{\frac{1}{p}}s)\big) \quad \text{for all} \quad  s \in (0,  L_{\lambda}].
\end{equation}
Moreover, it follows that for each $\lambda>0$
\begin{equation}
\label{eq:211020-1}
\lim_{s \downarrow 0} \tan \varpi_{\lambda}(s) = 0, \quad \tan \varpi_\lambda(L_\lambda) = -\frac{Y_1(L_1)}{X_1(L_1)}, 
\end{equation}
and 
\begin{equation} \label{eq:0907-4}
X_1(L_1)=\frac{1}{2} (p')^{-1+\frac{1}{p'}} \mathrm{B}\bigl( 1-\tfrac{1}{2p}, \tfrac{1}{2} \bigr), \quad
Y_1(L_1)=(p')^{\frac{1}{p'}}.
\end{equation}
\end{lemma}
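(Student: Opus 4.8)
The plan is to reduce everything to an explicit expression for $\tan\varpi_\lambda$. Writing $e_\lambda(s):=\Gamma_\lambda(s)/|\Gamma_\lambda(s)|$, the defining identity \eqref{eq:0808-1} together with $|e_\lambda|=|\Gamma_\lambda'|=1$ gives $\cos\varpi_\lambda=e_\lambda\cdot\Gamma_\lambda'$ and $\sin\varpi_\lambda=e_\lambda\times\Gamma_\lambda'$ (with $a\times b:=a_1b_2-a_2b_1$ for $a=(a_1,a_2),b=(b_1,b_2)$), hence
\[
\tan\varpi_\lambda(s)=\frac{A_\lambda(s)}{B_\lambda(s)},\qquad A_\lambda:=X_\lambda Y_\lambda'-Y_\lambda X_\lambda',\quad B_\lambda:=X_\lambda X_\lambda'+Y_\lambda Y_\lambda'.
\]
By \eqref{eq:0711-2}, \eqref{eq:X'Y'} and \eqref{eq:0712-3}, on $(0,L_\lambda)$ all four quantities $X_\lambda,X_\lambda',Y_\lambda,Y_\lambda'$ are positive (using $\sin_{2,2p'},\cos_{2,2p'}>0$ on $(0,\pi_{2,2p'}/2)$), so $B_\lambda>0$ there; since $\cos\varpi_\lambda=B_\lambda/|\Gamma_\lambda|>0$, the function $\varpi_\lambda$ takes values in $(-\pi/2,\pi/2)$ on $(0,L_\lambda)$, hence it is strictly decreasing there if and only if $\tan\varpi_\lambda$ is. The scaling identity \eqref{eq:0808-2} is then immediate from Lemma~\ref{scaling}(iii): inserting $X_\lambda(s)=\lambda^{-1/p}X_1(\lambda^{1/p}s)$, $Y_\lambda(s)=\lambda^{-1/p}Y_1(\lambda^{1/p}s)$ (so $X_\lambda'(s)=X_1'(\lambda^{1/p}s)$, $Y_\lambda'(s)=Y_1'(\lambda^{1/p}s)$) into $A_\lambda/B_\lambda$ makes the prefactor $\lambda^{-1/p}$ cancel.

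For the monotonicity I would differentiate $A_\lambda$ and $B_\lambda$. Since $\Gamma_\lambda$ is arc-length parametrized with curvature $k_\lambda$, \eqref{eq:arc-length-para} forces $\Gamma_\lambda''\perp\Gamma_\lambda'$, and then \eqref{eq:G-curvature=k_l} yields $X_\lambda''=-k_\lambda Y_\lambda'$ and $Y_\lambda''=k_\lambda X_\lambda'$. This gives
\[
A_\lambda'=k_\lambda B_\lambda,\qquad B_\lambda'=1-k_\lambda A_\lambda,\qquad A_\lambda^2+B_\lambda^2=|\Gamma_\lambda|^2,
\]
the last identity because $A_\lambda=|\Gamma_\lambda|\sin\varpi_\lambda$ and $B_\lambda=|\Gamma_\lambda|\cos\varpi_\lambda$. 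Hence
\[
\frac{d}{ds}\tan\varpi_\lambda=\frac{A_\lambda'B_\lambda-A_\lambda B_\lambda'}{B_\lambda^2}=\frac{k_\lambda(A_\lambda^2+B_\lambda^2)-A_\lambda}{B_\lambda^2}=\frac{g_\lambda}{B_\lambda^2},\qquad g_\lambda:=k_\lambda|\Gamma_\lambda|^2-A_\lambda.
\]
Now $g_\lambda(0)=0$ because $k_\lambda(0)=0$, $\Gamma_\lambda(0)=(0,0)$ and $A_\lambda(0)=0$, while $g_\lambda'=k_\lambda'|\Gamma_\lambda|^2+2k_\lambda B_\lambda-A_\lambda'=k_\lambda'|\Gamma_\lambda|^2+k_\lambda B_\lambda$; by Lemma~\ref{scaling}(iv) one has $k_\lambda<0$ and $k_\lambda'<0$ on $(0,L_\lambda)$, and $|\Gamma_\lambda|^2>0$, $B_\lambda>0$ there, so $g_\lambda'<0$ on $(0,L_\lambda)$. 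Thus $g_\lambda<0$ on $(0,L_\lambda)$, so $\tan\varpi_\lambda$, and hence $\varpi_\lambda$, is strictly decreasing there.

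For the remaining assertions I work with $\lambda=1$, which suffices by \eqref{eq:0808-2} and $\lambda^{1/p}L_\lambda=L_1$. As $s\downarrow0$, using $\omega_1(s)\sim-s$ one gets from \eqref{eq:X'Y'}, \eqref{eq:0712-2}, \eqref{eq:0712-3} that $X_1(s)=O(s^{p'+1})$, $X_1'(s)=O(s^{p'})$, $Y_1(s)\sim s$ and $Y_1'(s)\to1$, hence $A_1(s)=O(s^{p'+1})$, $B_1(s)\sim s$, so $\tan\varpi_1(s)=A_1(s)/B_1(s)\to0$, which gives the first part of \eqref{eq:211020-1}. At $s=L_1$, Lemma~\ref{scaling}(ii) gives $\Gamma_1'(L_1)=(1,0)$, i.e. $X_1'(L_1)=1$, $Y_1'(L_1)=0$, so $A_1(L_1)=-Y_1(L_1)$, $B_1(L_1)=X_1(L_1)$ and $\tan\varpi_\lambda(L_\lambda)=\tan\varpi_1(L_1)=-Y_1(L_1)/X_1(L_1)$. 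Finally, \eqref{eq:0712-3} gives $Y_1(L_1)=(p')^{1/p'}\sin_{2,2p'}((p')^{-1/p'}L_1)=(p')^{1/p'}\sin_{2,2p'}(\pi_{2,2p'}/2)=(p')^{1/p'}$, and \eqref{eq:0712-2} gives $X_1(L_1)=(p')^{1/p'}\int_0^{\pi_{2,2p'}/2}\sin_{2,2p'}^{p'}(\tau)\,\mathrm{d}\tau$; substituting $x=\sin_{2,2p'}(\tau)$ (so $\mathrm{d}\tau=(1-x^{2p'})^{-1/2}\,\mathrm{d}x$ by \eqref{eq:0711-4}) and then $u=x^{2p'}$ turns this into $\tfrac{1}{2p'}\int_0^1 u^{-1/2+1/(2p')}(1-u)^{-1/2}\,\mathrm{d}u=\tfrac{1}{2p'}\mathrm{B}(1-\tfrac{1}{2p},\tfrac12)$ (using $\tfrac12+\tfrac1{2p'}=1-\tfrac1{2p}$), so $X_1(L_1)=\tfrac12(p')^{-1+1/p'}\mathrm{B}(1-\tfrac1{2p},\tfrac12)$, which is \eqref{eq:0907-4}.

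The main obstacle is the monotonicity of $\varpi_\lambda$: everything hinges on identifying the auxiliary quantity $g_\lambda=k_\lambda|\Gamma_\lambda|^2-A_\lambda$ whose sign governs that of $(\tan\varpi_\lambda)'$, and on reducing the inequality $g_\lambda<0$ — via $g_\lambda(0)=0$ and $g_\lambda'=k_\lambda'|\Gamma_\lambda|^2+k_\lambda B_\lambda$ — to the already-established sign information $k_\lambda<0$, $k_\lambda'<0$ on $(0,L_\lambda)$ from Lemma~\ref{scaling}(iv); once this is in place, the scaling identity, the boundary limits, and the Beta-function evaluation are routine.
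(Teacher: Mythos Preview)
Your argument is correct. The computations for the scaling identity, the limits, and the Beta-function evaluation are essentially the same as the paper's, just organized around the explicit formula $\tan\varpi_\lambda=A_\lambda/B_\lambda$ instead of $\cos\varpi_\lambda=e_\lambda\cdot\Gamma_\lambda'$.

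The one genuine methodological difference lies in the monotonicity step. The paper does not compute anything there: it observes that $k_\lambda k_\lambda'>0$ on $(0,L_\lambda)$ by Lemma~\ref{scaling}(iv) and then invokes \cite[Corollary~2.7]{Moist} as a black box to conclude that $\varpi_\lambda$ is strictly decreasing. You instead prove this from scratch, by differentiating $\tan\varpi_\lambda$ and controlling the sign through the auxiliary $g_\lambda=k_\lambda|\Gamma_\lambda|^2-A_\lambda$, whose derivative $g_\lambda'=k_\lambda'|\Gamma_\lambda|^2+k_\lambda B_\lambda$ is manifestly negative under exactly the sign conditions $k_\lambda<0$, $k_\lambda'<0$, $B_\lambda>0$. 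In effect you have reproved (the relevant case of) Miura's corollary in situ. This buys self-containment and makes transparent where the hypothesis $k_\lambda k_\lambda'>0$ actually enters, at the price of a short computation; the paper's route is terser but relies on an external reference.
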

\begin{proof}
To begin with, we show the monotonicity of $\varpi_\lambda(s)$.
Lemma~\ref{scaling}-(iv) asserts that the curvature $k_\lambda$ of $\Gamma_\lambda$ satisfies $k_{\lambda}(s) k'_{\lambda}(s)>0$ for $s \in (0, L_\lambda)$, 
and hence by applying \cite[Corollary 2.7]{Moist} to $\Gamma_\lambda$, we deduce that $\varpi_{\lambda}$ is monotonically decreasing in $(0, L_\lambda)$.

Next we check \eqref{eq:211020-1}. 
Combining $\Gamma'_\lambda(L_\lambda)=(1,0)$ with the definition of $\omega_\lambda$, we obtain
\[
\tan \varpi_\lambda(L_\lambda) =-\frac{Y_\lambda(L_\lambda)}{X_\lambda(L_\lambda)} =-\frac{Y_1(L_1)}{X_1(L_1)},
\]
where we used Lemma~\ref{scaling}-(iii) in the last equality.
On the other hand, it follows from the definition of $\varpi_\lambda(s)$ that
\begin{equation} \label{eq:0718-5}
\cos \varpi_\lambda(s) = \frac{\Gamma_\lambda(s)}{|\Gamma_\lambda(s)|} \cdot \Gamma'_\lambda(s),
\end{equation}
where $\cdot$ denotes the Euclidean inner product.
By \eqref{eq:G-tangent} we have 
\[
\lim_{s \downarrow 0} \frac{ Y_{\lambda}(s) }{ X_{\lambda}(s) } 
= \lim_{s \downarrow 0} \frac{ Y'_{\lambda}(s) }{ X'_{\lambda}(s) } 
= \infty, 
\]
which implies that
\[
\frac{\Gamma_\lambda(s)}{|\Gamma_\lambda(s)|} 
 = \Bigl( \frac{X_{\lambda}(s)}{ \sqrt{ X_{\lambda}(s)^2 + Y_{\lambda}(s)^2} } , 
  \frac{Y_{\lambda}(s)}{ \sqrt{X_{\lambda}(s)^2 + Y_{\lambda}(s)^2} } \Bigr) 
  \to (0,1)
\]
as $s \downarrow 0$. 
This together with \eqref{eq:0718-5} and Lemma~\ref{scaling}-(ii) gives 
\[ 
\cos \varpi_\lambda(s) \to 1  \quad \text{as} \quad s \downarrow 0, 
\]
i.e., $\tan \varpi_\lambda(s) \to 0$ as $s \downarrow 0$. 

We now turn to prove \eqref{eq:0808-2}. The graph property of $\Gamma_{\lambda}$ ensures that $\tan \varpi_{\lambda}$ is a continuous function on $(0,L_{\lambda}]$. 
By the scaling property Lemma~\ref{scaling}-(iii), we deduce from \eqref{eq:0808-1} that 
\[
\cos{\varpi_\lambda(s)}= \dfrac{\Gamma_\lambda(s)}{|\Gamma_\lambda(s)|}\cdot \Gamma'_\lambda(s) 
 = \dfrac{\Gamma_1(\lambda^{\frac{1}{p}} s)}{|\Gamma_1(\lambda^{\frac{1}{p}} s)|}\cdot \Gamma'_1(\lambda^{\frac{1}{p}} s)
 = \cos{\varpi_1(\lambda^{\frac{1}{p}} s)}
\]
for all $s \in (0, 2 L_\lambda)$. 
By \eqref{eq:211020-1} and the monotonicity of $\varpi_\lambda$, 
$\tan(\varpi_{\lambda}(s))$ and $\tan(\varpi_1(\lambda^{\frac{1}{p}}s))$ are negative for any $s \in (0, L_{\lambda}]$ and hence equal. 
Hence we obtain \eqref{eq:0808-2}. 
It remains to show \eqref{eq:0907-4}.
We infer from \eqref{eq:0712-3} and $L_1=(p')^{\frac{1}{p'}} \pi_{2, 2p'}/2$ that
\[
Y_1( L_1) = (p')^{\frac{1}{p'}} \sin_{2,2p'}{\bigl( \frac{\pi_{2, 2p'}}{2} \bigr)} = (p')^{\frac{1}{p'}}, 
\]
and it follows from \eqref{cos_pq} and \eqref{eq:0711-4}
and  \eqref{eq:0712-2} that
\begin{align*}
X_1(L_1)
&=\int^{L_1}_0  \sin_{2,2p'}^{p'} \bigl( (p')^{-\frac{1}{p'}} t \bigr) \dt 
= (p')^{\frac{1}{p'}} \int^{ \frac{\pi_{2, 2p'}}{2}}_0  \sin_{2,2p'}^{p'}  \sigma \,\mathrm{d}\sigma \\
&=(p')^{\frac{1}{p'}} \int^1_0 \frac{\xi^{p'}}{\sqrt{1-\xi^{2p'}}} \dxi \\
&= \dfrac{1}{2} (p')^{-1+\frac{1}{p'}} \int_0^{1} t^{-\frac{1}{2p}} (1-t)^{-\frac{1}{2}} \dt
=\frac{1}{2} (p')^{-1+\frac{1}{p'}} \mathrm{B}\bigl(1-\tfrac{1}{2p},\tfrac{1}{2}\bigr).
\end{align*}
Therefore Lemma \ref{polar_tangent} follows.
\end{proof}

Since Lemma~\ref{polar_tangent} has been shown, 
we are now ready to generalize the uniqueness and the sharp (non)existence result obtained in 
\cite{Moist} for $\mathcal{E}_2$.
We here stress that the application of \cite{Moist} to our problem is not clear when $p\neq2$, due to the degeneracy and the loss of regularity.
However, with the help of Propsition~\ref{prop:reguw} and Proposition~\ref{prop:deg}, we are able to overcome the difficulties.
Consequently, similar to \cite[Eq. 3.14]{Moist}, we have the following 
\begin{lemma} \label{lem:0827-1}
Let $h>0$ be given.
Suppose that $u \in W^{3,q}(0, 1)$ for some $q>1$ satisfies for some $x_0\in(0,1)$
\begin{align} \label{eq:0827-1}
\begin{cases}
 \displaystyle{
 \int_0^{x_0} \Bigl[ p  \frac{ |\kappa_u|^{p-2}\kappa_u}{1+(u')^2} \phi''+ (1-3p) \frac{|\kappa_u |^{p} u'}{\sqrt{1+(u')^2}} \phi' \Bigr] \dx =0
} 
\quad \text{for all} \quad \phi \in C^{\infty}_0(0, x_0), \\
 u(0)=0, \quad u''(0)=0, \quad  u(x_0)=h, \quad u'(x_0)=0,
\end{cases}
\end{align}
and 
\begin{equation}
\label{eq:211025-1}
u''(x)<0 \quad \text{for all} \quad x \in (0, x_0).
\end{equation}
Then there is $\lambda_u>0$ such that the arc length parameterization $\gamma_u:[0,\mathbf{s}(x_0)]\to\R^2$ of $\mathrm{graph}(u)$ is given by
\begin{align}\label{eq:planar_curve}
\gamma_u(\mathbf{s}) = R_{-\tfrac{\pi}{2}+\theta_{u}} \Gamma_{\lambda_{u}}(\mathbf{s}), \quad 
\mathbf{s} \in [0, \mathbf{s}(x_0)],
\end{align}
where $\theta_u:=\arctan u'(0) \in (0, \pi/2)$.
Furthermore, for $\mathbf{s}(x_0)$ it follows that 
\begin{equation}
\label{eq:0827-2}
\tan{\bigl( -\varpi_{\lambda_u}(\mathbf{s}(x_0)) \bigr)} = \frac{ h}{x_0}
\end{equation}
and 
\begin{align}\label{eq:ubound-length}
\lambda_u^{1/p}\mathbf{s}(x_0) < L_1.
\end{align}
\end{lemma}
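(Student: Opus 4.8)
The plan is to establish Lemma~\ref{lem:0827-1} in three stages: first reconstruct the curve $\gamma_u$ from the ODE information, then pin down the scaling parameter $\lambda_u$ and the rotation, and finally extract the two scalar identities \eqref{eq:0827-2} and \eqref{eq:ubound-length} from the geometry of $\Gamma_{\lambda_u}$.

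\textbf{Step 1: identification with a free $p$-elastica.} By hypothesis $u \in W^{3,q}(0,1)$ for some $q > 1$ and $\kappa_u$ satisfies \eqref{eq:0827-1} on $(0,x_0)$, so Lemma~\ref{p-EL} applies on $E = (0,x_0)$ and shows that the arc-length curvature $\tilde\kappa_u$ satisfies the semilinear equation \eqref{eq:0710-4} on $(0,\mathbf{s}(x_0))$ in the weak sense. Setting $\omega := |\tilde\kappa_u|^{p-2}\tilde\kappa_u$ and using $u \in W^{3,q} \hookrightarrow C^2$ (so $\tilde\kappa_u$, hence $\omega$, is continuous), Lemma~\ref{lem:regularity-omega} upgrades $\omega$ to $C^2([0,\mathbf{s}(x_0)])$ solving \eqref{eq:w-equation}. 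The boundary conditions $u(0)=0$, $u''(0)=0$ give $\tilde\kappa_u(0)=0$, hence $\omega(0)=0$; let $\lambda_u := -\omega'(0)$. First I would check $\lambda_u > 0$: the sign-nondegeneracy \eqref{eq:211025-1} says $u''<0$ on $(0,x_0)$, hence $\tilde\kappa_u < 0$ on $(0,\mathbf{s}(x_0))$, so $\omega < 0$ there; combined with $\omega(0)=0$ this forces $\omega'(0) \le 0$, and $\omega'(0)=0$ would give $\omega \equiv 0$ by uniqueness for \eqref{eq:0606-3}, a contradiction. (If the construction forced $\lambda_u = 0$ one would need to exclude it here; the nondegeneracy hypothesis is precisely what rules this out.) Then by Lemma~\ref{lem:0606-1} and the definition \eqref{eq:0711-3}, $\omega = \omega_{\lambda_u}$, hence $\tilde\kappa_u = k_{\lambda_u}$ on $[0,\mathbf{s}(x_0)]$ by \eqref{eq:0710-6}. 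A planar curve is determined modulo rigid motions by its arc-length curvature, so $\gamma_u$ and $\Gamma_{\lambda_u}$ differ by an element of the Euclidean group; since both start at the origin ($\gamma_u(0) = (0, u(0)) = (0,0)$ and $\Gamma_{\lambda_u}(0) = (0,0)$ by Lemma~\ref{lemma:pre-G}), the motion is a rotation $R_\alpha$. To find $\alpha$, compare unit tangents at $s=0$: $\Gamma_{\lambda_u}'(0) = (0,1)$ by Lemma~\ref{scaling}~(ii), while $\gamma_u'(0) = \frac{(1, u'(0))}{\sqrt{1+u'(0)^2}} = (\cos\theta_u, \sin\theta_u)$ with $\theta_u = \arctan u'(0) \in (0,\pi/2)$ (positivity of $u'(0)$ follows from concavity of $u$ together with $u(0)=0$, $u(x_0)=h$ — or is part of what one reads off). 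Thus $R_\alpha(0,1) = (\cos\theta_u,\sin\theta_u)$, i.e. $\alpha = -\tfrac\pi2 + \theta_u$, giving \eqref{eq:planar_curve}.

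\textbf{Step 2: the endpoint identity \eqref{eq:0827-2}.} At $\mathbf{s} = \mathbf{s}(x_0)$ we have $\gamma_u(\mathbf{s}(x_0)) = (x_0, u(x_0)) = (x_0, h)$. Apply the relation \eqref{eq:planar_curve}: $(x_0,h)^T = R_{-\pi/2+\theta_u}\,\Gamma_{\lambda_u}(\mathbf{s}(x_0))$, so $\Gamma_{\lambda_u}(\mathbf{s}(x_0)) = R_{\pi/2 - \theta_u}(x_0,h)^T$. Also $u'(x_0)=0$ means $\gamma_u'(\mathbf{s}(x_0)) = (1,0)$, hence $\Gamma_{\lambda_u}'(\mathbf{s}(x_0)) = R_{\pi/2-\theta_u}(1,0)^T = (\sin\theta_u, -\cos\theta_u + \ldots)$ — more usefully, $\Gamma_{\lambda_u}'(\mathbf{s}(x_0))$ is the rotation of $(1,0)$ by $\pi/2-\theta_u$. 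Now the defining relation \eqref{eq:0808-1} of the polar tangential angle reads $R_{\varpi_{\lambda_u}(\mathbf{s})}\,\frac{\Gamma_{\lambda_u}(\mathbf{s})}{|\Gamma_{\lambda_u}(\mathbf{s})|} = \Gamma_{\lambda_u}'(\mathbf{s})$; since $R_{\pi/2-\theta_u}$ is orthogonal and $R_{\varpi}$ commutes with it, applying $R_{-(\pi/2-\theta_u)}$ to both sides shows that $\varpi_{\lambda_u}(\mathbf{s})$ is rotation-invariant, so it equals the polar tangential angle of the curve $\mathbf{s}\mapsto (x_0, h)\cdot(\text{that is, of }\gamma_u\text{ translated})$ evaluated appropriately. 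Concretely: $\cos\varpi_{\lambda_u}(\mathbf{s}(x_0)) = \frac{(x_0,h)}{\sqrt{x_0^2+h^2}}\cdot(1,0) = \frac{x_0}{\sqrt{x_0^2+h^2}}$ and $\sin\varpi_{\lambda_u}(\mathbf{s}(x_0)) = \det\!\begin{pmatrix} x_0/\sqrt{x_0^2+h^2} & 1 \\ h/\sqrt{x_0^2+h^2} & 0\end{pmatrix} = -\frac{h}{\sqrt{x_0^2+h^2}}$, whence $\tan\bigl(-\varpi_{\lambda_u}(\mathbf{s}(x_0))\bigr) = h/x_0$, which is \eqref{eq:0827-2}.

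\textbf{Step 3: the length bound \eqref{eq:ubound-length}.} I would argue by contradiction. Suppose $\lambda_u^{1/p}\mathbf{s}(x_0) \ge L_1$, i.e. $\mathbf{s}(x_0) \ge L_{\lambda_u}$ (using $L_{\lambda_u} = \lambda_u^{-1/p}L_1$ from Lemma~\ref{scaling}~(iii) applied to the definition of $L_\lambda$). By Lemma~\ref{scaling}~(ii), $\Gamma_{\lambda_u}'(L_{\lambda_u}) = (1,0)$, so the tangential angle $\theta_{\lambda_u}$ of $\Gamma_{\lambda_u}$ vanishes at $L_{\lambda_u}$; by Lemma~\ref{scaling}~(v) it is strictly decreasing from $\pi/2$ through $0$ to $-\pi/2$. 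Now $u'(x_0)=0$ forces $\gamma_u'(\mathbf{s}(x_0))=(1,0)$, and under the rotation $R_{-\pi/2+\theta_u}$ this means the tangent angle of $\Gamma_{\lambda_u}$ at $\mathbf{s}(x_0)$ equals $\pi/2 - \theta_u \in (0,\pi/2)$, which is strictly positive; but if $\mathbf{s}(x_0)\ge L_{\lambda_u}$ then strict monotonicity of $\theta_{\lambda_u}$ gives $\theta_{\lambda_u}(\mathbf{s}(x_0)) \le \theta_{\lambda_u}(L_{\lambda_u}) = 0$, a contradiction. (One must make sure $\mathbf{s}(x_0) \le 2L_{\lambda_u}$ so that $\Gamma_{\lambda_u}$ is actually defined there; this follows because the graph property of $u$ and concavity keep $u' $ monotone decreasing, hence $\gamma_u$ cannot wrap past the full half-period $2L_{\lambda_u}$ where $\Gamma'$ returns to $(0,-1)$ — alternatively one notes $\theta_{\lambda_u}$ exits $(-\pi/2,\pi/2)$ only at the endpoints.) Hence $\lambda_u^{1/p}\mathbf{s}(x_0) < L_1$.

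\textbf{Main obstacle.} The delicate point is Step~3, specifically controlling where $\mathbf{s}(x_0)$ sits relative to the half-period $2L_{\lambda_u}$ of $\Gamma_{\lambda_u}$: one has the ODE only on $(0,x_0)$, and must rule out that $\gamma_u$ has already "turned around" (passed $L_{\lambda_u}$ and come back) before reaching $x_0$. This is where the hypotheses $u$ graphical (so $X_{\lambda_u}' > 0$ along the relevant range, Lemma~\ref{scaling}~(i)) and $u'' < 0$ on $(0,x_0)$ from \eqref{eq:211025-1} are essential — together they pin $\mathbf{s}(x_0)$ into $(0, L_{\lambda_u}]$ and in fact into $(0, L_{\lambda_u})$ since the tangent angle is still strictly positive there. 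Secondarily, one should double-check in Step~1 that the rigid motion relating $\gamma_u$ and $\Gamma_{\lambda_u}$ has no reflection component; this is guaranteed because both curves have the \emph{same} (not opposite) curvature function $k_{\lambda_u}$, and a reflection would negate curvature.
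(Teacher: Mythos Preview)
Your proof is correct and follows essentially the same route as the paper's: apply Lemma~\ref{p-EL} and Lemma~\ref{lem:regularity-omega} to obtain the semilinear ODE for $\omega$, identify $\omega=\omega_{\lambda_u}$ via the initial value problem \eqref{eq:0606-3}, match initial data to determine the rotation, and then read off \eqref{eq:0827-2} and \eqref{eq:ubound-length} from the geometry of $\Gamma_{\lambda_u}$. The only cosmetic differences are that the paper proves \eqref{eq:ubound-length} before \eqref{eq:0827-2} and argues directly (the tangent angle of $\Gamma_1$ at $\lambda_u^{1/p}\mathbf{s}(x_0)$ equals $\tfrac{\pi}{2}-\theta_u\in(0,\tfrac{\pi}{2})$, so Lemma~\ref{scaling}\,(v) forces $\lambda_u^{1/p}\mathbf{s}(x_0)\in(0,L_1)$) rather than by contradiction; your extra remarks on excluding reflections and on why $\mathbf{s}(x_0)\le 2L_{\lambda_u}$ (which follows already from $\tilde\kappa_u<0$ on $(0,\mathbf{s}(x_0))$ together with $k_{\lambda_u}\ge 0$ beyond $2L_{\lambda_u}$) are points the paper leaves implicit.
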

\begin{proof}
Let $u \in W^{3,q}(0, 1)$ satisfy \eqref{eq:0827-1}-\eqref{eq:211025-1} with some $q>1$.
Since $u'(x)$ is monotonically decreasing 
on $[0, x_0]$ and $u'(x_0)=0$, we see that $u'(0) > 0$. 
Thus $\theta_u:=\arctan u'(0)$ belongs to $(0, \tfrac{\pi}{2})$. 
Let $\mathbf{s}$ be the arc length parameter of $(x, u(x))$, i.e., $\mathbf{s}=\mathbf{s}(x)=\int^x_0 \sqrt{1+u'(y)^2} \dy$, 
and $\gamma_u(\mathbf{s})$ denotes the arc length parameterization of $(x, u(x))$. 
Let $\tilde{\kappa}_u(\mathbf{s})$ be the curvature of $\gamma_u(\mathbf{s})$. 
It follows from \eqref{eq:0827-1} and Lemma~\ref{p-EL} that $\tilde{\kappa}_u(\mathbf{s})$ satisfies 
\[
\int^{\mathbf{s}(x_0)}_0  \bigl[ p |\tilde{\kappa}_u|^{p-2} \tilde{\kappa}_u \varphi'' + (p-1) |\tilde{\kappa}_u|^{p} \tilde{\kappa}_u \varphi \bigr]\, \mathrm{d}\mathbf{s} =0
\quad \text{for all}\quad  \varphi \in C^{\infty}_0(0, \mathbf{s}(x_0)). 
\]
Then setting 
\[
\xi_u(\mathbf{s}):= |\tilde{\kappa}_u(\mathbf{s})|^{p-2} \tilde{\kappa}_u(\mathbf{s}), \quad \mathbf{s}\in [0, \mathbf{s}(x_0)],
\]
we infer from Lemma~\ref{lem:regularity-omega} that  $\xi_u \in C^2([0,\mathbf{s}(x_0)])$ and $\xi_u$ satisfies \eqref{eq:0606-2}.
Moreover, \eqref{eq:0827-1} 
implies that $\xi_u (0) = 0$.
By uniqueness of solutions for the initial value problem \eqref{eq:0606-3} and the nondegeneracy of $u''$ (see Proposition~\ref{prop:deg}), it turns out that $\xi_u = \omega_{\lambda_u}$ with 
$$\lambda_u:=-\xi_u'(0).$$
Notice that $\lambda_u>0$ follows from $\xi_u (0) = 0$ and $\xi_u(s)>0$ for $s>0$ and near $0$ by \eqref{eq:211025-1}.
Recalling \eqref{eq:0710-6}, we obtain
\begin{equation}
\label{eq:211024-2}
\tilde{\kappa}_u(\mathbf{s})
=|\xi_u(\mathbf{s})|^{\frac{2-p}{p-1}} \xi_u(\mathbf{s}) 
=|\omega_{\lambda_u}(\mathbf{s})|^{\frac{2-p}{p-1}}\omega_{\lambda_u}(\mathbf{s})
=k_{\lambda_u}(\mathbf{s}). 
\end{equation}
Furthermore, combining \eqref{eq:211024-2} with the fact that both curves $\gamma_u$ and $R_{-\tfrac{\pi}{2}+\theta_u} \Gamma_{\lambda_u}$ coincide up to first order at the endpoint $\mathbf{s}=0$, we also obtain
\begin{align*} 
\gamma_u (\mathbf{s}) 
= R_{-\tfrac{\pi}{2}+\theta_u} \Gamma_{\lambda_u}(\mathbf{s})
=\lambda_u^{-{1}/{p}} R_{-\tfrac{\pi}{2}+\theta_u} \Gamma_1(\lambda_u^{{1}/{p}}\mathbf{s}), \quad 
\mathbf{s} \in [0, \mathbf{s}(x_0)].
\end{align*}
Therefore, \eqref{eq:planar_curve} follows.

Next we show \eqref{eq:ubound-length}.
The condition $u'(x_0)=0$ and $u(x_0)=h$ clearly imply  
\begin{equation}
\label{eq:0901-1}
\gamma'_u(\mathbf{s}(x_0))=(1,0) \quad \text{and} \quad \gamma_u(\mathbf{s}(x_0))=(x_0, h)
\end{equation}
respectively.
Substituting $\mathbf{s}= \mathbf{s}(x_0)$ into \eqref{eq:planar_curve} after differentiating with respect to $\mathbf{s}$, we infer from \eqref{eq:0901-1} that
\[
\Gamma'_1 \big(\lambda_u^{{1}/{p}}\mathbf{s}(x_0) \big)
= R_{\tfrac{\pi}{2}-\theta_u}
\begin{pmatrix}
1 \\
0
\end{pmatrix}.
\]
This together with $\tfrac{\pi}{2}-\theta_u\in(0,\pi/2)$ and Lemma~\ref{scaling}-(v) gives 
$0<\lambda_u^{{1}/{p}}\mathbf{s}(x_0) < L_1$.
Thus we obtain \eqref{eq:ubound-length}.
It remains to show \eqref{eq:0827-2}.
We observe from \eqref{eq:planar_curve} and \eqref{eq:0901-1} that
\[
\Gamma_{\lambda_u}(\mathbf{s}(x_0))
= R_{\tfrac{\pi}{2}-\theta_u}
\begin{pmatrix}
x_0 \\
h
\end{pmatrix}, 
\quad 
\Gamma_{\lambda_u}'(\mathbf{s}(x_0))
= R_{\tfrac{\pi}{2}-\theta_u}
\begin{pmatrix}
1\\
0
\end{pmatrix},
\]
from which it follows immediately that the polar tangential angle function $\varpi_{\lambda_u}(\mathbf{s})$ of $\Gamma_{\lambda_u}(\mathbf{s})$ satisfies \eqref{eq:0827-2}.
The claim follows.
\end{proof}
We are now in a position to prove Theorem~\ref{Theorem:1.2}. 
To this end, by way of \eqref{eq:uc} we define a function $u_c:[0,1] \to \R$ by 
\begin{equation}
u_c(x) :=  \frac{1}{c} \int^{EU_p^{-1}(\frac{c}{2})}_{EU_p^{-1}(\frac{c}{2}-c x)} \frac{t}{(1+t^2)^{\frac{3}{2}-\frac{1}{2p}}}\dt  \label{eq:211024-10}
\end{equation}
for $c \in (0, c_p)$, where $c_p$ is defined by \eqref{def_cp_sec6}.  
Similarly to \eqref{def:U_0^G} we also define 
\begin{equation} \label{def-U_0-sect6}
U_0(x):= \lim_{c \uparrow c_p} u_c(x).  
\end{equation}
We note that $u_c$ uniformly converges to $U_0$ in $[0,1]$ as $c \uparrow c_p$. 
Moreover, since 
\begin{align*}
u_c(x)=\frac{p'}{c}
\Biggl[
\frac{1}{\big( 1+G^{-1}(\tfrac{c}{2}-cx)^2\big)^{\frac{1}{2} -\frac{1}{2p} }}
-\frac{1}{\big( 1+G^{-1}(\tfrac{c}{2})^2\big)^{\frac{1}{2} -\frac{1}{2p} }}
\Biggr],  
\end{align*}
we find 
\begin{align}\label{eq:U_0}
U_0(x)=
\begin{cases}
\dfrac{p'}{c_p} \dfrac{1}{\bigl( 1+G^{-1}(\tfrac{c_p}{2}-c_px)^2\bigr)^{\frac{1}{2} -\frac{1}{2p} }} \quad & \text{if} \quad x \in (0,1), \\
0 & \text{if} \quad x=0,1.
\end{cases}
\end{align}
Set
\begin{equation}\label{eq:h_*}
h_*:=U_0\bigl(\tfrac{1}{2} \bigr)=\dfrac{p'}{c_p}.
\end{equation}
Noting that
\[
c_p=2\int_0^{\infty} \frac{1}{(1+t^2)^{\frac{3}{2} - \frac{1}{2p} } } \dt
=\int_0^{\infty} \frac{1}{(1+ \tau)^{\frac{3}{2} - \frac{1}{2p} } } \frac{1}{\sqrt{\tau}} \,\mathrm{d}\tau = \mathrm{B}\bigl(\tfrac{1}{2}, 1-\tfrac{1}{2p} \bigr),
\]
we have
\begin{equation}\label{value:h_*}
h_*= \dfrac{p'}{\mathrm{B}(\tfrac{1}{2}, 1-\tfrac{1}{2p})}.
\end{equation}

\begin{proof}[Proof of Theorem~\ref{Theorem:1.2}]
We divide the proof into 3 steps. 

\textbf{Step 1.} {\sl We show the existence of minimizers.} 
Suppose that $\psi(\frac{1}{2})<h_*$. 
Then we have $\psi(x) < U_0(x)$ for all $x \in [0,1]$. 
Since $u_c$, defined by \eqref{eq:211024-10}, uniformly converges to $U_0$ as $c \uparrow c_p$,
there is $c_* \in (0, c_p)$ such that $u_{c_*} \geq \psi$ in $[0,1]$. 
Moreover, in view of \eqref{eq:energyuc} we notice that
\[
u_{c_*} \in M_{\rm sym}(\psi), \quad \mathcal{E}_p(u_{c_*})=({c_*})^p <(c_p)^p. 
\]
Thus we observe from Theorem~\ref{thm:sym-existence} that there exists 
a minimizer $u$ of $\mathcal{E}_p$ in $M_{\rm sym}(\psi)$. 
\textbf{Step 2.} {\sl We prove the uniqueness of minimizers.}
Let $u_1$, $u_2 \in M_{\rm sym}(\psi)$ be minimizers of $\mathcal{E}_p$ in $M_{\rm sym}(\psi)$. 
As we mentioned in Section \ref{section:6.1}, $u_1$ and $u_2$ are also critical points of $\mathcal{E}_p$ in $M(\psi)$ in the sense of Definition \ref{def:critical}. 
Thus Remark~\ref{rem:symcone} and Propositions~\ref{prop:deg}, \ref{prop:regu1} and \ref{prop:regu2} allow us 
to apply Lemma~\ref{lem:0827-1} for $u_1$ and $u_2$. 
From Lemma~\ref{lem:0827-1}, we find $\lambda_1, \lambda_2 >0$ such that for $\theta_{u_i}:= \arctan u_i'(0) \in (0, \pi/2)$
\begin{align}\label{eq:1210-4}
\begin{split}
\gamma_{u_1}(\mathbf{s}) 
&= R_{-\tfrac{\pi}{2}+\theta_{u_1}} \Gamma_{\lambda_{1}}(\mathbf{s})
=\lambda_1^{-\frac{1}{p}} R_{-\tfrac{\pi}{2}+\theta_{u_1}} \Gamma_1(\lambda_1^{\frac{1}{p}} \mathbf{s}), \quad 
\mathbf{s} \in [0, \mathbf{s}_{1}(\tfrac{1}{2})], \\
\gamma_{u_2}(\mathbf{s}) 
&= R_{-\tfrac{\pi}{2}+\theta_{u_2}} \Gamma_{\lambda_2}(\mathbf{s})
=\lambda_2^{-\frac{1}{p}}R_{-\tfrac{\pi}{2}+\theta_{u_2}} \Gamma_1(\lambda_2^{\frac{1}{p}} \mathbf{s}) , \quad 
\mathbf{s} \in [0, \mathbf{s}_{2}(\tfrac{1}{2})],
\end{split}
\end{align}
where $\mathbf{s}_{i}$ and $\gamma_{u_i}(\mathbf{s})$ denote the arc length parameter of $(x, u_i(x))$ and the arc length parameterization of $(x, u_i(x))$, respectively ($i=1,2$).
Moreover, $u_i'(\frac{1}{2})=0$ and $u_i(\frac{1}{2})=\psi(\tfrac{1}{2})$ yield   
\begin{align}
\gamma'_{u_1}(\mathbf{s}_1(\tfrac{1}{2}))&=\gamma'_{u_2}(\mathbf{s}_2(\tfrac{1}{2}))=(1,0), \label{eq:1210-2} \\ 
\gamma_{u_1}(\mathbf{s}_1(\tfrac{1}{2}))&=\gamma_{u_2}(\mathbf{s}_2(\tfrac{1}{2}))=(\tfrac{1}{2}, \psi(\tfrac{1}{2})), \label{eq:1210-3}
\end{align}
respectively.
By \eqref{eq:0827-2}, the polar tangential angle function $\varpi_{\lambda_i}$ of $\Gamma_{\lambda_i}$ satisfies
\begin{align}
 -\varpi_{\lambda_1}(\mathbf{s}_1(\tfrac{1}{2}))
=-\varpi_{\lambda_2}(\mathbf{s}_2(\tfrac{1}{2})) 
= \arctan\big(2 \psi(\tfrac{1}{2})\big).
\end{align}
This together with \eqref{eq:0808-2} implies that 
$
-\varpi_1(\lambda_1^{1/p}\mathbf{s}_1(\tfrac{1}{2}))
=-\varpi_1(\lambda_2^{1/p}\mathbf{s}_2(\tfrac{1}{2}))
$, which in combination with the monotonicity of the polar tangential angle (see Lemma~\ref{polar_tangent}) gives 
\begin{align}\label{eq:1210-5}
\lambda_1^{1/p}\mathbf{s}_1(\tfrac{1}{2})=\lambda_2^{1/p}\mathbf{s}_2(\tfrac{1}{2}).
\end{align}
Noting that \eqref{eq:1210-4} and \eqref{eq:1210-2} give
\begin{align*}
R_{-\tfrac{\pi}{2}+\theta_{u_1}} \Gamma_1'(\lambda_1^{{1}/{p}} \mathbf{s}_1(\tfrac{1}{2}))
=\gamma_{u_1}'(\mathbf{s}_1(\tfrac{1}{2})) 
=\gamma_{u_2}'(\mathbf{s}_2(\tfrac{1}{2})) 
=R_{-\tfrac{\pi}{2}+\theta_{u_2}} \Gamma_1'(\lambda_2^{{1}/{p}} \mathbf{s}_2(\tfrac{1}{2})),
\end{align*}
we infer from \eqref{eq:1210-5} that $\theta_{u_1}=\theta_{u_2}$.
Furthermore, combining $\theta_{u_1}=\theta_{u_2}$ with \eqref{eq:1210-4}, \eqref{eq:1210-3} and \eqref{eq:1210-5}, we obtain $\lambda_1=\lambda_2$.
In view of \eqref{eq:1210-4}, $\theta_{u_1}=\theta_{u_2}$ and $\lambda_1=\lambda_2$ imply $\gamma_{u_1}=\gamma_{u_2}$, so that  $u_1=u_2$.
Therefore, we obtain the uniqueness of minimizers.
\smallskip

\textbf{Step 3.} 
{\sl Finally we prove the nonexistence of minimizers of $\mathcal{E}_p$ in $M(\psi)$ with $\psi$ satisfying $\psi(\frac{1}{2}) \geq h_*$.} 
Suppose that there is a minimizer $u \in M(\psi)$ of $\mathcal{E}_p$ in $M(\psi)$. 
Since $u : [0, 1] \to \mathbb{R}$ is continuous and $u(0)=u(1)=0$, $u$ attains its maximum $u_{\rm max}>0$ at $x_{\rm max} \in (0,1)$. 
Without loss of generality we may assume that $x_{\rm max} \in (0, \tfrac{1}{2}]$. 
Since $\psi$ is a symmetric cone, we infer from Remark~\ref{rem:symcone} that $u>\psi$ in $(0, x_{\rm max})$. 
Moreover, recalling that $u$ is also a critical point of $\mathcal{E}_p$ in $M(\psi)$ in the sense of Definition \ref{def:critical}, we see that $u$ satisfies 
\begin{align*} 
\begin{cases}
 \displaystyle{\int^{x_{\rm max}}_0} \Bigl[ p  \dfrac{ |\kappa_{u}|^{p-2}\kappa_{u}}{1+(u')^2} \phi'' 
 + (1-3p) \dfrac{|\kappa_{u} |^{p} u'}{\sqrt{1+(u')^2}} \phi' \Bigr] \dx =0 
\ \  \text{for all} \ \,  \phi \in C^{\infty}_0(0, x_{\rm max}), \\
 u(0)=0, \quad u''(0)=0, \quad  u(x_{\rm max})= u_{\rm max}, \quad u'(x_{\rm max})=0,
\end{cases}
\end{align*}
and $u''(x) < 0$ for all $x \in (0, x_{\rm max})$. 
Thus, using Lemma~\ref{lem:0827-1} with $x_0=x_{\rm max}$, we obtain
\begin{align}\label{eq:falseheight}
2 h_* \leq 2\psi(\tfrac{1}{2}) 
\leq \frac{\psi(\tfrac{1}{2})}{x_{\rm max}} \leq  \frac{u(x_{\rm max})}{x_{\rm max}} 
= \tan \bigl(- \varpi_{\lambda_u}(\mathbf{s}(x_{\rm max})) \bigr). 
\end{align}
Here we used that $u(x_{\rm max}) \geq u(\frac{1}{2}) \geq \psi(\frac{1}{2})$.
On the other hand, by Lemma~\ref{polar_tangent} and \eqref{eq:ubound-length}, 
for the right-hand side of the above we see that 
\begin{align*}
\tan \big( -\varpi_{\lambda_u}(\mathbf{s}(x_{\rm max})) \big) 
&= \tan \big( -\varpi_{1}(\lambda_u^{{1}/{p}}\mathbf{s}(x_{\rm max})) \big) \\
&> \tan \big( -\varpi_{1}( L_1 ) \big) = \frac{Y_1(L_1)}{X_1(L_1)} = 2 h_*.
\end{align*}
This is a contradiction. 
Therefore Theorem \ref{Theorem:1.2} follows.
\end{proof}
\begin{figure}[h!]
    \centering
    \includegraphics[scale=0.6]{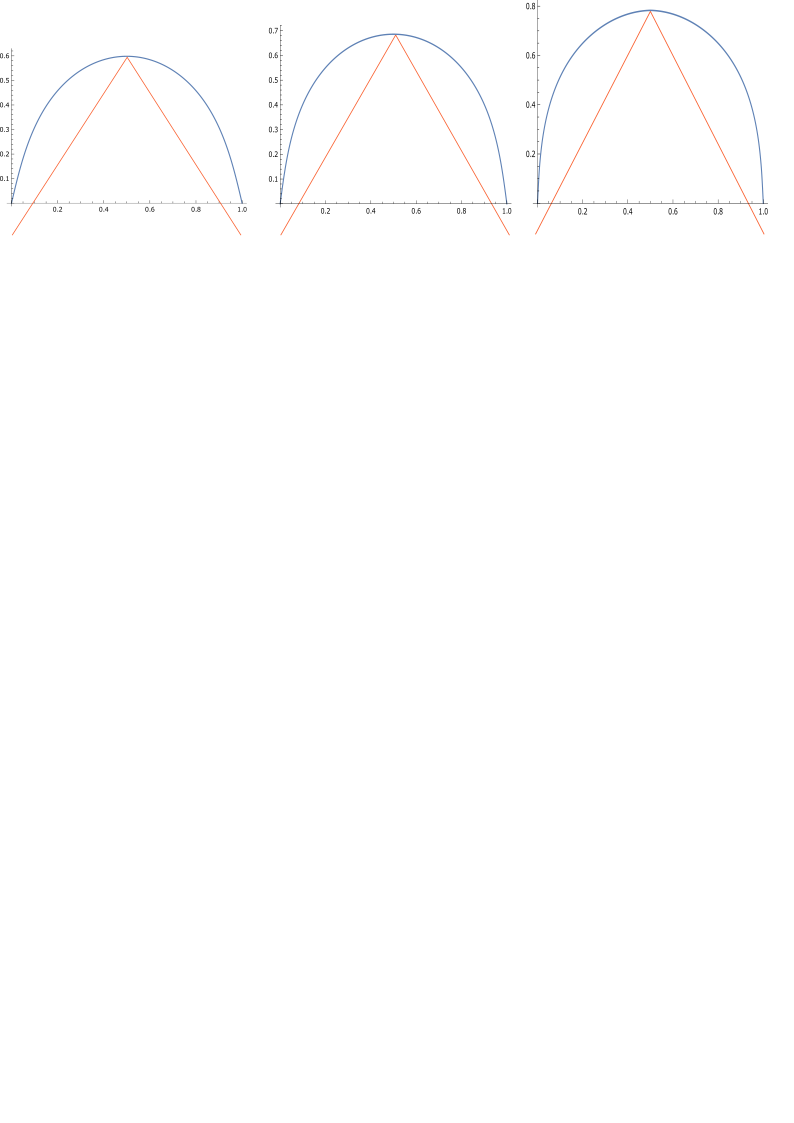}
    \vspace{-14.8cm}
    \caption{Cone obstacles and their unique minimizers, $p=2$.}
    \label{fig:2}
\end{figure}
\begin{figure}[h!]
    \centering
    \includegraphics[scale=0.6]{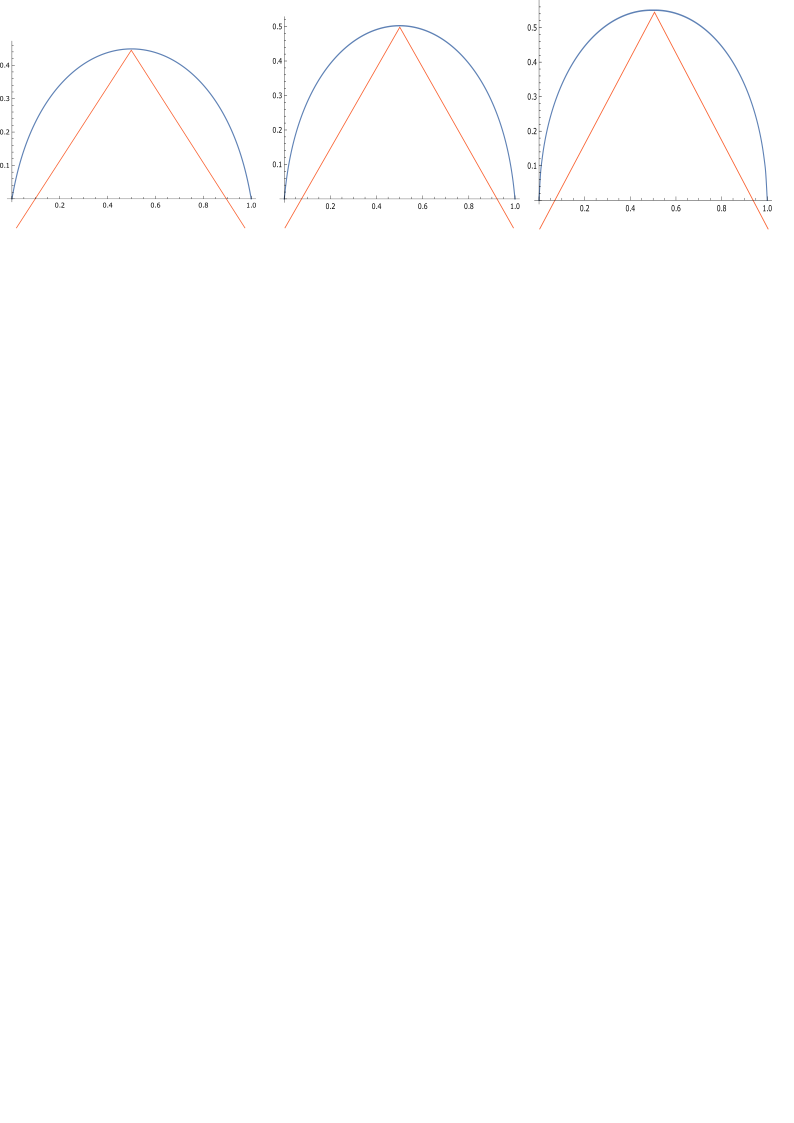}
    \vspace{-14.8cm}
    \caption{Cone obstacles and their unique minimizers, $p=5$.}
    \label{fig:3}
\end{figure}
\begin{remark}
Notice that in the case of $p =2$, \cite[Remark 2.16]{GrunauOkabe} suggests that a sharp nonexistence result in $M_{\rm sym}(\psi)$ can also be obtained for \emph{general symmetric obstacles $\psi$}, and not only for cone obstacles. It could be subject of future research to study generalizations of such result to $p \neq 2$.
\end{remark}

\appendix

\section{Technical proofs}\label{ref:app}
\subsection{Technical proofs in Section \ref{sec:nonexistence}}
\begin{proof}[Proof of Lemma \ref{lem:6.2}]
For the continuity on $(0,\infty)$ we make the substitution $z := \frac{s}{A}$ and infer that
\begin{equation}
    H(A) = A \frac{\int_0^1 \frac{z\dot{G}(Az)}{(1-z)^\frac{1}{p}}  \dz }{ \int_0^1 \frac{\dot{G}(Az)}{(1-z)^\frac{1}{p}} \dz }.
\end{equation}
Since $z \mapsto \frac{1}{(1-z)^\frac{1}{p}}$ is integrable on $(0,1)$ and $A \mapsto \dot{G}(Az)$ is uniformly bounded and depends continuously on $A$ one easily infers from the dominated convergence theorem that $H$ is continuous. For the asymptotics as $A \rightarrow 0$ one can estimate 
\begin{equation}
    0 \leq H(A) = \frac{\int_0^A \frac{s\dot{G}(s)}{(A-s)^\frac{1}{p}}  \ds }{ \int_0^A \frac{\dot{G}(s)}{(A-s)^\frac{1}{p}} \ds } \leq \frac{\int_0^A \frac{A\dot{G}(s)}{(A-s)^\frac{1}{p}}  \ds }{ \int_0^A \frac{\dot{G}(s)}{(A-s)^\frac{1}{p}} \ds } = A, 
\end{equation}
whereupon the claim follows. For the asymptotics as $A \rightarrow \infty$ we define 
\begin{equation}
    I(A) := \int_0^A \frac{\dot{G}(s)}{(A-s)^\frac{1}{p}} \ds, \quad J(A) := \int_0^A \frac{s\dot{G}(s)}{(A-s)^\frac{1}{p}}  \ds. 
\end{equation}
Let $\epsilon > 0$ be as in the statement and choose $\alpha \in (0, 1)$ such that $\frac{1}{\alpha} = 1+ \epsilon$.  Now 
\begin{equation}
  A^\frac{1}{p}  J(A) = \int_{0}^{A}  \frac{s\dot{G}(s)}{(1 - s/A)^\frac{1}{p}} \ds =   \int_{0}^{A^\alpha}  \frac{s\dot{G}(s)}{(1 - s/A)^\frac{1}{p}} \ds +  \int_{A^\alpha}^{A}  \frac{s\dot{G}(s)}{(1 - s/A)^\frac{1}{p}} \ds .
\end{equation}
If $\chi_E$ denotes the characteristic function of a set $E$ the first integral can be studied as follows
\begin{equation}
   \int_0^{A^\alpha} \frac{s\dot{G}(s)}{(1-s/A)^\frac{1}{p}} \; \mathrm{d}s =  \int_0^\infty \frac{\chi_{(0,A^\alpha)}}{(1-s/A)^\frac{1}{p}} s \dot{G}(s) \ds \rightarrow \int_0^\infty s \dot{G}(s) \; \mathrm{d}s \quad (A \rightarrow \infty),
\end{equation}
since $s \dot{G}(s)$ is integrable (as it lies in $o(s^{-1-\epsilon})$) and $\frac{\chi_{(0,A^\alpha)}}{(1- s/A)^\frac{1}{p}}$ is bounded from above by $\frac{1}{(1- A^{\alpha-1})^\frac{1}{p}} \chi_{(0,\infty)} \leq 2\chi_{(0,\infty)} $  (for large enough $A$) and converges pointwise to $\chi_{(0,\infty)}$. The second integral can be estimated by 
\begin{align}
    \int_{A^\alpha}^A \frac{s\dot{G}(s)}{(1-s/A)^\frac{1}{p}} \ds & \leq \left[ \sup_{x > A^\alpha } x \dot{G}(x) \right]  \int_{A^\alpha}^A \frac{1}{(1-s/A)^\frac{1}{p}} \ds  \\ & = \left[ \sup_{x > A^\alpha} x \dot{G}(x) \right]  A \left( 1 - A^{\alpha- 1} \right)^{1-\frac{1}{p}} \frac{p}{p-1}
    \leq \frac{p}{p-1} A \left[ \sup_{x > A^\alpha } x \dot{G}(x) \right].
\end{align}
Rewriting $A = (A^\alpha )^\frac{1}{\alpha}$ we obtain 
\begin{align}
   \int_{A^\alpha}^A \frac{s\dot{G}(s)}{(1-s/A)^\frac{1}{p}} \ds & \leq \frac{p}{p-1}  \left[ \sup_{x > A^\alpha } (A^\alpha)^\frac{1}{\alpha} x \dot{G}(x) \right] \leq \frac{p}{p-1}  \left[ \sup_{x > A^\alpha } x^\frac{1}{\alpha} x \dot{G}(x) \right] \\ & = \frac{p}{p-1} \left[ \sup_{x > A^\alpha } x^{1+\frac{1}{\alpha}} \dot{G}(x) \right] = \frac{p}{p-1} \left[ \sup_{x > A^\alpha } x^{2+\epsilon} \dot{G}(x) \right]. 
\end{align}
By \eqref{eq:asymptG} this tends to zero as $A \rightarrow \infty$. We have proved that 
\begin{equation}
 \lim_{A \rightarrow \infty}    A^\frac{1}{p}J(A) = \int_0^\infty s \dot{G}(s) \ds. 
\end{equation}
With similar techniques (and even less critical growth estimates) one readily checks that 
\begin{equation}
    \lim_{A \rightarrow \infty}    A^\frac{1}{p}I(A) = \int_0^\infty  \dot{G}(s) \ds = \frac{1}{2} c_p(G) .
\end{equation}
Now notice that 
\begin{equation}
    H(A) = \frac{I(A)}{J(A)} = \frac{A^{\frac{1}{p}} I(A)}{A^{\frac{1}{p}}  J(A)} \rightarrow 2\frac{\int_0^\infty s \dot{G}(s) \ds }{  c_p(G)}, 
\end{equation}
whereupon the claim follows. Boundedness of $H$ follows from the continuity, the explained asymptotics and the fact that $(s \mapsto s \dot{G}(s)) \in L^1(0,\infty)$.
\end{proof}

\subsection{Technical proofs in Section \ref{section:4}} \label{appendix6}
We prove \eqref{eq:sym_ineq} for general functional $\mathcal{E}$ which is defined by \eqref{def-E-intro}. 

\begin{proof}[Proof of \eqref{eq:sym_ineq}]
As stated earlier, it suffices to show \eqref{eq:sym_ineq}.
Fix $v\in M(\psi)$ arbitrarily and set 
\[
v_1(x):=  \frac{1}{2}\big( v (x) +v(1-x) \big), \quad v_2(x):=v(x) - v_1(x), \quad x \in [0,1].
\]
Notice that $v_1\in \Msym(\psi)$ and $v_2$ satisfies $v_2(1-x)=-v_2(x)$.
Then inserting $v=v_1$ into \eqref{eq:pre_sym_ineq}, we infer that 
\begin{align}
0\leq D\mathcal{E}(u)(v_1-u) = D\mathcal{E}(u)(v-u) - D\mathcal{E}(u)(v_2).
\end{align}
Therefore, it is sufficient to show that $D\mathcal{E}(u)(v_2)=0$.
By \eqref{eq:FD}, we see that
\begin{align}\label{eq:DEantisymmetric}
& \qquad D\mathcal{E}(u)(v_2) \\
=&p\int_0^1 \Big( \dot{G}(u'(x))^p|u''(x)|^{p-2} u''(x)v_2''(x) +  \dot{G}(u'(x))^{p-1} \ddot{G}(u'(x)) |u''(x)|^{p} v_2'(x) \Big)\dx. \nonumber
\end{align}
Noting that by the oddness of $G$, $u\in \Msym(\psi)$, and $v_2(1-x)=-v_2(x)$ we have
\begin{align*}
&\dot{G}(z) = \dot{G}(-z), \quad u'(x)=-u'(1-x),  \quad v_2'(x)=v_2'(1-x),  \\
&\ddot{G}(z)=-\ddot{G}(-z), \quad  u''(x)=u''(1-x), \quad  v_2''(x)=-v_2''(1-x),
\end{align*}
respectively. These observations and \eqref{eq:DEantisymmetric} immediately imply that $D\mathcal{E}(u)(v_2)=0$. The proof is complete.
\end{proof}

\begin{proof}[Proof of Lemma~\ref{p-EL}] Since $u\in W^{3, q}(0,1)$ for some $q>1$, we see that $\kappa_u \in C([0,1])$.
Hence, by density, \eqref{eq:0709-2} holds for any $\phi \in W^{2,q}_{0}(E)$.
Fix $ \varphi \in C^{\infty}_0(\mathbf{s}(x_1), \mathbf{s}(x_2))$ arbitrarily and set
\begin{align*}
\phi_1(x):= \sqrt{1+u'(x)^2} \varphi(\mathbf{s}(x)), \quad x \in E.
\end{align*}
Then we notice that $\phi_1 \in W^{2,q}_{0}(E)$ since 
\begin{align*}
\phi_1'(x) &=\bigl( 1+u'(x)^2 \bigr)  \varphi'(\mathbf{s}(x)) + \frac{u'(x) u''(x) }{ \sqrt{1+u'(x)^2}} \varphi(\mathbf{s}(x)), \\
\phi_1''(x)&=  \bigl( 1+u'(x)^2 \bigr)^{\frac{3}{2}} \varphi''(\mathbf{s}(x))
+ 3 u'(x) u''(x) \varphi'(\mathbf{s}(x)) \\  
&\quad+\Bigl( \frac{u'(x) u'''(x) }{ \sqrt{1+u'(x)^2}} + \frac{u''(x)^2 }{(1+u'(x)^2 )^{\frac{3}{2} }}\Bigr) \varphi(\mathbf{s}(x)).
\end{align*}
Taking $\phi=\phi_1$ in \eqref{eq:0709-2} we obtain 
\begin{equation}
\label{eq:211015}
\begin{aligned}
0= &\int^{x_2}_{x_1}  \Bigl[p |\kappa_u|^{p-2} \kappa_u 
\bigl[ \varphi''(\mathbf{s}(x)) + 3 \kappa_u u'  \varphi'(\mathbf{s}(x)) + \kappa_u^2 \varphi(\mathbf{s}(x)) \bigr] \\
& \qquad \qquad \,\,\, + (1-3p) |\kappa_u|^{p} \bigl[ u' \varphi'(\mathbf{s}(x)) + \kappa_u (u')^2 \varphi(\mathbf{s}(x)) \bigr] \Bigr]\sqrt{1+(u')^2} \dx \\
& + \int^{x_2}_{x_1}  p \dfrac{|\kappa_u|^{p-2} \kappa_u u' u'''}{\bigl( 1 + (u')^2 \bigr)^{\frac{3}{2}}}\varphi(\mathbf{s}(x)) \dx \\
=& \int^{x_2}_{x_1} \Bigl[p |\kappa_u|^{p-2} \kappa_u \varphi''(\mathbf{s}(x)) + p |\kappa_u|^{p} \kappa_u \varphi(\mathbf{s}(x)) \\
& \qquad + |\kappa_u|^p u' \varphi'(\mathbf{s}(x)) + (1-3p) |\kappa_u|^{p} \kappa_u (u')^2 \varphi(\mathbf{s}(x)) \Bigr]\sqrt{1+(u')^2} \dx \\
& + \int^{x_2}_{x_1}  p \dfrac{|\kappa_u|^{p-2} \kappa_u u' u'''}{\bigl( 1 + (u')^2 \bigr)^{\frac{3}{2}}}\varphi(\mathbf{s}(x)) \dx, 
\end{aligned}
\end{equation}
where we used $\kappa_u=u''/(1+(u')^2)^{3/2}$. 
Since $u \in W^{3,q}(0,1)$, it follows that $\kappa_u \in W^{1,q}(0,1)$ and
\[ 
u''' = \bigl(1+(u')^2 \bigr)^{\frac{3}{2}} \kappa_u' +3 u' \bigl(1+(u')^2 \bigr)^2 \kappa_u^2.  
\]
From this we deduce that
\begin{align*}
& \int^{x_2}_{x_1}  p \dfrac{|\kappa_u|^{p-2} \kappa_u u' u'''}{\bigl( 1 + (u')^2 \bigr)^{\frac{3}{2}}} \varphi(\mathbf{s}(x)) \dx \\
=&\int^{x_2}_{x_1} p \Bigl[ |\kappa_u|^{p-2} \kappa_u \kappa_u' u'+ 3|\kappa_u|^{p}\kappa_u (u')^2 \sqrt{1+(u')^2} \Bigr] \varphi(\mathbf{s}(x)) \dx,
\end{align*}
and then \eqref{eq:211015} is reduced into 
\begin{equation}
\label{eq:211015-2}
\begin{aligned}
0=& \int^{x_2}_{x_1} \Bigl[p |\kappa_u|^{p-2} \kappa_u \varphi''(\mathbf{s}(x)) + p |\kappa_u|^{p} \kappa_u \varphi(\mathbf{s}(x)) \\
& \qquad + |\kappa_u|^p u' \varphi'(\mathbf{s}(x)) + |\kappa_u|^{p} \kappa_u (u')^2 \varphi(\mathbf{s}(x)) \Bigr]\sqrt{1+(u')^2} \dx \\
& + \int^{x_2}_{x_1} p |\kappa_u|^{p-2} \kappa_u \kappa_u' u' \varphi(\mathbf{s}(x)) \dx. 
\end{aligned}
\end{equation}
Integrating by part we have 
\begin{align*}
&\int^{x_2}_{x_1} p |\kappa_u|^{p-2} \kappa_u \kappa_u' u' \varphi(\mathbf{s}(x)) \dx 
 = \int^{x_2}_{x_1} (|\kappa_u|^{p})' u' \varphi(\mathbf{s}(x)) \dx \\
&= - \int^{x_2}_{x_1} |\kappa_u|^{p} \bigl[  u'' \varphi(\mathbf{s}(x)) + u' \varphi'(\mathbf{s}(x)) \sqrt{1+(u')^2} \bigr] \dx \\
&= - \int^{x_2}_{x_1} |\kappa_u|^{p} \bigl[ \kappa_u (1+ (u')^2) \varphi(\mathbf{s}(x)) + u' \varphi'(\mathbf{s}(x)) \bigr] \sqrt{1+(u')^2} \dx.   
\end{align*}
This together with \eqref{eq:211015-2} implies that 
\begin{equation}
\label{eq:211015-3}
\begin{aligned}
\int^{x_2}_{x_1} \Bigl[p |\kappa_u|^{p-2} \kappa_u \varphi''(\mathbf{s}(x)) + (p-1) |\kappa_u|^{p} \kappa_u \varphi(\mathbf{s}(x)) \Bigr]\sqrt{1+(u')^2} \dx = 0. 
\end{aligned}
\end{equation}
Finally, changing the variable $x=x(\mathbf{s})$ in \eqref{eq:211015-3}, we obtain \eqref{eq:0710-4}. 
\end{proof}

\begin{proof}[Proof of Lemma~\ref{lem:regularity-omega}]
First, note that \eqref{eq:0606-2} holds for any $\varphi \in W^{2,2}_0(0,L)$ by the density argument and $\omega \in C([0,L])$.
Fix $\eta \in C^{\infty}_0(0,L)$ arbitrarily and
set
\[
\varphi_1(s) := \int_0^s \eta(t)\; \mathrm{d}t + \Bigl( \int_0^L \eta(s)\; \mathrm{d}s \Bigr) \Bigl(-\frac{3}{L^2}s^2 + \frac{2}{L^3}s^3 \Bigr), \quad s\in [0,L].
\]
Then $\varphi_1$ satisfies $\varphi_1 \in W^{2,2}_0(0,L)$ and $\|\varphi_1 \|_{C^0} \leq C \| \eta \|_{L^1}$, 
$\|\varphi'_1 \|_{L^1} \leq C \| \eta \|_{L^1}$ for some $C>0$. 
By taking $\varphi_1$ as $\varphi$ in \eqref{eq:0606-2}, we have 
\begin{align*}
\Bigl| p\int_0^L \omega \eta' \; \mathrm{d}s \Bigr|
&\leq p\| \omega\|_{L^{\infty}}\| \eta \|_{L^1} \int_0^L \Big| -\frac{6}{L^2}+\frac{12}{L^3}s \Big| \, \mathrm{d}s 
+(p-1)\| \omega \|_{L^{\infty}}^{\frac{p+1}{p-1}} \int_0^L | \varphi| \, \mathrm{d}s \\
& \leq C \| \eta \|_{L^1},
\end{align*}
from which we can deduce that $\omega \in W^{1, \infty}(0,L) $, and 
then it follows from \eqref{eq:0606-2} that
\begin{align*}
\Bigl| p\int_0^L  \omega' \varphi' \, \mathrm{d}s \Bigr|
&\leq (p-1)\| \omega \|_{L^{\infty}}^{\frac{p+1}{p-1}} \int_0^L | \varphi| \, \mathrm{d}s
\leq C \| \varphi \|_{L^1}
\end{align*}
for any $ \varphi \in C^{\infty}_0(0,L)$.
Therefore, we see that $\omega\in W^{2, \infty}(0,L)$ and 
\[
\omega''(s) = -\frac{(p-1)}{p}|\omega(s)|^{\frac{2}{p-1}}\omega(s)  \quad \text{a.e. } s\in(0,L).
\]
The function on the right hand side belongs to $C([0,L])$ since so does $\omega$, 
and hence we obtain $\omega \in C^2([0,L])$.
In addition, using the integration by parts in \eqref{eq:0606-2}, we obtain \eqref{eq:w-equation}.
\end{proof}

\begin{proof}[Proof of \eqref{eq:0718-4}]
From \eqref{eq:0712-3} and $\sin_{2, 2p'}(\cdot) = \sin_{2, 2p'}(\pi_{2,2p'}-\cdot)$, it follows that for $s\in [0,L_{\lambda}]$
\begin{align*}
Y_{\lambda}(2L_{\lambda} -s) &= (p')^{\frac{1}{p'}} \lambda^{-\frac{1}{p}}\sin_{2,2p'}\bigl(\lambda^{\frac{1}{p}} (p')^{-1}(2L_{\lambda}-s) \bigr) \\
&= (p')^{\frac{1}{p'}} \lambda^{-\frac{1}{p}}\sin_{2,2p'}\bigl(\pi_{2,2p'}-\lambda^{\frac{1}{p}} (p')^{-1}s \bigr) \\
&=(p')^{\frac{1}{p'}} \lambda^{-\frac{1}{p}}\sin_{2,2p'}\bigl(\lambda^{\frac{1}{p}} (p')^{-1}s \bigr)
= Y_{\lambda}(s). 
\end{align*}
On the other hand, using \eqref{eq:0712-2}, $\sin_{2, 2p'}(\cdot) = \sin_{2, 2p'}(\pi_{2,2p'}-\cdot)$, and the change of variables $t=2L_{\lambda}-\tau$,  we have
\begin{align*}
X_{\lambda}(2L_{\lambda} -s) 
&=\int_0^{2L_{\lambda} -s} \sin_{2,2p'}^{p'} \bigl( (p')^{-\frac{1}{p'}}\lambda^{\frac{1}{p}} t \bigr)\dt \\
&= \int_{L_{\lambda}}^{s} \sin_{2,2p'}^{p'} \bigl(\pi_{2,2p'}- (p')^{-\frac{1}{p'}}\lambda^{\frac{1}{p}} \tau)\bigr) (-1) \,\mathrm{d}\tau \\ 
&= - \Bigl(
\int_{0}^{s} \sin_{2,2p'}^{p'} \bigl((p')^{-\frac{1}{p'}}\lambda^{\frac{1}{p}} \tau\bigr)\,\mathrm{d}\tau
-\int_{0}^{2L_{\lambda}} \sin_{2,2p'}^{p'} \bigl((p')^{-\frac{1}{p'}}\lambda^{\frac{1}{p}} \tau\bigr)\,\mathrm{d}\tau
\Bigr) \\ 
& = -X_{\lambda}(s) + X_{\lambda}(2L_{\lambda}) 
\end{align*}
for any $s\in[0,L_{\lambda}]$.
\end{proof}

\end{document}